\title{\textbf{ Optimal Gamma Approximation on Wiener Space}}
\author{
	E. Azmoodeh\thanks{Ruhr University Bochum, Faculty of Mathematics, IB 2/101, 44780 Bochum, Germany. E-mail: ehsan.azmoodeh@rub.de},
	P. Eichelsbacher\thanks{Ruhr University Bochum, Faculty of Mathematics, IB 2/115, 44780 Bochum, Germany. E-mail: peter.eichelsbacher@rub.de}
	and L. Knichel\thanks{Ruhr University Bochum, Faculty of Mathematics, IB 2/95, 44780 Bochum, Germany. E-mail: lukas.knichel@rub.de.
		Lukas Knichel has been supported by the German Research Foundation (DFG) via Research Training Group RTG 2131 \textit{High dimensional phenomena in probability -- fluctuations and discontinuity}}
}
\date{\today}
\theoremstyle{plain}
\newtheorem{Thm}{Theorem}[section]
\newtheorem{thm}[Thm]{Theorem}
\newtheorem{Lem}[Thm]{Lemma}
\newtheorem{lem}[Thm]{Lemma}
\newtheorem{Prop}[Thm]{Proposition}
\newtheorem{prop}[Thm]{Proposition}
\theoremstyle{definition}
\newtheorem{rem}[Thm]{Remark}
\newtheorem{ex}[Thm]{Example}
\def\E{\mathbb{E}}
\def\R{\mathbb{R}}
\def\N{\mathbb{N}}
\newcommand{\HH}{\mathfrak{H}}
\newcommand{\cont}[1]{\mathbin{\otimes_{#1}}}
\newcommand{\contIterated}[2]{\mathbin{\otimes_{#1}^{(#2)}}}
\newcommand{\scont}[1]{\mathbin{\widetilde{\otimes}_{#1}}}
\newcommand{\tensor}{\mathbin{\otimes}}
\newcommand{\symtensor}{\mathbin{\widetilde{\otimes}}}
\DeclareMathOperator{\Var}{Var}
\DeclareMathOperator{\Ker}{Ker}
\DeclareMathOperator{\Tr}{Tr}
\DeclareMathOperator{\CenteredGamma}{\overline{\Gamma}}
\DeclareMathOperator{\confhyper}{{}_{1}F _{1}}
\DeclarePairedDelimiter\sprod{\langle}{\rangle}
\DeclarePairedDelimiter\abs{\lvert}{\rvert}
\DeclarePairedDelimiter\norm{\lVert}{\rVert}
\DeclarePairedDelimiter\ceil{\lceil}{\rceil}
\newcommand{\ind}[1]{\mathds{1}_{\{ #1 \}}}
\let\temp\epsilon \let\epsilon\varepsilon \let\varepsilon\temp
\def\geq{\geqslant}
\def\leq{\leqslant}
\date{  }
\begin{document}
	\maketitle
	\begin{abstract}
		In \cite{n-p-noncentral}, Nourdin and Peccati established a neat characterization of Gamma approximation on a fixed Wiener chaos in terms of convergence of only the third and fourth cumulants.  In this paper, we provide an optimal rate of convergence in the $d_2$-distance in terms of the maximum of the third and fourth cumulants analogous to the result for normal approximation in \cite{n-p-optimal}. In order to achieve our goal, we introduce a novel operator theory approach to Stein's method. The recent development in Stein's method for the Gamma distribution of D\"obler and Peccati (\cite{d-p}) plays a pivotal role in our analysis. Several examples in the context of quadratic forms are considered to illustrate our optimal bound.  
	\end{abstract}

	\vskip0.3cm
	\noindent \textbf{Keywords}:
	Gamma approximation, Wiener chaos, Cumulants/Moments, Weak convergence, Malliavin Calculus, Berry--Esseen bounds, Stein's method, Wasserstein distances, Quadratic form\\
	\noindent 
	\textbf{MSC 2010}: 60F05, 60G50, 60H07
	
		\tableofcontents

\section{Introduction and Main Result}
Let $X=\{X(h) : h \in \HH \}$ be an isonormal Gaussian process over a separable Hilbert space $\HH$ on a suitable probability space $(\Omega, \mathscr{F}, P)$. In the landmark article \cite{FmtOriginalReference} Nualart and Peccati discovered an astonishing central limit theorem (CLT) known nowadays as the \textit{fourth moment theorem} for a sequence of normalized random variables  inside a fixed Wiener chaos associated to $X$. It states that the convergence in distribution towards a standard Gaussian distribution is equivalent to the sole requirement that the fourth moments converge to $3$. A few years later, their findings have created a fertile line of research, culminating in the popular article \cite{StMethOnWienChaos}, introducing the so called \textit{Malliavin-Stein} approach, 
an elegant combination of two probabilistic techniques namely Stein method \cite{stein,ChenGoldsteinShao2011} and Malliavin calculus \cite{GelbesBuch,Nua-Nua} in order to quantify the probability distance between a square integrable
Wiener functional and a normal distribution. The reader may consult the excellent monograph \cite{n-pe-1}, as well as the constantly updated web resource \texttt{\url{ https://sites.google.com/site/malliavinstein/home}} for a huge amount of applications and generalizations of the aforementioned results. Our study is mainly inspired by the following discovery (item (b) of the forthcoming theorem), which presents an optimal version of the fourth moment theorem. For every real-valued random variable $F$ the quantity $\kappa_r (F)$ stands for the $r$th cumulant of $F$, see section \ref{sec:cumulant}.
\begin{thm}[\textbf{(Optimal) fourth moment theorem} \cite{FmtOriginalReference,StMethOnWienChaos,n-p-optimal}] \label{thm:optimal-normal}
Fix $q \ge 2$. Let $\{ F_n : n\ge 1\}$ be a sequence of random variables in the $q$th Wiener chaos associated to $X$ such that $\E[F^2_n]=1$ for every $n \in \N$. Then
\begin{description}
	\item[(a)] $F_n \to N \sim \mathscr{N}(0,1)$ in distribution if and only if $\E[F^4_n] \to 3$. Also, the following quantitative estimate is in order: for $n\ge 1$,
	\begin{equation}\label{eq:4MT-quantitative}
	d_{TV} (F_n, N) \le 2 \sqrt{\frac{q-1}{3q}} \, \sqrt{\vert \kappa_4 (F_n) \vert}.
	\end{equation} 
	\item[(b)] Under the assumptions of item (a) there exist two constants $C_1$ and $C_2$ (independent of $n$) such that the following optimal rate of convergence in total variation distance holds:
\[ C_1 \,  \max\{ \abs{\kappa_3(F_n)}, \abs{\kappa_4(F_n)}  \} \leq d_{TV}(F_n,N) \leq  C_2 \, \max\{ \abs{\kappa_3(F_n)}, \abs{\kappa_4(F_n)}  \}.\]
\end{description}
\end{thm}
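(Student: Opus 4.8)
The plan is to derive (a) from the classical Malliavin--Stein inequality and to concentrate the work on the optimal two-sided estimate (b).

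\emph{Part (a) and a reduction.} For any $F = I_q(f)$ with $\E[F^2]=1$ the Malliavin--Stein inequality reads $d_{TV}(F,N)\leq 2\sqrt{\Var(\Gamma_1(F))}$, where $\Gamma_1(F):=\sprod{DF,-DL^{-1}F}_\HH=q^{-1}\norm{DF}_\HH^2$ and $\E[\Gamma_1(F)]=1$; the product formula writes both $\Var(\Gamma_1(F))$ and $\kappa_4(F)$ as nonnegative linear combinations of the squared contraction norms $\norm{f\cont r f}^2$, $1\leq r\leq q-1$, which yields $\Var(\Gamma_1(F))\leq\tfrac{q-1}{3q}\kappa_4(F)$ and hence \eqref{eq:4MT-quantitative}; in particular $\kappa_4(F)\geq 0$ on a fixed chaos, so $\kappa_4(F_n)\to 0$ is equivalent to $F_n\to N$. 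For (b), besides $\kappa_4(F_n)\geq 0$ I would use the identity $\kappa_3(F)=2\,\Cov(F,\Gamma_1(F))$, which together with Cauchy--Schwarz and the variance bound above gives $\abs{\kappa_3(F)}\leq 2\sqrt{\Var(\Gamma_1(F))}\leq 2\sqrt{\tfrac{q-1}{3q}}\,\sqrt{\kappa_4(F)}$. Thus $\max\{\abs{\kappa_3(F_n)},\abs{\kappa_4(F_n)}\}$ is comparable, with $n$-free constants, to $\abs{\kappa_3(F_n)}+\kappa_4(F_n)$, and it suffices to prove $c_1(\abs{\kappa_3(F_n)}+\kappa_4(F_n))\leq d_{TV}(F_n,N)\leq c_2(\abs{\kappa_3(F_n)}+\kappa_4(F_n))$.

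\emph{The second-order Malliavin--Stein expansion.} I would run this first for a smooth probability distance (test functions with, say, three bounded derivatives) and transfer to $d_{TV}$ afterwards, using that $F_n$, being a non-zero element of a fixed chaos, has a density. For such a test function $h$ let $f_h$ solve $f'(x)-xf(x)=h(x)-\E[h(N)]$, whose first three derivatives are then bounded. With $\Gamma_0(F):=F$ and $\Gamma_{s+1}(F):=\sprod{DF,-DL^{-1}\Gamma_s(F)}_\HH$, the Nourdin--Peccati cumulant formulas give $\E[\Gamma_2(F)]=\tfrac12\kappa_3(F)$ and $\E[\Gamma_3(F)]=\tfrac16\kappa_4(F)$. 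Starting from $\E[h(F_n)]-\E[h(N)]=-\E[f_h'(F_n)(\Gamma_1(F_n)-1)]$, applying the integration-by-parts identity $\E[f'(F)\Phi]=\E[f''(F)\sprod{DF,-DL^{-1}\Phi}_\HH]$ (valid for centered $\Phi$) twice, and each time splitting $\Gamma_s(F_n)$ into its mean and its centered part $\widetilde{\Gamma}_s(F_n)$, gives
\[
\E[h(F_n)]-\E[h(N)]=-\tfrac12\kappa_3(F_n)\,\E[f_h''(F_n)]-\tfrac16\kappa_4(F_n)\,\E[f_h'''(F_n)]-\E\big[f_h'''(F_n)\,\widetilde{\Gamma}_3(F_n)\big].
\]
By weak convergence and hypercontractive uniform integrability, $\E[f_h''(F_n)]\to\E[f_h''(N)]$ and $\E[f_h'''(F_n)]\to\E[f_h'''(N)]$; thus the first two terms form a linear functional of $(\kappa_3(F_n),\kappa_4(F_n))$ whose coefficients converge and can be made non-degenerate by suitable choices of $h$.

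\emph{The key estimate and the upper bound.} The technical heart, which I expect to be the main obstacle, is the second-order variance estimate $\Var(\Gamma_2(F_n))\leq C_q\big(\kappa_3(F_n)^2+\kappa_4(F_n)^2\big)$. I would prove it by expanding $\Gamma_2(F_n)$ through the multiplication formula into a finite combination of multiple integrals indexed by iterated contractions of $f_n$, and bounding the arising norms --- of the type $\norm{(f_n\cont r f_n)\cont s(f_n\cont r f_n)}$ and mixed variants --- by products of the first-order quantities $\norm{f_n\cont r f_n}$ (already dominated via (a) by $\kappa_4(F_n)$) and of the specific combinations governing $\kappa_3(F_n)$; this is the genuinely new, delicate combinatorial--analytic step, extending the first-order contraction calculus underlying (a). Granting it, the $L^2$-boundedness on a fixed chaos of the operator $\Phi\mapsto\sprod{DF_n,-DL^{-1}\Phi}_\HH$ yields $\norm{\widetilde{\Gamma}_3(F_n)}_{L^2}\leq C_q\sqrt{\Var(\Gamma_2(F_n))}\leq C_q'(\abs{\kappa_3(F_n)}+\kappa_4(F_n))$, so the remainder in the displayed expansion is $O(\abs{\kappa_3(F_n)}+\kappa_4(F_n))$, as are the first two terms; taking the supremum over $h$ gives the upper bound. (Already the one-step version $\E[h(F_n)]-\E[h(N)]=-\tfrac12\kappa_3(F_n)\E[f_h''(F_n)]-\E[f_h''(F_n)\widetilde{\Gamma}_2(F_n)]$ suffices here, the $\kappa_4$ contribution entering through $\norm{\widetilde{\Gamma}_2(F_n)}_{L^2}$.)

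\emph{The lower bound.} I would run the expansion in reverse: since $d_{TV}(F_n,N)\geq\abs{\E[h(F_n)]-\E[h(N)]}$ for every $\norm{h}_\infty\leq 1$, it is enough to exhibit, for each $n$, a smooth $h_n$ with $\norm{h_n}_\infty\leq 1$ for which the leading part $-\tfrac12\kappa_3(F_n)\E[f_{h_n}''(F_n)]-\tfrac16\kappa_4(F_n)\E[f_{h_n}'''(F_n)]$ has modulus at least $c_1\max\{\abs{\kappa_3(F_n)},\kappa_4(F_n)\}$ while the remainder is of strictly smaller order. Choosing $h_n\in\{\pm\phi_1,\pm\phi_2\}$ among two fixed test functions whose coefficient vectors $(\E[\phi_i''(N)],\E[\phi_i'''(N)])$, $i=1,2$, are linearly independent, and exploiting the sign freedom, forces the leading part to be $\gtrsim\abs{\kappa_3(F_n)}+\kappa_4(F_n)$. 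The remaining point --- the second obstacle --- is that the residual term $\E[f_{h_n}'''(F_n)\widetilde{\Gamma}_3(F_n)]$ (and any further remainder one retains) must be shown to be $o\big(\max\{\abs{\kappa_3(F_n)},\kappa_4(F_n)\}\big)$; the crude $L^2$-bound above only gives $O$ of that order, so one has to genuinely use the near-Gaussianity of $F_n$ --- for instance through one more integration by parts, which brings in the higher cumulants $\kappa_r(F_n)$, $r\geq 5$, satisfying $\abs{\kappa_r(F_n)}=o(\kappa_4(F_n))$ on a fixed chaos, together with a covariance estimate absorbing the truly residual part. Combining the two bounds, and rewriting $\abs{\kappa_3}+\kappa_4$ as $\max\{\abs{\kappa_3},\abs{\kappa_4}\}$ up to constants, gives (b).
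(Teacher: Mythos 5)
First, a point of reference: the paper does not prove Theorem \ref{thm:optimal-normal} at all — it is quoted as background with citations to \cite{FmtOriginalReference,StMethOnWienChaos,n-p-optimal} — so your proposal has to be judged against the actual arguments in those references rather than against anything in this paper. Your treatment of part (a) is the standard and correct one (Malliavin--Stein bound plus the contraction identity giving $\Var(\Gamma_1(F))\le\tfrac{q-1}{3q}\kappa_4(F)$), and your reduction $\abs{\kappa_3(F)}\le 2\sqrt{\Var(\Gamma_1(F))}$ is also fine. The overall architecture you propose for (b) — the Edgeworth-type expansion through iterated Gamma operators with main terms $\tfrac12\kappa_3\E[f_h'']+\tfrac16\kappa_4\E[f_h''']$ and a remainder driven by a centered higher Gamma operator — is indeed the skeleton of the proof in \cite{n-p-optimal}.

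The genuine gap is your "key estimate" $\Var(\Gamma_2(F_n))\le C_q\big(\kappa_3(F_n)^2+\kappa_4(F_n)^2\big)$, which is false, already in the second Wiener chaos where everything is explicit. Take $F=\sum_i c_i(N_i^2-1)$ with eigenvalues occurring in pairs $\pm c$ (so $\kappa_3(F)=0$): one pair $\pm a$ with $a=(8m)^{-1/4}$ and $m$ pairs $\pm b$ with $b\asymp m^{-1/2}$, normalized so that $\sum_i c_i^2=\tfrac12$. Then $\kappa_4(F)\asymp\sum_i c_i^4\asymp m^{-1}\to 0$, while $\Var(\Gamma_2(F))=32\sum_i c_i^6\asymp m^{-3/2}$, so that $\Var(\Gamma_2(F))/\big(\kappa_3(F)^2+\kappa_4(F)^2\big)\asymp m^{1/2}\to\infty$. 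The sharp worst-case order is $\Var(\Gamma_2(F))\asymp\kappa_4(F)^{3/2}$, whence $\norm{\CenteredGamma_2(F)}_{L^2}$ can be of order $\kappa_4(F)^{3/4}\gg\kappa_4(F)$; in particular your parenthetical claim that "the one-step version suffices, the $\kappa_4$ contribution entering through $\norm{\CenteredGamma_2(F_n)}_{L^2}$" cannot work, and deducing $\norm{\CenteredGamma_3(F_n)}_{L^2}\le C(\abs{\kappa_3}+\kappa_4)$ from a bound on $\Var(\Gamma_2)$ collapses with it. The actual proof in \cite{n-p-optimal} must (and does) go one integration by parts further and control $\E\abs{\CenteredGamma_3(F)}$ directly, via contraction inequalities of the type $\norm{(f\cont{a}f)\cont{b}(f\cont{c}f)}\lesssim\norm{f\cont{a}f}\,\norm{f\cont{c}f}$, i.e. each term must be dominated by a \emph{product of two} first-order contraction norms rather than by one such norm times $\norm{f}^2$; this is exactly the combinatorial step you defer, and the route you sketch for it would not deliver it. For the lower bound you correctly identify the second obstacle (the crude $L^2$ bound only gives $O(\max)$, not $o(\max)$), but the proposal leaves it unresolved; note also that this is why the constants $C_1,C_2$ in \cite{n-p-optimal} depend on the sequence (the statement is asymptotic, requiring $F_n\to N$), in contrast with the genuinely non-asymptotic second-chaos Gamma result that is the actual contribution of the present paper.
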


Fix a parameter $\nu>0$. In this paper, the target distribution of interest is the so called \textit{centered Gamma} distribution denoted by $G(\nu) \sim CenteredGamma(\nu)$. This means that $G(\nu) = 2 \, \widehat{G}(\nu /2) - \nu$, where $\widehat{G}(\nu /2)$ is a standard Gamma random variable with density $\widehat{g}(x) = x^{\frac{\nu}{2} -1} \, e^{-x} \, \Gamma(\frac{\nu}{2})^{-1} \, \mathds{1}_{(0, \infty)}(x)$. Here $\Gamma(\nu):= \int_{0}^{+\infty} x^{\nu-1} e^{-x} dx$ denotes the \textit{Euler Gamma function}. The centered Gamma distribution frequently appears as a natural limiting distribution in the context of the fourth moment theorems in several studies, see for example \cite{a-c-p,a-m-m-p,a-p-p,Ciprian-1,Ciprian-2,A-S-Gamma,VarianceGammaPaper,ledoux4MT,n-r,n-poly-2wiener,a-m-p-s, EdenViquez2015}. Our principal goal is to provide an optimal rate (analogous to that of item (b) Theorem \ref{thm:optimal-normal}) for the Gamma approximation on a fixed Wiener chaos. The statement of the next result is an up-to-date significant improvement over the years of the findings in \cite{n-p-noncentral,StMethOnWienChaos,InvPrinForHomSums,d-p}. 
 \begin{thm}\label{thm:Gamma-approximation-Int}
 Let $\nu >0$. Fix $q \ge 2$ an even number (see \cite[Remark~1.3,~item 3]{n-p-noncentral} when $q$ is odd). Assume $F=I_q(f)$ is a random element in the $q$th Wiener chaos such that $\E[F^2]=2\nu$. Then there exists a constant $C_{\nu,q}$ (may depend on $\nu$ and $q$) such that 
 \begin{equation}\label{eq:Gamma-Rate-Int}
 \begin{split}
 d_1 (F,G(\nu)) &\le C_{\nu,q} \, \sqrt{  \Big \vert \left(\kappa_4 (F) - \kappa_4 (G(\nu)) \right) - 12 \left(   \kappa_3(F) - \kappa_3 (G(\nu) \right) \Big \vert  }
  \le C'_{\nu,q} \sqrt{ \mathbf{M}(F)}
 \end{split}
 \end{equation}	
 where
 \begin{equation}\label{eq:M}
 \mathbf{M}(F):= \max \Big \{  \Big \vert  \kappa_4 (F) - \kappa_4 (G(\nu))  \Big \vert ,  \Big \vert  \kappa_3 (F) - \kappa_3 (G(\nu))  \Big \vert \Big \}.
 \end{equation}
 Here $d_1$ stands for the so called $1$-Wasserstein metric (see below for definition). As a consequence, for a sequence $\{F_n: n\ge 1\}$ of random variables in the $q$th Wiener chaos such that $\E[F^2_n]=2\nu$ for every $n \in \N$, the following remarkable equivalence of asymptotic statements are in order:
 \begin{description}
 	\item[(a)] $F_n \rightarrow G(\nu)$ in distribution. 
 	\item[(b)] $\kappa_3(F_n) \to 8\nu$, and $\kappa_4 (F_n) \to 48 \nu$.
 \end{description}
 \end{thm}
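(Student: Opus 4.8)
The plan is to run a Malliavin–Stein argument tailored to the centered Gamma target, using the Stein operator from Döbler–Peccati \cite{d-p}. First I would recall that $G(\nu)$ is characterized by the Stein equation $\mathcal{A}_\nu \varphi(x) := 2(x+\nu)\varphi'(x) - x\varphi(x)$, in the sense that $\E[\mathcal{A}_\nu \varphi(F)] = 0$ for all nice $\varphi$ iff $F \sim G(\nu)$; and that the solution $\varphi_h$ to $\mathcal{A}_\nu \varphi_h = h - \E[h(G(\nu))]$ has derivatives bounded in terms of $\|h'\|_\infty$, so that controlling $d_1(F,G(\nu))$ reduces to bounding $|\E[2(F+\nu)\varphi'(F) - F\varphi(F)]|$ uniformly over $1$-Lipschitz $h$. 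For $F = I_q(f)$ with $\E[F^2] = 2\nu$, the integration-by-parts formula of Malliavin calculus gives $\E[F\varphi(F)] = \E[\varphi'(F)\langle DF, -DL^{-1}F\rangle_{\HH}]$, where $L$ is the Ornstein–Uhlenbeck generator and $-DL^{-1}F = q^{-1}DF$ on the $q$th chaos. Hence the Stein discrepancy becomes
\[
\E[\mathcal{A}_\nu \varphi(F)] = \E\Big[\varphi'(F)\big(2F + 2\nu - \tfrac{1}{q}\|DF\|_{\HH}^2\big)\Big],
\]
so that, by Cauchy–Schwarz and the bound on $\|\varphi'\|_\infty$,
\[
d_1(F,G(\nu)) \le C\,\sqrt{\Var\!\Big(\tfrac{1}{q}\|DF\|_{\HH}^2 - 2F\Big)}
= C\,\sqrt{\E\Big[\big(\tfrac{1}{q}\|DF\|_{\HH}^2 - 2F - 2\nu\big)^2\Big]}.
\]

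The heart of the matter, and what I expect to be the main obstacle, is to show that this variance-type quantity is bounded by the linear combination of cumulant differences $\big|(\kappa_4(F) - \kappa_4(G(\nu))) - 12(\kappa_3(F) - \kappa_3(G(\nu)))\big|$, \emph{without extraneous lower-order chaos contributions} — i.e. the square root of an explicit cumulant expression, not merely $O(\sqrt{\kappa_3^2 + \text{(something)}})$. This is precisely where the "novel operator theory approach" advertised in the abstract should enter: one expands $\|DF\|_{\HH}^2$ in Wiener chaoses via the product formula, so that $\E\big[(\tfrac1q\|DF\|_{\HH}^2 - 2F - 2\nu)^2\big]$ is a sum of squared norms of contraction kernels $\|f \cont{r} f\|$ (and their symmetrizations) over $r = 1,\dots,q-1$; the task is to re-express this whole sum as a single cumulant combination. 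Here I would invoke (or re-derive) the chaos-decomposition formulas for $\kappa_3(F)$ and $\kappa_4(F)$ in terms of these same contraction norms — $\kappa_3(F)$ is a multiple of $\langle f, f\cont{q/2}f\rangle$ and $\kappa_4(F)$ is a positive combination of $\|f\scont{r}f\|^2$ — and show that the specific combination $\kappa_4 - 12\kappa_3$ (shifted by the Gamma cumulants) dominates $\E\big[(\tfrac1q\|DF\|_{\HH}^2 - 2F - 2\nu)^2\big]$ up to a constant depending only on $\nu,q$. The evenness of $q$ is used so that the "critical" contraction $f\cont{q/2}f$ lives in the $q$th chaos and can resonate with $F$ itself; this is the role of the hypothesis $\E[F^2] = 2\nu$ together with the exact value $\kappa_3(G(\nu)) = 8\nu$.

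The second inequality in \eqref{eq:Gamma-Rate-Int}, namely $|(\kappa_4(F)-\kappa_4(G)) - 12(\kappa_3(F)-\kappa_3(G))| \le C\,\mathbf{M}(F)$, is then immediate from the triangle inequality with $C = 13$ (or any constant $\ge 13$). For the asymptotic equivalence, (b)$\Rightarrow$(a) follows directly: if $\kappa_3(F_n)\to 8\nu = \kappa_3(G(\nu))$ and $\kappa_4(F_n)\to 48\nu = \kappa_4(G(\nu))$ then $\mathbf{M}(F_n)\to 0$, hence $d_1(F_n,G(\nu))\to 0$ and $F_n\to G(\nu)$ in distribution. For (a)$\Rightarrow$(b), convergence in distribution on a fixed chaos with matching second moment forces convergence of all moments (by hypercontractivity, all $L^p$ norms on the $q$th chaos are equivalent and uniformly bounded once the second moments converge), hence convergence of $\kappa_3$ and $\kappa_4$ to the corresponding cumulants of the limit $G(\nu)$, which are $8\nu$ and $48\nu$; this last step can alternatively be quoted from \cite{n-p-noncentral}. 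I would organize the write-up so that the contraction-kernel estimate — the inequality bounding $\E\big[(\tfrac1q\|DF\|_{\HH}^2 - 2F - 2\nu)^2\big]$ by the cumulant combination — is isolated as its own lemma, since that is the genuinely new and technically delicate ingredient, with everything else being assembly of now-standard Malliavin–Stein machinery.
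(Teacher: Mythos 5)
Your proposal is correct and follows essentially the same route the paper relies on: Theorem \ref{thm:Gamma-approximation-Int} is not proved in the paper but assembled from \cite{n-p-noncentral,StMethOnWienChaos,d-p}, and your plan (Döbler--Peccati Stein bounds, Malliavin integration by parts with $-DL^{-1}F=q^{-1}DF$, Cauchy--Schwarz, and then the Nourdin--Peccati contraction estimate showing $\E[(2\nu+2F-q^{-1}\|DF\|_{\HH}^2)^2]\le \frac{q-1}{3q}(\kappa_4(F)-12\kappa_3(F)+48\nu)$ for $q$ even) is exactly that standard argument, with the equivalence (a)$\Leftrightarrow$(b) handled correctly via hypercontractivity. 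One small correction: the ``novel operator theory approach'' from the abstract is \emph{not} needed here --- it enters only in the proof of the optimal Theorem \ref{thm:Main-Result-Int} to split test functions via $(I+\lambda S)^{-1}$; the key variance-versus-cumulant estimate for the present theorem is a purely combinatorial product-formula computation already available in \cite{n-p-noncentral}.
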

 The exact shape of the constant $C_{\nu,q}$ can be found in the aforementioned references. Note that $\kappa_3(G(\nu)), \kappa_4(G(\nu)) \neq 0$ unlike the case of normal approximation. We also recall the following natural generalization of the $1$-Wasserstein metric $d_1$ that we will make use of throughout the paper. Let $X$ and $Y$ be two real-valued random variables. For $k \ge 2$, define  
 \begin{equation*}
 d_k(X,Y) := \sup_{h \in \mathcal{H}_k} \abs[\Big]{ \E[h(X)] - \E[h(Y)]  } \label{eq:d_kMetricsDefinition}
 \end{equation*}
 where the class of the test functions is $ \mathcal{H}_k := \{ h \in C^{k-1}(\R) : h^{(k-1)} \in \operatorname{Lip}(\R) \text{ and } \norm{h^{(1)}}_\infty \leq 1,  \ldots, \norm{h^{(k)}}_\infty \leq 1 \}$. Here, $\norm{h^{(k)}}_{\infty}$ denotes the smallest Lipschitz constant of $h^{(k-1)}$, see \eqref{eq:SmallestLipschitzConstant}. A significant and also very challenging question, which we will deal with in this paper, is whether one can either provide an optimal rate or improve the rate \eqref{eq:Gamma-Rate-Int} available in Theorem \ref{thm:Gamma-approximation-Int}. For a general sequence $\{F_n: n\ge 1\}$ and a suitable probability metric $d$ (often we assume that the topology induced by metric $d$ is stronger than convergence in distribution), following \cite[Definition 9.2.1]{n-pe-1}, we say that a numerical sequence $\{\rho(n): n \in \N \}$ of strictly positive real numbers,  decreasing to $0$, yields an \textit{optimal rate} with respect to the metric $d$, if there exist two constants $C_1$ and $C_2$ (independent of $n$) such that 
 \begin{equation*}
 C_1 \le \frac{d(F_n,G(\nu))}{\rho(n)} \le C_2, \, \forall \, n \in \N.
 \end{equation*}
 Our main result is the following non asymptotic optimal Gamma approximation within the second Wiener chaos that improves upon the rate \eqref{eq:Gamma-Rate-Int} by a square power. 
 
\begin{thm}[\textbf{Non asymptotic optimal Gamma approximation}]\label{thm:Main-Result-Int}
Let $\nu>0$, and $G(\nu) \sim CenteredGamma(\nu)$. Assume that $F$ is a random variable in the second Wiener chaos associated with $X$, such that $\E[F^2]=2\nu$. Then there exist two constants $0<C_1<C_2$ (possibly depending on the parameter $\nu$) such that 
\begin{equation}\label{eq:optimal-rate}
C_1 \, \mathbf{M} (F) \le d_{2} (F, G(\nu)) \le C_2 \,  \mathbf{M} (F),
\end{equation}	
where the quantity $\mathbf{M}(F)$ is given by \eqref{eq:M}.
\end{thm}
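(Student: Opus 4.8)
The plan is to exploit the very explicit structure of elements of the second Wiener chaos: any $F = I_2(f)$ with $\E[F^2] = 2\nu$ can be written, up to equality in law, as $F = \sum_{i \ge 1} \alpha_i (N_i^2 - 1)$, where $\{N_i\}$ are i.i.d.\ standard Gaussians and $\{\alpha_i\}$ are the eigenvalues of the Hilbert--Schmidt operator associated with the kernel $f$, satisfying $\sum_i \alpha_i^2 = \nu$. From this representation one reads off the cumulants directly: for $r \ge 2$, $\kappa_r(F) = 2^{r-1}(r-1)! \sum_i \alpha_i^r$. In particular $\kappa_3(F) = 8 \sum_i \alpha_i^3$ and $\kappa_4(F) = 48 \sum_i \alpha_i^4$, while for the target $\kappa_3(G(\nu)) = 8\nu$ and $\kappa_4(G(\nu)) = 48\nu$ (these come from the eigenvalue sequence that is a single value $1$ with multiplicity $\nu$, informally). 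Thus the key combinatorial quantities are the signed power sums $p_3 := \sum_i \alpha_i^3 - \nu$ and $p_4 := \sum_i \alpha_i^4 - \nu$, and $\mathbf{M}(F)$ is comparable to $\max\{|p_3|, |p_4|\}$ up to the universal constants $8$ and $48$.

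For the upper bound $d_2(F, G(\nu)) \le C_2\,\mathbf{M}(F)$, I would run the operator-theoretic Stein's method machinery that the paper announces in its abstract — solving the Stein equation for the centered Gamma distribution as developed by D\"obler--Peccati, and estimating the resulting expression via Malliavin calculus. The decisive point is that, because $F$ lives in the \emph{second} chaos, the Malliavin derivative $DF$ and the operator $L^{-1}F$ interact in a particularly rigid way (e.g.\ $\langle DF, -DL^{-1}F\rangle_{\HH}$ is again a second-chaos element plus a constant), so the "remainder" terms in the Stein bound that are merely $O(\sqrt{\mathbf{M}})$ in the general $q$th chaos case of Theorem 1.3 can be shown to be genuinely $O(\mathbf{M})$ here. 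Concretely one expects a bound of the form $d_2(F, G(\nu)) \le C\big(|\kappa_3(F) - 8\nu| + |\kappa_4(F) - 48\nu| + (\text{third/fourth cumulant discrepancies})\big)$, and the $d_2$ metric (rather than $d_1$) is exactly what is needed so that the test functions have enough derivatives for the second-chaos cancellations to be visible. I would isolate a clean algebraic identity expressing the relevant inner-product functionals purely in terms of the $\alpha_i$ and then bound each piece by $|p_3|$ and $|p_4|$.

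For the lower bound $C_1\,\mathbf{M}(F) \le d_2(F, G(\nu))$, the natural route is to test against well-chosen smooth functions. Since $d_2$ controls differences of expectations of all $h$ with $\|h^{(j)}\|_\infty \le 1$ for $j \le 2$, and since $F$ and $G(\nu)$ have matching first two moments ($\E[F] = 0 = \E[G(\nu)]$, $\E[F^2] = 2\nu = \E[G(\nu)^2]$), the leading discrepancy between $\E[h(F)]$ and $\E[h(G(\nu))]$ for a smooth $h$ is governed, via Taylor/cumulant expansion, by the third and fourth cumulant differences weighted by $h^{(3)}$ and $h^{(4)}$. One then picks (a finite family of) test functions — for instance suitable bounded smooth perturbations whose third and fourth derivatives at $0$ decouple $p_3$ from $p_4$ — to produce two lower bounds, one proportional to $|p_3|$ and one proportional to $|p_4|$, and takes their maximum. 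Care is needed because $h^{(3)}, h^{(4)}$ are not themselves controlled by membership in $\mathcal{H}_2$, so one cannot simply use a polynomial; the fix is to use a fixed smooth compactly-supported bump, rescaled by $\mathbf{M}(F)$ if necessary, and to control the higher-order Taylor remainder using the boundedness of all moments of $F$ (which follows from hypercontractivity on the second chaos, uniformly once $\mathbf{M}(F)$ is small).

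The main obstacle I anticipate is the upper bound's error analysis: showing that every term produced by the D\"obler--Peccati Stein equation is $O(\mathbf{M}(F))$ and not merely $O(\sqrt{\mathbf{M}(F)})$. This requires carefully tracking the second-chaos algebra of $\langle DF, -DL^{-1}F\rangle_{\HH}$ and of the higher iterated contractions, identifying exact cancellations rather than crude Cauchy--Schwarz bounds, and verifying that the "operator theory approach to Stein's method" the paper introduces indeed upgrades the square-root rate to a linear rate in this chaos. The lower bound, by contrast, should be comparatively routine once the right test functions are fixed.
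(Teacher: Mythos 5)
Your proposal is a plan rather than a proof, and the two places where you defer the work are exactly where the paper's argument lives. For the upper bound, you correctly name the D\"obler--Peccati Stein equation and the second-chaos structure, but the mechanism that upgrades $O(\sqrt{\mathbf{M}})$ to $O(\mathbf{M})$ is not ``rigidity of $\langle DF,-DL^{-1}F\rangle_{\HH}$'' in any generic sense; it consists of two concrete steps you do not supply. First, the paper iterates the Stein identity twice (Lemmas \ref{lem:AuxiliarlyLemmaForMainTheorem} and \ref{lem:komaki2}) and, crucially, replaces each test function $h\in\mathcal{B}_{1,1}$ by the unique $g$ solving $h=g+2S(g)$ --- this requires showing $S$ is a compact operator on the Lipschitz space with no nonzero eigenvalues and invoking the Fredholm alternative (Theorem \ref{thm:fredholm-alternative}, Proposition \ref{prop:universal-bound-solution}). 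Without that splitting the residual term is $\sqrt{\Var(\Gamma_3(F)-2\Gamma_2(F))}$, which by Remark \ref{rem:Operator-Theory-Essential} is only of order $\sqrt{\mathbf{M}}$; with it, the residual becomes $\sqrt{\Var\big((\Gamma_3-2\Gamma_2)-2(\Gamma_2-2\Gamma_1)\big)}$. Second, the square root is removed by the quadratic estimate \eqref{eq:optimal-Var2}, $\Var\big((\Gamma_3-2\Gamma_2)-2(\Gamma_2-2F)\big)\le 2\,\Var^2(\Gamma_1(F)-2F)$, proved via the contraction identity \eqref{eq:CentredGammaInTermsOfContraction} and trace inequalities; combined with Lemma \ref{lem:Var(Gamma_r-2Gamma_r-1)}, which writes $\Var(\Gamma_1(F)-2F)$ as the cumulant combination $\tfrac{1}{3!}\kappa_4-\tfrac{4}{2!}\kappa_3+\tfrac{4}{1!}\kappa_2$ (vanishing at $G(\nu)$), every term is then linear in $\mathbf{M}(F)$. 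You explicitly flag this whole analysis as ``the main obstacle I anticipate,'' i.e.\ you have not carried it out.

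For the lower bound your route (Taylor expansion against smooth test functions chosen to decouple the third and fourth cumulant discrepancies) differs from the paper's and has unresolved gaps: test functions in $\mathcal{H}_2$ control only two derivatives, so the third- and fourth-order Taylor coefficients you want to extract are not directly dominated by $d_2$, and your own caveat ``uniformly once $\mathbf{M}(F)$ is small'' concedes the argument would only be asymptotic, whereas the theorem is non-asymptotic. The paper instead works with characteristic functions on a complex disk: exponential integrability on the second chaos gives analyticity of $\phi_F$ and $\phi_{G(\nu)}$ in a strip, a series expansion shows $|\phi_F(z)-\phi_{G(\nu)}(z)|\le C\,d_2(F,G(\nu))$ there, both characteristic functions are bounded away from zero on a fixed disk $D_R$, and Cauchy's estimate applied to $\log\phi_F-\log\phi_{G(\nu)}$ bounds each cumulant difference by $C\,d_2(F,G(\nu))$ simultaneously --- no choice of test function and no smallness assumption is needed. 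If you want to salvage your test-function approach you must exhibit explicit $h\in\mathcal{H}_2$ (after normalization) whose pairing with $\mu_F-\mu_{G(\nu)}$ is bounded below by $c|p_3|$ and $c|p_4|$ respectively with remainders provably of smaller order for \emph{all} admissible $F$, which you have not done.
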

 
\begin{rem} 
	\begin{description}
	\setlength\itemsep{-0.3em}
	\item[(a)] A significant feature of the optimal rate \eqref{eq:optimal-rate}, unlike the one in item (b) of Theorem \ref{thm:optimal-normal} in the normal approximation case, is that it is non asymptotic and a priori does not assume the law of the chaotic random variable $F$ to be close to that of $G(\nu)$. 
	\item[(b)] For the upper bound, the starting point is an adaption of the technique developed in \cite{n-p-optimal}. However, in order to achieve the optimal upper bound we introduce a novel technique within Stein's method to split test functions relying on tools from operator theory. This is the topic of section \ref{sec:operator-theory}.
	\item[(c)] Our methodology to obtain the optimal lower bound is based on complex analysis and differs from that in \cite{n-p-optimal}. Up to our knowledge this method is new. 
	\item[(d)] Theorem \ref{thm:Main-Result-Int} has to be seen as a full generalization of the main findings of \cite{OurGammaPaper}, where we assumed some additional technical conditions.
	\end{description}
	\end{rem}

The outline of our paper is as follows: In section $2$, we give a brief introduction to Mallia\-vin calculus on the Wiener space and specify the notation used in the paper. Section $3$ gathers the essential ingredients of Stein's method for the centered Gamma distribution, developed recently in \cite{d-p}. 
Section $4$ contains the main theoretical findings of this paper -- an upper bound for the $d_2$ distance between a general element $F$ living in a finite sum of Wiener chaoses and the target distribution $G(\nu)$ in terms of iterated Gamma operators, as well as the optimal Gamma approximation rate. The end of this section is devoted to applications of our main findings.  Lastly, we close the paper with an appendix section with focus on the newly introduced Gamma operators.  

\section{Preliminaries: Gaussian Analysis and Malliavin Calculus}
In this section, we provide a brief introduction to Malliavin calculus and define some of the operators used in this framework. For more details, see for example the textbooks \cite{n-pe-1,GelbesBuch,Nua-Nua}.

\subsection{Isonormal Gaussian Processes and Wiener Chaos}
Let $\mathfrak{H}$ be a real separable Hilbert space with inner product $\sprod{\cdot,\cdot}_{\mathfrak{H}}$, and $X = \{X(h) : h \in \mathfrak{H} \}$ be an isonormal Gaussian process, defined on some probability space $(\Omega, \mathscr{F},P)$. This means that $X$ is a family of centered, jointly Gaussian random variables with covariance structure $\E[X(g)X(h)] = \sprod{g,h}_{\mathfrak{H}}$. We assume that $\mathscr{F}$ is the $\sigma$-algebra generated by $X$. For an integer $q \geq 1$, we will write $\mathfrak{H}^{\otimes q}$ or $\mathfrak{H}^{\odot q}$ to denote the $q$-th tensor product of $\mathfrak{H}$, or its symmetric $q$-th tensor product, respectively. If $H_q(x) =(-1)^{q}e^{x^{2}/2}{\frac {d^q}{d x^n}}e^{-x^{2}/2}$ is the $q$-th Hermite polynomial, then the closed linear subspace of $L^2(\Omega)$ generated by the family $\{H_q(X(h)) : h \in \mathfrak{H}, \norm{h}_\mathfrak{H} = 1 \}$ is called the $q$-th \textit{Wiener chaos} of $X$ and will be denoted by $\mathscr{H}_q$. For $f \in \mathfrak{H}^{\odot q}$, let $I_q(f)$ be the $q$-th multiple Wiener-Itô integral of $f$ (see \cite[Definition~2.7.1]{n-pe-1}). An important observation is that for any $f \in \mathfrak{H}$ with $\norm{f}_{\mathfrak{H}}=1$ we have that $H_q(X(f)) = I_q(f^{\otimes q})$. As a consequence $I_q$ provides an isometry from $\mathfrak{H}^{\odot q}$ onto the $q$-th Wiener chaos $\mathscr{H}_q$ of $X$. It is a well-known fact, called the \textit{Wiener-Itô chaotic decomposition}, that any element $F \in L^2(\Omega)$ admits the expansion
\begin{equation}
F = \sum_{q=0}^{\infty} I_q(f_q), \label{eq:ChaoticExpansion}
\end{equation}
where $f_0 = \E[F]$ and the $f_q \in \mathfrak{H}^{\odot q}$, $q \geq 1$ are uniquely determined. An important result is the following isometry property of multiple integrals. Let $f \in \mathfrak{H}^{\odot p}$ and $g \in \mathfrak{H}^{\odot q}$, where $1 \leq q \leq p$. Then
\begin{equation}
\E[ I_p(f) I_q(g) ] = \begin{cases}
p! \, \sprod{f,g}_{\mathfrak{H}^{\otimes p}}  & \text{if } p= q\\
0 & \text{otherwise}.
\end{cases} \label{eq:IsometryProperty}
\end{equation}

\subsection{The Malliavin Operators}

We denote by $\mathscr{S}$ the set of \textit{smooth} random variables, i.e. all random variables of the form $F= g(X(\varphi_1),\ldots, X(\varphi_n))$, where $n \geq 1$, $\varphi_1, \ldots, \varphi_n \in \HH$ and $g:\R^n \to \R$ is a $C^{\infty}$-function, whose partial derivatives have at most polynomial growth. For these random variables, we define the \textit{Malliavin derivative} of $F$ with respect to $X$ as the $\HH$-valued random element $DF \in L^2(\Omega,\HH)$ defined as
\[ DF = \sum_{i=1}^{\infty} \frac{\partial g}{\partial x_i} \big( X(\varphi_1), \ldots, X(\varphi_n) \big) \, \varphi_i. \]
The set $\mathscr{S}$ is dense in $L^2(\Omega)$ and using a closure argument, we can extend the domain of $D$ to $\mathbb{D}^{1,2}$, which is the closure of $\mathscr{S}$ in $L^2(\Omega)$ with respect to the norm $\norm{F}_{\mathbb{D}^{1,2}} := \E[F^2] + \E[ \norm{DF}_{\HH}^2 ]$. See \cite{n-pe-1} for a more general definition of higher order Malliavin derivatives and spaces $\mathbb{D}^{p,q}$. The Malliavin derivative satisfies the following chain-rule. If $\phi:\R^m \to \R$ is a continuously differentiable function with bounded partial derivatives and $F=(F_1, \ldots, F_m)$ is a vector of elements of $\mathbb{D}^{1,q}$ for some $q$, then $\phi(F)\in \mathbb{D}^{1,q}$ and
\begin{equation}
D \phi(F) = \sum_{i=1}^{m} \frac{\partial \phi}{\partial x_i} (F) \, D F_i. \label{eq:ChainRule}
\end{equation}
Note that the conditions on $\phi$ are not optimal and can be weakened. For $F \in L^2(\Omega)$, with chaotic expansion as in \eqref{eq:ChaoticExpansion}, we define the \textit{pseudo-inverse} of the infinitesimal generator of the Ornstein-Uhlenbeck semigroup as
\[ L^{-1} F = - \sum_{p=1}^{\infty} \frac{1}{p} I_p(f_p). \]

The following integration by parts formula is one of the main ingredients to proving the main theorem of section \ref{sec:MainUpperBound}. Let $F,G \in \mathbb{D}^{1,2}$. Then 
\begin{equation}
\E[FG] = \E[F] \E[G] + \E[ \sprod{DG, -DL^{-1}F}_{\HH} ].  \label{eq:IntegrationByParts}
\end{equation}

\subsection{Gamma Operators and Cumulants}\label{sec:cumulant}

Let $F$ be a random variable with characteristic function $\phi_F(t) = \E[ e^{itF}]$. We define its $n$-th cumulant, denoted by  $\kappa_n(F)$, as
\[ \kappa_n(F) = \frac{1}{i^n} \frac{\partial^n}{\partial t^n} \log \phi_F(t) \Big\vert_{t=0}. \]
Let $F$ be a random variable with a finite chaos expansion. We define the operators $\Gamma_i$, $i \in \N_0$ via $\Gamma_0(F) := F$ and
\begin{equation}
\Gamma_{i+1} (F) := \sprod{D \Gamma_{i}(F) , -D L^{-1} F}_{\mathfrak{H}}, \quad \text{for } i\geq 0. \label{eq:GammOperatorDefinition}
\end{equation}
This is the Gamma operator used in the proof of the main theorem in \cite{n-p-optimal}, although it is defined differently there.
Note that there is also an alternative definition, which can be found in most other papers in this framework, see for example Definition 8.4.1 in \cite{n-pe-1} or Definition 3.6 in \cite{OptBerryEsseenRates}. For the sake of completeness, we also mention the classical Gamma operators, which we also call \textit{alternative} Gamma operators, which we shall denote by $\Gamma_{alt}$. These are defined via
\begin{equation}
\Gamma_{alt, 0}(F) := F \quad \text{and} \quad \Gamma_{alt, i+1} (F) := \sprod{D F , -D L^{-1} \Gamma_{alt, i}(F)}_{\mathfrak{H}}, \quad \text{for } i\geq 0. \label{eq:GammaOperatorAltDefinition}
\end{equation}
The classical Gamma operators are related to the cumulants of $F$ by the following identity from \cite{CumOnTheWienerSpace}: For all $j \geq 0$, we have
\[ \E[\Gamma_{alt, j}(F)] = \frac{1}{j!} \kappa_{j+1}(F). \]
If $j \geq 3$, this does not hold anymore for our new Gamma operators. Instead, in our next result, we will list some useful relations between the classical and the new Gamma operators.

\begin{Prop} \label{Prop:RelationOldAndNewGamma}
	Let $F$ be a centered random variable admitting a finite chaos expansion. Then
	\begin{itemize}
		\item[(a)] $\Gamma_1(F) = \Gamma_{alt,1}(F)$,
		\item[(b)] 
		$ \E\big[ \Gamma_j(F) \big] = \E\big[ \Gamma_{alt,j}(F) \big] = \frac{1}{j!} \kappa_{j+1}(F)$ for $j=1,2$.
		\item[(c)] $ \E\big[ \Gamma_3(F) \big] = 2 \, \E \big[ \Gamma_{alt,3}(F) \big] - \Var\big(\Gamma_1(F) \big) = \frac{1}{3} \kappa_4(F) - \Var\big(\Gamma_1(F) \big)$,
		\item[(d)] When $F= I_2(f)$, for some $f \in \HH^{\odot 2}$, is an element of the second Wiener chaos, then
		\[ \Gamma_j(F) = \Gamma_{alt,j}(F) \quad \text{for all } j \geq 1. \]
	\end{itemize}
\end{Prop}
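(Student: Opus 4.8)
The plan is to work directly from the two recursive definitions \eqref{eq:GammOperatorDefinition} and \eqref{eq:GammaOperatorAltDefinition}, exploiting the integration by parts formula \eqref{eq:IntegrationByParts} and the known identity $\E[\Gamma_{alt,j}(F)] = \frac{1}{j!}\kappa_{j+1}(F)$ from \cite{CumOnTheWienerSpace}. Part (a) is immediate: both $\Gamma_1(F)$ and $\Gamma_{alt,1}(F)$ are defined as $\sprod{DF,-DL^{-1}F}_{\HH}$, since $\Gamma_0(F)=\Gamma_{alt,0}(F)=F$. For part (b), the case $j=1$ follows from (a) together with \eqref{eq:IntegrationByParts} applied with $G=F$ (using that $F$ is centered), which gives $\E[\Gamma_1(F)] = \E[F^2] = \kappa_2(F)$. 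For $j=2$, I would first show $\Gamma_2(F) = \Gamma_{alt,2}(F)$ up to a term of zero expectation, or more directly compute $\E[\Gamma_2(F)]$ by applying \eqref{eq:IntegrationByParts} with the roles chosen so that $\E[\Gamma_2(F)] = \E[\sprod{D\Gamma_1(F),-DL^{-1}F}_{\HH}] = \E[F\,\Gamma_1(F)] - \E[F]\E[\Gamma_1(F)] = \E[F\,\Gamma_1(F)]$; then one recognizes $\E[F\,\Gamma_1(F)] = \frac12\kappa_3(F)$ by the standard cumulant computation (equivalently, $\E[\Gamma_1(F)] = \kappa_2$, $\E[F\Gamma_1(F)]$ reproduces the classical formula $\E[\Gamma_{alt,2}(F)] = \frac12\kappa_3(F)$).

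Part (c) is the main computational step and the place I expect the real work to be. The idea is to expand $\E[\Gamma_3(F)] = \E[\sprod{D\Gamma_2(F),-DL^{-1}F}_{\HH}]$ using \eqref{eq:IntegrationByParts} to get $\E[\Gamma_2(F)\,F]$ (again the product term vanishes since $F$ is centered), and then to relate $\E[\Gamma_2(F)\,F]$ to $\E[\Gamma_{alt,3}(F)]$ and $\Var(\Gamma_1(F))$. Concretely, since $\Gamma_{alt,3}(F) = \sprod{DF,-DL^{-1}\Gamma_{alt,2}(F)}_{\HH}$, another application of \eqref{eq:IntegrationByParts} yields $\E[\Gamma_{alt,3}(F)] = \E[F\,\Gamma_{alt,2}(F)] - \E[F]\E[\Gamma_{alt,2}(F)] = \E[F\,\Gamma_{alt,2}(F)]$. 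The crux is therefore the operator identity $\Gamma_2(F) = 2\,\Gamma_{alt,2}(F) - (\text{something whose product with } F \text{ has expectation } \Var(\Gamma_1(F)))$, or more honestly, a direct comparison: $\Gamma_2(F) - \Gamma_{alt,2}(F)$ should be an explicit Malliavin expression, and using $\Gamma_1(F) = \Gamma_{alt,1}(F)$ one writes $\Gamma_2(F) = \sprod{D\Gamma_1(F),-DL^{-1}F}_{\HH}$ versus $\Gamma_{alt,2}(F) = \sprod{DF,-DL^{-1}\Gamma_1(F)}_{\HH}$; their difference is handled by the symmetry of the bilinear form under $\E[\cdot]$ combined with the product rule for $D$ on $\sprod{DF,-DL^{-1}F}_{\HH}$. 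Pushing this through, $\E[F\Gamma_2(F)]$ splits into $2\E[F\Gamma_{alt,2}(F)]$ minus a cross term that one identifies as $\E[\Gamma_1(F)^2] - \E[\Gamma_1(F)]^2 = \Var(\Gamma_1(F))$; combined with $\E[\Gamma_{alt,3}(F)] = \frac{1}{6}\kappa_4(F)$ this gives exactly $\E[\Gamma_3(F)] = \frac13\kappa_4(F) - \Var(\Gamma_1(F))$. Keeping track of which term carries the variance — and justifying the manipulations via the fact that $F$ has a finite chaos expansion (so all integrations by parts and derivatives are legitimate and all expectations finite) — is the delicate bookkeeping.

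Part (d) then follows because in the second Wiener chaos $-DL^{-1}F = DF$ (as $L^{-1}I_2(f) = -\tfrac12 I_2(f)$), so the two recursions \eqref{eq:GammOperatorDefinition} and \eqref{eq:GammaOperatorAltDefinition} coincide term by term: $\Gamma_{i+1}(F) = \sprod{D\Gamma_i(F),DF}_{\HH}$ and $\Gamma_{alt,i+1}(F) = \sprod{DF,DL^{-1}\Gamma_{alt,i}(F)}_{\HH}$ with $-DL^{-1}\Gamma_{alt,i}(F)$ replaced appropriately — more carefully, one checks by induction that when $F\in\mathscr{H}_2$ both sequences satisfy the same recursion $\Gamma_{i+1} = \sprod{D\Gamma_i, DF}_{\HH}$, using that $\Gamma_{alt,i}(F)$ need not lie in $\mathscr{H}_2$ but the defining contraction only involves $-DL^{-1}$ applied to $F$ once the recursion is rewritten; the cleanest route is to observe directly from (c) and its higher analogues, or simply to note $-DL^{-1} = D$ on $\mathscr H_2$ makes \eqref{eq:GammOperatorDefinition} and \eqref{eq:GammaOperatorAltDefinition} literally the same recursion once one checks the base case $\Gamma_1 = \Gamma_{alt,1}$ from part (a). I would present (d) via this induction, flagging that the only subtlety is that intermediate iterates leave the second chaos, which is harmless since the operator $-DL^{-1}$ in the recursion is always applied to $F$, never to the iterates, after substituting $\Gamma_1=\Gamma_{alt,1}$.
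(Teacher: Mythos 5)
Parts (a)--(c) of your proposal follow essentially the same route as the paper: part (a) from the definitions, part (b) via integration by parts giving $\E[\Gamma_2(F)]=\E[F\,\Gamma_1(F)]=\E[F\,\Gamma_{alt,1}(F)]=\E[\Gamma_{alt,2}(F)]$, and part (c) via the product rule $D(F\Gamma_1(F))=\Gamma_1(F)DF+F\,D\Gamma_1(F)$ followed by two further integrations by parts, producing $\E[F\Gamma_2(F)]=\E[F^2\Gamma_{alt,1}(F)]-\E[\Gamma_{alt,1}(F)^2]$ and then $\E[F^2\Gamma_{alt,1}(F)]=\E[\Gamma_{alt,1}(F)]^2+2\,\E[\Gamma_{alt,3}(F)]$. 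Your bookkeeping of where the variance term comes from is exactly the paper's, and the cumulant identification $2\,\E[\Gamma_{alt,3}(F)]=\tfrac13\kappa_4(F)$ is correct.

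Part (d) is where you diverge from the paper and where there is a genuine gap. First, a factual error: since $L^{-1}I_2(g)=-\tfrac12 I_2(g)$, one has $-DL^{-1}=\tfrac12 D$ on the second chaos, not $-DL^{-1}=D$. More importantly, your justification --- that ``the operator $-DL^{-1}$ in the recursion is always applied to $F$, never to the iterates'' --- is false for the alternative recursion \eqref{eq:GammaOperatorAltDefinition}: there $-DL^{-1}$ is applied precisely to $\Gamma_{alt,i}(F)$, and you cannot replace it by $\tfrac12 D\Gamma_{alt,i}(F)$ without first knowing that $\Gamma_{alt,i}(F)$ is a constant plus an element of the second chaos. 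That fact is exactly what the induction must carry along (e.g.\ via $\Gamma_{alt,i}(F)=\E[\Gamma_{alt,i}(F)]+2^i I_2\big(f\contIterated{1}{i+1}f\big)$, so that $L^{-1}$ kills the constant and acts as $-\tfrac12$ on the chaotic part); once you add that hypothesis to the induction, both recursions reduce to $\Gamma_{i+1}(F)=\tfrac12\sprod{D\Gamma_i(F),DF}_{\HH}$ and the argument closes. The paper instead proves (d) by comparing the explicit contraction representations of $\Gamma_s$ (Lemma \ref{lem:RepresentationOfGamma_j}) and $\Gamma_{alt,s}$ (equation (5.25) of \cite{CumOnTheWienerSpace}): the recursive constants differ only in one factor, $q$ versus $sq-2r_1-\cdots-2r_{s-1}$, and for $q=2$ the indicator forces $r_1=\cdots=r_{s-1}=1$, so the two factors agree. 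Your inductive route is arguably more transparent, but as written it does not establish the structural fact about the iterates on which it silently relies.
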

The proofs of these statements can be found in the appendix along with an explicit representation of the Gamma operators in terms of contractions.

\subsection{Useful facts on Second Wiener Chaos}\label{sec:2wiener}

Let $F=I_2(f)$, for some $f \in \mathfrak{H}^{\odot 2}$ be a generic element in the second Wiener chaos. It is a classical result (see \cite[section 2.7.4]{n-pe-1}) that these kind of random variables can be analyzed through the associated \textit{Hilbert-Schmidt operator} $A_f : \mathfrak{H} \to \mathfrak{H}$ that maps $g \mapsto f \cont{1} g$. Denote by $\{ c_{f,i} : i \in \N \}$ the set of eigenvalues of $A_f$. We also introduce the following sequence of auxiliary kernels $\Big\{ f \contIterated{1}{p} f : p \geq 1 \Big\} \subset \HH^{\odot 2}$, defined recursively as $ f \contIterated{1}{1} f = f$, and, for $p \geq 2$,
$ f \contIterated{1}{p} f = \Big( f \contIterated{1}{p-1} f \Big) \cont{1} f$.  

\begin{Prop} (see e.g. \cite[p.~43]{n-pe-1})\mbox{} \\[-1em]\label{Prop:2choas-properties}
	\begin{enumerate}
		\item The random element $F$ admits the representation \begin{equation} F = \sum_{i=1}^{\infty} c_{f,i} \left( N_i^2 - 1 \right), \label{eq:SecondWienerChaosEigenvalueRepresentation} \end{equation}
		where the $(N_i)$ are i.i.d. $\mathscr{N}(0,1)$ and the series converges in $L^2(\Omega)$ and almost surely.
		\item For every $p \geq 2$
		\begin{equation}
		\begin{split} 
		\kappa_p(F)  &=2^{p-1} (p-1)! \sum_{i=1}^{\infty} c_{f,i}^p  = 2^{p-1} (p-1)!  \langle f, f \contIterated{1}{p-1} f \rangle_{\HH}\\
		&= 2^{p-1} (p-1)! \Tr \left( A^p_f \right) \label{eq:SecondWienerChaosCumulantFormulaEigenvalues} \end{split}\end{equation}
		where $ \Tr ( A^p_f )$ stands for the trace of the $p$th power of operator $A_f$.
	\end{enumerate}
\end{Prop}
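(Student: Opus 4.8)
The plan is to deduce both parts from the spectral theorem applied to the Hilbert--Schmidt operator $A_f$. Since $f\in\HH^{\odot 2}$, the map $A_f\colon g\mapsto f\cont{1}g$ is self-adjoint and Hilbert--Schmidt with $\sum_{i}c_{f,i}^{2}=\norm{f}^{2}_{\HH^{\tensor 2}}<\infty$; let $(e_i)$ be an orthonormal system of eigenvectors associated with the nonzero eigenvalues $c_{f,i}$. The first step is the kernel identity $f=\sum_{i}c_{f,i}\,e_i\tensor e_i$ in $\HH^{\odot 2}$: since $A_{e_i\tensor e_i}=\sprod{e_i,\cdot}_{\HH}\,e_i$, the element $\sum_i c_{f,i}\,e_i\tensor e_i$ induces the same Hilbert--Schmidt operator as $f$, and $g\mapsto A_g$ is injective on $\HH^{\odot 2}$.

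For part (1) I would apply $I_2$ term by term. Using $I_2(e_i\tensor e_i)=H_2(X(e_i))=X(e_i)^{2}-1$ and the isometry $\E[I_2(g)^2]=2\norm{g}_{\HH^{\tensor 2}}^{2}$, the partial sums $S_n:=\sum_{i=1}^{n}c_{f,i}(N_i^{2}-1)$ with $N_i:=X(e_i)$ converge to $F$ in $L^2(\Omega)$, because $\sum_i 2c_{f,i}^{2}<\infty$. The family $(N_i)$ is i.i.d.\ $\mathscr{N}(0,1)$ since $X$ is isonormal and the $e_i$ are orthonormal. Finally, $(S_n)$ is an $L^2$-bounded martingale with respect to $\sigma(N_1,\dots,N_n)$ (equivalently, $\sum_i\Var(c_{f,i}(N_i^2-1))=\sum_i 2c_{f,i}^2<\infty$, so Kolmogorov's one-series theorem applies), which upgrades the convergence to almost sure.

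For part (2) I would first treat a single summand: from $\log\E[e^{tN_i^2}]=-\tfrac12\log(1-2t)$ on $(-\infty,\tfrac12)$ one reads off $\kappa_p(N_i^2)=2^{p-1}(p-1)!$, hence $\kappa_p\big(c_{f,i}(N_i^2-1)\big)=2^{p-1}(p-1)!\,c_{f,i}^{p}$ for $p\geq 2$ by $\kappa_p(aY+b)=a^{p}\kappa_p(Y)$. Cumulants being additive over the independent summands of $S_n$, and since $\sum_i\abs{c_{f,i}}^{p}\leq\norm{A_f}_{\mathrm{op}}^{\,p-2}\sum_i c_{f,i}^{2}<\infty$ for $p\geq2$, passing to the limit (via $L^2$-convergence of moments, or convergence of moment generating functions near $0$) gives $\kappa_p(F)=2^{p-1}(p-1)!\sum_i c_{f,i}^{p}$. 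The remaining two expressions are rewritings of $\sum_i c_{f,i}^{p}$: $A_f^{p}$ has eigenvalues $c_{f,i}^{p}$ and is trace class for $p\geq2$, so $\sum_i c_{f,i}^{p}=\Tr(A_f^{p})$; and an induction on $f\contIterated{1}{1}f=f$, $f\contIterated{1}{p}f=\big(f\contIterated{1}{p-1}f\big)\cont{1}f$ shows $f\contIterated{1}{p}f=\sum_i c_{f,i}^{p}\,e_i\tensor e_i$, whence $\sprod{f,f\contIterated{1}{p-1}f}_{\HH^{\tensor 2}}=\sum_i c_{f,i}^{p}$.

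The main obstacle is the passage to the limit in the cumulant identity of part (2), i.e.\ the interchange $\kappa_p(\lim_n S_n)=\lim_n\kappa_p(S_n)$; this is controlled by the absolute summability $\sum_i\abs{c_{f,i}}^{p}<\infty$ for $p\geq2$ together with either uniform control of the moments or convergence of the relevant transforms on a neighbourhood of the origin. A second, more pedantic point is confirming that the kernel of $A_f$ contributes nothing to $f$, which follows from injectivity of $g\mapsto A_g$ on $\HH^{\odot 2}$; the rest is bookkeeping with the Wiener--It\^o isometry and the elementary cumulant scaling rules.
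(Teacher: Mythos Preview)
Your argument is correct and follows the standard route (spectral decomposition of $A_f$, termwise application of $I_2$ via the identity $I_2(e_i\tensor e_i)=H_2(X(e_i))$, martingale/Kolmogorov for a.s.\ convergence, and additivity of cumulants over independent summands). Note that the paper does not actually prove this proposition: it is stated as a classical fact with a pointer to \cite[p.~43]{n-pe-1}, so there is no in-paper proof to compare against. What you have written is essentially the argument one finds in that reference, so your proposal agrees with the intended source.
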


It is known that when $\nu$ is an integer, $G(\nu)\sim \overline{\chi}^2$ is a centered chi-squared random variable with $\nu$ degrees of freedom, and \eqref{eq:SecondWienerChaosEigenvalueRepresentation} shows that $G(\nu)$ is itself an element of the second Wiener chaos, where $\nu$-many of the eigenvalues are $1$ and the remaining ones are $0$. Hence, in this case, we deduce from \eqref{eq:SecondWienerChaosCumulantFormulaEigenvalues} that $\kappa_p(G(\nu)) = 2^{p-1} (p-1)! \, \nu$. Perhaps not surprisingly, this is also the case when $\nu$ is any positive real number.

\begin{Lem}
	Let $\nu>0$ and $G(\nu) \sim CenteredGamma(\nu)$. Then 
	\begin{equation} 
	\kappa_p(G(\nu)) = \begin{dcases}
	0 & , p=1; \\
	2^{p-1} (p-1)! \, \nu &, p \geq 2.
	\end{dcases}  \label{eq:CenteredGammaCumulants}
	\end{equation}
\end{Lem}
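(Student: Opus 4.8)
The statement to prove is the computation of the cumulants of the centered Gamma distribution $G(\nu)$, and I would proceed directly from the defining relation $G(\nu) = 2\widehat{G}(\nu/2) - \nu$, where $\widehat{G}(\nu/2)$ is a standard Gamma variable. The cleanest route is through cumulant generating functions. First I would recall that for a standard Gamma random variable $\widehat{G}(\alpha)$ with shape parameter $\alpha$, the moment generating function is $\E[e^{t\widehat{G}(\alpha)}] = (1-t)^{-\alpha}$ for $t<1$, so its cumulant generating function is $\psi_{\widehat{G}(\alpha)}(t) = -\alpha \log(1-t) = \alpha \sum_{p\geq 1} t^p/p$. Reading off the coefficients, $\kappa_p(\widehat{G}(\alpha)) = \alpha \,(p-1)!$ for all $p\geq 1$.

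**Passing to the centered Gamma.** Next I would use the elementary transformation rules for cumulants: under an affine map $Y = aZ + b$, one has $\kappa_1(Y) = a\,\kappa_1(Z) + b$ and $\kappa_p(Y) = a^p\,\kappa_p(Z)$ for $p\geq 2$. Applying this with $Z = \widehat{G}(\nu/2)$, $a = 2$, $b = -\nu$, and $\alpha = \nu/2$: for $p=1$ we get $\kappa_1(G(\nu)) = 2\cdot(\nu/2) - \nu = 0$, confirming the centering; and for $p\geq 2$ we get $\kappa_p(G(\nu)) = 2^p \,\kappa_p(\widehat{G}(\nu/2)) = 2^p \cdot (\nu/2)\cdot (p-1)! = 2^{p-1}(p-1)!\,\nu$, which is exactly \eqref{eq:CenteredGammaCumulants}. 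Alternatively, one can bypass the transformation rules entirely by computing $\psi_{G(\nu)}(t) = \log \E[e^{tG(\nu)}] = -\nu t - \tfrac{\nu}{2}\log(1-2t) = -\nu t + \tfrac{\nu}{2}\sum_{p\geq 1} (2t)^p/p$; the linear terms cancel, leaving $\psi_{G(\nu)}(t) = \sum_{p\geq 2} 2^{p-1}(p-1)!\,\nu \cdot t^p/p!$, and matching against $\psi(t) = \sum_p \kappa_p t^p/p!$ gives the claim.

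**On the remaining cases.** There is essentially no obstacle here — the result is a short computation — but I would note two points of care. One should briefly justify the interchange of differentiation/series expansion with the logarithm, which is immediate since the MGF is analytic in a neighborhood of $t=0$ (the radius of convergence is governed by $|2t|<1$). One might also remark, for consistency with the surrounding text, that when $\nu$ is a positive integer the formula agrees with what \eqref{eq:SecondWienerChaosCumulantFormulaEigenvalues} gives for a centered chi-squared variable with $\nu$ degrees of freedom (namely $\nu$ eigenvalues equal to $1$), so the lemma genuinely extends that observation to arbitrary $\nu>0$. The only mildly delicate part is making sure the centering is handled correctly at $p=1$ separately from the scaling at $p\geq 2$, since the affine shift affects only the first cumulant.
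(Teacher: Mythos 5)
Your proof is correct and follows essentially the same route as the paper: both arguments start from the cumulant generating function $K(t) = \tfrac{\nu}{2}\log\bigl(\tfrac{1}{1-2t}\bigr) - \nu t$ of $G(\nu)$; the paper extracts the cumulants by an induction on the $p$-th derivative of $K$ evaluated at $t=0$, while you read them off from the power-series expansion (or equivalently via the affine transformation rules for cumulants). The difference is purely cosmetic, and your remark about consistency with the chi-squared case for integer $\nu$ matches the observation the paper makes just before the lemma.
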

\begin{proof}
	Since the cumulant generating function of a Gamma random variable is well-known, we can easily compute that of $G(\nu)$ to be $K(t) = \frac{\nu}{2} \log \left( \frac{1}{1- 2t} \right) - \nu t$.	By simple induction over $p$, we obtain
	\[ \frac{\mathrm{d}^p K}{\mathrm{d} t^p} (t) = \begin{dcases}
	- \nu + \frac{\nu}{1-2t} & , p=1; \\
	\frac{\nu}{2} \frac{2^p (p-1)!}{(1-2t)^{p+1}}& , p \geq 2.
	\end{dcases}   \]
	The result now follows by letting $t=0$.
\end{proof}

\begin{Lem} \label{lem:Var(Gamma_r-2Gamma_r-1)}
	Let $F=I_2(f)$ for some $f \in \HH^{\odot 2}$, and denote by $A_f$ the corresponding Hilbert-Schmidt operator with eigenvalues $\{ c_{f,i} : i \geq 1 \}$. Then for every $r \geq 1$,
	\begin{align*}
	\Var \Big( \Gamma_r(F) -  2 \Gamma_{r-1}(F) \Big) & = 2^{2r+1} \sum_{i=1}^{\infty} c_{f,i}^{2r} ( c_{f,i} - 1 )^2 \\
	& = \frac{1}{(2r+1)!}  \kappa_{2r+2}(F) - \frac{4}{(2r)!}  \kappa_{2r+1}(F) + \frac{4}{(2r-1)!}  \kappa_{2r}(F).
	\end{align*}
\end{Lem}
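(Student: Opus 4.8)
The plan is to prove the identity in Lemma~\ref{lem:Var(Gamma_r-2Gamma_r-1)} by exploiting the explicit eigenvalue representation of elements of the second Wiener chaos. First I would use Proposition~\ref{Prop:2choas-properties}(d) to reduce everything to the classical (alternative) Gamma operators, since for $F=I_2(f)$ we have $\Gamma_r(F)=\Gamma_{alt,r}(F)$ for all $r\ge 1$, and the alternative operators interact cleanly with the Wiener--It\^o structure. The key computational input is an explicit formula for $\Gamma_r(F)$ when $F$ lives in the second chaos: writing $F=\sum_i c_{f,i}(N_i^2-1)$ as in \eqref{eq:SecondWienerChaosEigenvalueRepresentation}, one checks (this is standard, and also recorded in the appendix of the paper) that $\Gamma_r(I_2(f)) = 2^r I_2\big(f\contIterated{1}{r+1}f\big) + 2^r\,\langle f, f\contIterated{1}{r}f\rangle_{\HH}$, equivalently $\Gamma_r(F) = 2^r\sum_{i} c_{f,i}^{r+1}(N_i^2-1) + 2^{r-1}\kappa_{r+1}(F)/(r!)\cdot(\text{const})$; the point is that the random (non-constant) part of $\Gamma_r(F)$ is a second-chaos integral whose eigenvalues are $2^r c_{f,i}^{r+1}$, up to the normalization $N_i^2-1$ appearing with coefficient $c_{f,i}^{r+1}$ scaled by $2^r$.

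With that in hand, the linear combination $\Gamma_r(F)-2\Gamma_{r-1}(F)$ has random part a second-chaos element with representation $\sum_i \big(2^r c_{f,i}^{r+1} - 2\cdot 2^{r-1}c_{f,i}^{r}\big)(N_i^2-1) = 2^r\sum_i c_{f,i}^{r}(c_{f,i}-1)(N_i^2-1)$. Since $\Var(N_i^2-1)=2$ and the $N_i$ are independent, the variance is $2\cdot 2^{2r}\sum_i c_{f,i}^{2r}(c_{f,i}-1)^2 = 2^{2r+1}\sum_i c_{f,i}^{2r}(c_{f,i}-1)^2$, which is exactly the first claimed expression. (Here one must be slightly careful that the constant parts of $\Gamma_r$ and $2\Gamma_{r-1}$ do not contribute to the variance, which is automatic, and that the series manipulations are justified by $L^2$-convergence, which follows from $\sum_i c_{f,i}^2<\infty$ together with boundedness of the eigenvalues for $r\ge 1$.)

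The second step is purely algebraic: expand $2^{2r+1}\sum_i c_{f,i}^{2r}(c_{f,i}-1)^2 = 2^{2r+1}\big(\sum_i c_{f,i}^{2r+2} - 2\sum_i c_{f,i}^{2r+1} + \sum_i c_{f,i}^{2r}\big)$ and then substitute the cumulant formula \eqref{eq:SecondWienerChaosCumulantFormulaEigenvalues}, namely $\sum_i c_{f,i}^p = \kappa_p(F)/(2^{p-1}(p-1)!)$, for $p=2r+2,\ 2r+1,\ 2r$. This gives coefficients $2^{2r+1}/(2^{2r+1}(2r+1)!) = 1/(2r+1)!$ for the $\kappa_{2r+2}$ term, $2^{2r+1}\cdot 2/(2^{2r}(2r)!) = 4/(2r)!$ for the $\kappa_{2r+1}$ term, and $2^{2r+1}/(2^{2r-1}(2r-1)!) = 4/(2r-1)!$ for the $\kappa_{2r}$ term, matching the statement. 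I do not anticipate a genuine obstacle here; the only thing requiring care is establishing (or citing from the appendix) the explicit eigenvalue representation of $\Gamma_r(I_2(f))$ and the interchange of the infinite sum with variance, which is the one place where a rigorous argument needs the $L^2$-convergence of \eqref{eq:SecondWienerChaosEigenvalueRepresentation} and the summability of the eigenvalues.
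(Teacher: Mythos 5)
Your proof is correct and follows essentially the same route as the paper: both arguments rest on the representation $\CenteredGamma_r(F)=2^r I_2\big(f\contIterated{1}{r+1}f\big)$ (the paper cites it as equation (24) of \cite{a-p-p}), followed by the substitution $\sum_i c_{f,i}^p=\kappa_p(F)/(2^{p-1}(p-1)!)$. The only cosmetic difference is that you compute the variance by diagonalizing into the independent $(N_i^2-1)$ representation, whereas the paper uses the isometry property \eqref{eq:IsometryProperty} and expresses the result as $2^{2r+1}\Tr\big(A_f^{2r+2}-2A_f^{2r+1}+A_f^{2r}\big)$ --- these are the same computation in spectral versus operator notation.
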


\begin{proof}
	From \cite{a-p-p} equation (24), which follows by induction on $r$, we have the representation 
	\begin{equation}\label{eq:CentredGammaInTermsOfContraction}
	\CenteredGamma_r(F) = 2^r I_2 \Big(f \contIterated{1}{r+1} f \Big).
	\end{equation}
	Using the isometry property \eqref{eq:IsometryProperty}, we obtain
	\begin{align*}
	\Var \Big( \Gamma_r(F) - & 2 \Gamma_{r-1}(F) \Big) = 2^{2r+1} \, \norm{ f \contIterated{1}{r+1} f - f \contIterated{1}{r} f }_{\HH^{\otimes 2}}^{2}  \\
	& = 2^{2r+1} \Big( \sprod{f, f \contIterated{1}{2r+1} f}_{\HH^{\otimes 2}} - 2 \, \sprod{f, f \contIterated{1}{2r} f}_{\HH^{\otimes 2}} + \sprod{f, f \contIterated{1}{2r-1} f}_{\HH^{\otimes 2}} \Big)  \\
	& = 2^{2r+1} \Tr \Big( A_f^{2r+2} - 2 \, A_f^{2r+1} + A_f^{2r} \Big).
	\end{align*}
	The result now follows with \eqref{eq:SecondWienerChaosCumulantFormulaEigenvalues}.
\end{proof}

\section{Stein's Method for the centered Gamma distribution }\label{sec:operator-theory}
Let $X_r \sim \Gamma(r,1)$ be distributed according to a Gamma distribution with \textit{shape parameter} $r>0$.  It means that random variable $X_r$ admits the density 
\begin{equation}
p_r(x) = \begin{cases} \label{eq:GammaPDF}
\frac{1}{\Gamma(r)} x^{r-1} e^{-x}, & \text{if } x >0,\\
0, & \text{otherwise}.
\end{cases}
\end{equation}
Consider the centered Gamma random variable $G(\nu) = 2 \, X_{\nu/2} - \nu  \sim CenteredGamma(\nu) $. Stein's method for $X_{\nu/2}$ has first been studied in \cite{LukThesis} and then later been refined in \cite{PickettThesis}.
It is well known (see e.g. \cite[equation~(24)]{d-p}) that the Stein equation for the centered Gamma random variable $G(\nu)$ associated to the test function $h$ is given by the following first order ODE with polynomial coefficients 

\begin{equation}\label{eq:CenteredGamma-stein-eq}
2(x+\nu) f'(x) - x f(x) = h(x) - \E \left[   h(G(\nu)) \right],
\end{equation}
where $h:\R \to \R$ is measurable and $\E \vert h(G(\nu)) \vert < \infty$.  The following result is taken from \cite[Theorem 2.3]{d-p} and plays a crucial role in our analysis. For the reader's convenience we restate it here. We also need the following convention that for every function $f:\R \to \R$ the quantity $ \norm{f'}_\infty$ stands for the smallest Lipschitz constant, i.e.
\begin{equation} \label{eq:SmallestLipschitzConstant}
\norm{f'}_{\infty} = \sup_{\substack{ x,y \in \R \\ x \neq y}} \frac{\abs{f(x) - f(y)} }{\abs{x-y}} \in \R \cup \{ +\infty\}.
\end{equation}
It is worth pointing out that $ \norm{f'}_\infty$ coincides with the uniform norm of the derivative of $f$ whenever $f$ is differentiable. 
\begin{thm}\label{thm:Gamma-Stein-Solution-Properties}(\cite[Theorem 2.3]{d-p})
(a) Let $h$ be a Lipschitz-continuous function on the  whole real line $\R$. Then there exists a unique bounded Lipschitz-continuous solution $S(h)$ to the equation \eqref{eq:CenteredGamma-stein-eq} on the whole real line $\R$ satisfying the bounds
$$ \big \Vert S(h) \big \Vert_\infty \le \Vert h' \Vert_\infty, \quad \text{and} \quad \big \Vert S(h)' \big \Vert_\infty \le c_\nu \Vert h' \Vert_\infty,$$
where the constant $c_\nu = \max \{ 1,  \frac{2}{\nu} \}$.  \\
(b) Suppose that the function $h$ is continuously differentiable on $\R$ such that both $h$ and $h'$ are Lipschitz-continuous. Then there is a continuously differentiable solution $S(h)$ of equation \eqref{eq:CenteredGamma-stein-eq} on $\R$ whose derivative $S(h)'$ is Lipschitz-continuous, and moreover 
$$ \big \Vert S(h)'' \big \Vert_\infty \le 	c_\nu \Vert h' \Vert_\infty + \Vert h''\Vert_\infty.$$
	\end{thm}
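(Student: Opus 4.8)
The plan is to solve the first-order linear ODE \eqref{eq:CenteredGamma-stein-eq} explicitly and then read all three bounds off the solution formula. Write $\tilde h := h - \E[h(G(\nu))]$ and let $p$ denote the density of $G(\nu)$, which is supported on $(-\nu,\infty)$. Dividing \eqref{eq:CenteredGamma-stein-eq} by $2(x+\nu)$ and using $\tfrac{x}{2(x+\nu)} = \tfrac12 - \tfrac{\nu}{2(x+\nu)}$, the integrating factor is $\mu(x)=(x+\nu)^{\nu/2}e^{-x/2}$, which is a constant multiple of $2(x+\nu)p(x)$. A short computation gives the identity
\[ \frac{d}{dx}\big[2(x+\nu)p(x)\big] = -x\,p(x), \]
equivalently (using $\E[G(\nu)]=0$) $\int_x^{\infty}t\,p(t)\,dt = 2(x+\nu)p(x) = -\int_{-\nu}^{x}t\,p(t)\,dt$, which puts the Stein operator in divergence form. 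Hence the unique bounded solution of \eqref{eq:CenteredGamma-stein-eq} is
\[ S(h)(x) = \frac{1}{2(x+\nu)p(x)}\int_{-\nu}^{x}\tilde h(t)\,p(t)\,dt = \frac{-1}{2(x+\nu)p(x)}\int_{x}^{\infty}\tilde h(t)\,p(t)\,dt, \]
the two forms agreeing since $\int\tilde h\,p=0$; any other solution differs by a multiple of the unbounded homogeneous solution.

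For the bounds in part (a): fix $x$ and note that $\tilde h(t)-\tilde h(x)=h(t)-h(x)$ is $\norm{h'}_\infty$-Lipschitz in $t$. Combining the two representations above with weights $1-F(x)$ and $F(x)$, where $F$ is the c.d.f.\ of $G(\nu)$, cancels the uncontrolled term $\tilde h(x)$ and yields
\[ \abs{S(h)(x)} \le \frac{\norm{h'}_\infty}{2(x+\nu)p(x)}\Big[(1-F(x))\!\int_{-\nu}^{x}\!(x-t)p(t)\,dt + F(x)\!\int_{x}^{\infty}\!(t-x)p(t)\,dt\Big]. \]
The density identity above makes the bracket telescope to exactly $2(x+\nu)p(x)$, so $\norm{S(h)}_\infty\le\norm{h'}_\infty$. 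Differentiating the solution formula (or \eqref{eq:CenteredGamma-stein-eq}) gives $S(h)'(x)=\big(\tilde h(x)+x\,S(h)(x)\big)/\big(2(x+\nu)\big)$; by l'H\^opital $S(h)(-\nu)=\tilde h(-\nu)/\nu$, so the numerator vanishes at $x=-\nu$ and $S(h)'$ extends continuously to the left endpoint. Plugging the analogous weighted representation of this numerator into the quotient and using the density identities once more produces $\norm{S(h)'}_\infty\le c_\nu\norm{h'}_\infty$, the constant $c_\nu=\max\{1,2/\nu\}$ arising from the behaviour near $x=-\nu$ (where $p$ blows up when $\nu<2$).

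For part (b), differentiate \eqref{eq:CenteredGamma-stein-eq} a second time: $2(x+\nu)S(h)''(x)=\tilde h'(x)+S(h)(x)+(x-2)S(h)'(x)$. Using the boundary values of $S(h),S(h)',\tilde h$ at $x=-\nu$ found above, the right-hand side again vanishes there, so $S(h)''$ is continuous up to the endpoint and is estimated exactly as in the previous step, now picking up the extra term $\norm{h''}_\infty=\norm{\tilde h'}_\infty$. A more bookkeeping-friendly alternative is to reduce to the classical Gamma Stein equation: the substitution $x=2y-\nu$ turns \eqref{eq:CenteredGamma-stein-eq} into $y\phi'(y)+(\tfrac{\nu}{2}-y)\phi(y)=\tfrac12\big(h(2y-\nu)-\E[h(G(\nu))]\big)$, i.e.\ $S(h)(x)=\tfrac12 g_0\big(\tfrac{x+\nu}{2}\big)$ where $g_0$ is the standard Stein solution for $\Gamma(\nu/2,1)$; the bounds of \cite{LukThesis,PickettThesis} on $g_0,g_0',g_0''$ then transfer, the chain-rule factors $\tfrac12$ together with the shape parameter $\nu/2$ producing exactly $c_\nu=\max\{1,2/\nu\}$.

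The step I expect to be the main obstacle is the uniform control of $S(h)'$ and $S(h)''$: the solution formula carries a $1/(x+\nu)$ (resp.\ $1/(x+\nu)^2$) factor, so one must simultaneously verify that the relevant numerators vanish to the right order at the left endpoint $x=-\nu$ — the source of $c_\nu$ and, for $\nu<2$, of the density singularity — and control the competing linear growth of $\tilde h(x)$ and of $x\,S(h)(x)$ as $x\to+\infty$, where a cancellation keeps the derivatives bounded. Extracting the sharp constant $c_\nu=\max\{1,2/\nu\}$ in the $S(h)''$-bound is the most delicate point, and this is where the reduction to the non-centered Gamma, with the estimates of \cite{LukThesis,PickettThesis} quoted in the precise form needed, is most helpful.
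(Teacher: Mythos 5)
First, note that the paper does not prove this theorem at all: it is quoted verbatim from D\"obler and Peccati \cite[Theorem 2.3]{d-p}, so there is no internal proof to compare against. Judged on its own merits, your proposal correctly identifies the right machinery for the half-line $(-\nu,\infty)$ — the explicit solution via the integrating factor $2(x+\nu)p(x)$, the identity $\tfrac{d}{dx}[2(x+\nu)p(x)]=-x\,p(x)$, and the weighted combination with weights $F(x)$ and $1-F(x)$ that cancels $\tilde h(x)$ and telescopes to give $\norm{S(h)}_\infty\le\norm{h'}_\infty$ there; that computation checks out. But there are two genuine gaps.

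First, the theorem asserts existence, uniqueness, and all three bounds \emph{on the whole real line}, and this matters for the paper (a second-chaos $F$ with a negative eigenvalue takes values below $-\nu$). Your solution formula $S(h)(x)=\tfrac{1}{2(x+\nu)p(x)}\int_{-\nu}^x\tilde h\,p$ is meaningless for $x<-\nu$, where $p\equiv 0$; there the solution must be built from the auxiliary function $\hat q(x)=-2^{-\nu/2}(-(x+\nu))^{\nu/2-1}e^{-(x+\nu)/2}$ as in \eqref{eq:ExplicitRepresentationOfSolution}, and the estimates on $(-\infty,-\nu)$ cannot be obtained by your weighting trick, since neither tail integral of $\hat q$ is a probability and the $F(x)/(1-F(x))$ cancellation is unavailable. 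This region is a substantive part of the D\"obler--Peccati argument, not a symmetric afterthought. Second, the bounds $\norm{S(h)'}_\infty\le c_\nu\norm{h'}_\infty$ and $\norm{S(h)''}_\infty\le c_\nu\norm{h'}_\infty+\norm{h''}_\infty$ — which you yourself flag as the main obstacle — are asserted rather than derived: "plugging the analogous weighted representation \ldots produces" the bound is exactly the step where the constant $c_\nu=\max\{1,2/\nu\}$ has to be extracted, and no computation is offered. The proposed fallback of transferring bounds from \cite{LukThesis,PickettThesis} via $x=2y-\nu$ does not close this gap either: those references require stronger hypotheses on $h$ (bounded higher derivatives) and yield different constants; the point of \cite[Theorem 2.3]{d-p} is precisely that it obtains these sharp bounds under a Lipschitz-only assumption. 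So the skeleton is right, but the proof as written establishes only the sup-norm bound on $(-\nu,\infty)$ and leaves the rest of the theorem unproved.
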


\subsection{Explicit Formula for the Solution of the Stein Equation}
This section is entirely based on \cite{d-p}. 
It is known that a Stein equation for the $\Gamma(r,1)$ distribution is given by
\begin{equation} \label{eq:GammSteinEq}
x f'(x) + (r-x) f(x) = h(x) - \E[h(X_{r})],
\end{equation}
where $h:\R \to \R$ is a measurable test function with $\E \vert h(X_r) \vert < +\infty$. D\"obler and Peccati \cite[p.~3406]{d-p} showed that if $h \in \operatorname{Lip}(\R)$, then there exists a unique Lipschitz-continuous function $f_h$ on $\R$ solving \eqref{eq:GammSteinEq}, given by 
\[
f_h(x) = \begin{cases}
f_h^-(x), & x < 0, \\
f_j^+(x), & x > 0,
\end{cases}
\]
where for $x < 0$, $f_h^-(x) = 
\frac{1}{x q_l(x)} \int_{0}^{x} \Big( h(t) - \E \big[ h(X_r) \big] \Big) q_l(t)  dt$ and $q_l(x) = -(-x)^{r-1} e^{-x}$. Also 
$ f_h^+ (x) = 
\frac{1}{x p_r(x)} \int_{0}^{x} \Big( h(t) - \E \big[ h(X_r) \big] \Big) p_r(t) dt$ for $x >0$. Furthermore, one can extend $f_h^-$ and $f_h^+$ continuously by setting
$f_h^-(0) = f_h^+(0) := \frac{h(0) - \E[h(X_r)]}{r}$. Now, for a given test function $h:\R\to \R$, set $h_1(x) := h(2x-\nu)$. Following \cite[p.~3399]{d-p}, if $f_h$ is the solution of \eqref{eq:GammSteinEq} (with $r=\nu/2$), where $h$ is replaced by $h_1$, then $S(h)(x):= \frac{1}{2} \, f_h \left( \frac{x+\nu}{2} \right)$ solves \eqref{eq:CenteredGamma-stein-eq}. Therefore, the unique bounded solution $S(h)$ of the Stein equation \eqref{eq:CenteredGamma-stein-eq} admits the following explicit representation  
\begin{equation} \label{eq:ExplicitRepresentationOfSolution}
S(h)(x) = \int_{-\nu} ^{x} \bigg( \frac{\hat q(t)}{2 (x+\nu) \hat q(x)} \ind{x \leq -\nu }(x) + \frac{\hat p_{\nu}(t)}{2 (x+\nu) \hat p_{\nu}(x)} \ind{x > - \nu }(x) \bigg)  \Big( h(t) - \E\big[ h(G(\nu)) \big] \Big) \, dt,
\end{equation}
where $\hat p_\nu$ is the density of the centered Gamma distribution $G(\nu)$ given by 
\[ 
\hat p_\nu (x) = \frac{1}{2} \, p_{\nu/2} \left( \frac{x+\nu}{2} \right) =
\begin{cases}
2^{-\frac{\nu}{2}} \,\Gamma \left(\frac{\nu}{2} \right)^{-1} \, (x+ \nu)^{\frac{\nu}{2} -1} \, e^{- \frac{x+ \nu}{2}}, & x > - \nu \\
0, & x \leq -\nu;
\end{cases}
\]
and $\hat q(x) := \frac{1}{2} \, q_l\left( \frac{x+\nu}{2} \right) = - \, 2^{-\frac{\nu}{2}} \big(- (x+\nu) \big)^{\frac{\nu}{2} - 1} \, e^{- \frac{x+\nu}{2}}$. Also note that 
\begin{equation}\label{eq:solution-boundary-value}
S(h)(-\nu) = \frac{h(-\nu) - \E\big[h(G(\nu)) \big]}{\nu}.
\end{equation}

The following lemma will be used in the proof of Proposition \ref{prop:being-compact-operator}. Using a simple adaptation, a similar statement also holds for the solution $S(h)$ corresponding to the Stein equation \eqref{eq:CenteredGamma-stein-eq} of the centered Gamma distribution $G(\nu)$. 
\begin{lem}\label{rem:derivative-bounds}
	Let $X_r \sim \Gamma(r,1)$ with cumulative distribution function $F_r$, and $h$ be a Lipschitz-continuous function. Then there exist two non-negative bounded functions $U^+$ on $(0,+\infty)$, and $U^-$ on $(-\infty,0]$ such that $U^{\pm} \downarrow 0$ as $x\to \pm \infty$, and the following estimates are in order: 
	\begin{enumerate}
		\item[(a)] for $x >0$ it holds that $\Big \vert f'_h (x) \Big \vert \le 2 \Vert h'\Vert_\infty U^+ (x)$,
		\item[(b)] for $x<0$  it holds that $\Big \vert f'_h (x) \Big \vert \le 2 \Vert h'\Vert_\infty U^- (x)$.
	\end{enumerate}
\end{lem}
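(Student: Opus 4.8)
The goal is to establish uniform (in $x$) pointwise bounds on $|f_h'(x)|$ by non-negative bounded functions $U^\pm$ vanishing at $\pm\infty$, with the Lipschitz constant $\|h'\|_\infty$ factored out. The plan is to differentiate the explicit formula for $f_h$ on each of the two regions $x>0$ and $x<0$ separately, and then bound the resulting expressions using the structure of the Gamma density and a recentering trick that removes the (unknown) constant $\E[h(X_r)]$ at the cost of a factor $2$.

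First I would treat the region $x>0$. Starting from $f_h^+(x) = \frac{1}{x p_r(x)}\int_0^x (h(t)-\E[h(X_r)])\,p_r(t)\,dt$, I would differentiate using the product/quotient rule and the identity $p_r'(t)=\big(\frac{r-1}{t}-1\big)p_r(t)$, i.e. the fact that $p_r$ solves $t p_r'(t)+(1-r+t)p_r(t)=0$. This yields an expression for $f_h^{+\prime}(x)$ of the form $\big(1-\frac{r}{x}+1\big)$ times $f_h^+(x)$ plus $\frac{h(x)-\E[h(X_r)]}{x}$ — indeed this is just the Stein equation \eqref{eq:GammSteinEq} rearranged, $f_h'(x)=\frac{h(x)-\E[h(X_r)]-(r-x)f_h(x)}{x}$, so one does not even need to differentiate the integral directly. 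The key step is then to control $f_h(x)$ itself and the difference quotient $\frac{h(x)-\E[h(X_r)]}{x}$. For the latter I would use $\E[h(X_r)]=\int_0^\infty h(t)p_r(t)\,dt$ and write $h(x)-\E[h(X_r)]=\int_0^\infty (h(x)-h(t))p_r(t)\,dt$, so that $|h(x)-\E[h(X_r)]|\le \|h'\|_\infty \int_0^\infty |x-t|p_r(t)\,dt =: \|h'\|_\infty \,m_r(x)$, and similarly $|x\,p_r(x)f_h^+(x)| = |\int_0^x(h(t)-\E[h(X_r)])p_r(t)\,dt| \le \|h'\|_\infty \int_0^x m_r(t)p_r(t)\,dt$. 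Assembling these gives $|f_h^{+\prime}(x)| \le \|h'\|_\infty \cdot U^+(x)$ with an explicit $U^+(x) = \frac{m_r(x)}{x} + \frac{|r-x|}{x^2 p_r(x)}\int_0^x m_r(t)p_r(t)\,dt$ (absorbing the factor $2$ generously). The region $x<0$ is handled identically with $q_l$ in place of $p_r$ and the interval $(x,0)$; here $q_l(t)=-(-t)^{r-1}e^{-t}$ satisfies the same first-order ODE, so the same manipulation of the Stein equation applies, producing $U^-$.

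The main obstacle — and the part requiring genuine care rather than routine computation — is verifying that these candidate functions $U^\pm$ are (i) bounded on their respective half-lines and (ii) decay to $0$ as $x\to\pm\infty$, including continuity/boundedness near the boundary point $0$ where $x p_r(x)\to 0$. For boundedness near $0$: one uses the continuous extension $f_h^\pm(0)=\frac{h(0)-\E[h(X_r)]}{r}$ established in the text, together with the fact that $f_h$ is globally Lipschitz (by \cite{d-p}), so $f_h'$ is a.e. bounded near $0$ and the bound transfers; alternatively a careful Taylor expansion of the numerator $\int_0^x(\cdots)p_r(t)\,dt$ to second order in $x$ shows the apparent singularity cancels. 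For decay at $+\infty$: since $m_r(x)\sim x$ as $x\to\infty$ (the mean-type term), the first piece $m_r(x)/x$ does not by itself vanish, so one must instead estimate $f_h'(x)$ more cleverly for large $x$ — e.g. by exploiting that for $x$ large, $\frac{1}{xp_r(x)}\int_0^x(h(t)-\E[h(X_r)])p_r(t)\,dt$ is dominated by the tail where $p_r$ concentrates, combined with an integration-by-parts in $t$ using $(r-t)p_r(t) = -(tp_r(t))' + p_r(t)$; this is precisely where the exponential weight $e^{-t}$ forces decay. I would phrase this by writing $x f_h(x) = \int_0^x (h(t)-\E[h(X_r)])\frac{p_r(t)}{p_r(x)}\,dt$ and noting $p_r(t)/p_r(x)\le$ something integrable and small for $t$ away from $x$, while near $t=x$ the integrand is $O(\|h'\|_\infty|x-t|)$; the resulting bound is $o(x)$, giving $U^\pm\to 0$.

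Finally I would record the qualitative conclusion: $U^+$ is defined, continuous, non-negative and bounded on $(0,\infty)$ with $U^+(x)\downarrow 0$, and symmetrically for $U^-$ on $(-\infty,0]$, which completes the proof. It is worth remarking, as the text notes, that the analogous statement for $S(h)$ associated with the centered Gamma Stein equation \eqref{eq:CenteredGamma-stein-eq} follows from the substitution $S(h)(x)=\tfrac12 f_{h_1}\big(\tfrac{x+\nu}{2}\big)$ with $h_1(x)=h(2x-\nu)$ and $\|h_1'\|_\infty = 2\|h'\|_\infty$, simply by rescaling the functions $U^\pm$ accordingly; one obtains $|S(h)'(x)|\le 2\|h'\|_\infty \widetilde U^\pm(x)$ with $\widetilde U^\pm(x) = \tfrac12 U^\pm\big(\tfrac{x+\nu}{2}\big)$, still non-negative, bounded and vanishing at $\pm\infty$.
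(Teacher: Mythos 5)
There is a genuine gap, and it sits exactly at the heart of the lemma: producing a bound that is \emph{bounded} and \emph{vanishes at infinity}. Your explicit candidate
\[
U^+(x) \;=\; \frac{m_r(x)}{x} \;+\; \frac{\abs{r-x}}{x^2\,p_r(x)}\int_0^x m_r(t)\,p_r(t)\,dt
\]
fails both requirements. As $x\to\infty$ the first term tends to $1$ (since $m_r(x)\sim x$), and the second term blows up exponentially: $\int_0^x m_r(t)p_r(t)\,dt$ converges to a positive constant while $\frac{\abs{r-x}}{x^2 p_r(x)}\sim \Gamma(r)\,e^{x}x^{-r-1}$. As $x\to 0^+$ each of the two terms separately behaves like $r/x$. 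The underlying reason is that splitting the Stein equation $f_h'(x)=\frac{h(x)-\E[h(X_r)]-(r-x)f_h(x)}{x}$ by the triangle inequality destroys the cancellation between $h(x)-\E[h(X_r)]$ and $(r-x)f_h(x)$; each piece is of order $1/(x\,p_r(x))$, i.e.\ exponentially large, and only their difference is small. You do recognize the problem, but the proposed repair is only gestured at, and the key claim in it is backwards: on the range of integration $t\in(0,x)$ one has $p_r(t)/p_r(x)=(t/x)^{r-1}e^{x-t}$, which is exponentially \emph{large} for $t$ away from $x$, not ``integrable and small.'' To make your route work you would have to switch to the tail representation $x\,p_r(x)f_h(x)=-\int_x^\infty\bigl(h(t)-\E[h(X_r)]\bigr)p_r(t)\,dt$ (using that the full integral vanishes) and then carry out the cancellation analysis explicitly --- which is a substantial piece of work, not a routine verification.

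The paper does none of this from scratch. It quotes the known sharp estimate $\abs{f_h'(x)}\le 2\Vert h'\Vert_\infty V^{\pm}(x)$ with
$V^+(x)=\frac{\int_0^x F_r(y)\,dy\,\int_x^\infty(1-F_r(y))\,dy}{x^2 p_r(x)}$ (D\"obler, and D\"obler--Peccati relation (35)), in which the cancellation has already been performed, and then simply dominates $V^+$ for $x>r$ by the monotone function $U^+(x)=\frac{\int_x^\infty(1-F_r(y))\,dy}{x\,p_r(x)}\le 1$, which visibly decreases to $0$ (it is asymptotic to $1/x$). So the two approaches differ fundamentally: the paper's proof is a citation plus an elementary domination, whereas yours attempts a self-contained derivation whose central analytic step --- the decay of $f_h'$ at infinity --- is precisely the part left unproved. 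Your final remark about transferring the statement to $S(h)$ via the affine substitution is correct and matches the paper's comment preceding the lemma.
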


\begin{proof}
	Let $Q_l(x):= \int_{x}^{0} (- q_l(y))dy$. Consider 
	\begin{align*}
	V^+ (x) := \frac{\int_{0}^{x}  F_r (y) dy   \int_{x}^{\infty}   \left( 1 - F_r (y)\right) dy}{x^2 p_r (x)}, \text{ and }	V^- (x) := \frac{(r-x) \int_{x}^{0} Q_l (y) dy}{-x^2 q_l (x)}.
	\end{align*}
	It is known that both estimates in parts (a) and (b) take place with $V^{\pm}$ instead of $U^{\pm}$ (see  \cite[Corollary 3.15. Part (b)]{Christian_Gamma_Stein}, and \cite[relation (35), page 4304]{d-p}). Moreover, for $x > r$, the function $V^+$ satisfies 
	$$0 \le  V^+ (x) \le U^+ (x):= \frac{ \int_{x}^{\infty}   \left( 1 - F_r (y)\right) dy}{x p_r (x)} \le 1.$$ Also, it is straightforward to check that as $x \to +\infty$, the function $U^+$ is decreasing to $0$. (It is also true that $ 0 \le U^+ (x) \le 1$ for $0 < x \le r$ \cite[see the top of page 3403]{d-p}). Part (b) is similar.
\end{proof}

\subsection{An Operator Theory Approach}\

Let $a,b \in \R^+ \cup \{ \infty \}$. Define
\[
\mathcal{B}_{a,b} := \Big\{ f : \R \to \R, \text{ Lipschitz-continuous} : \norm{f}_{\infty} < a , \text{ and } \norm{f'}_{\infty} < b \Big\}.
\]

\begin{lem}\label{lem:Being-Banach}
Let $\mathcal{B}:= \mathcal{B}_{\infty,\infty}$. For every given  $h \in \mathcal{B}$, define $\norm{f}_{\mathcal{B}}:= \norm{f}_{\infty} + \norm{f'}_{\infty}$. Then $\Vert \cdot \Vert_{\mathcal{B}}$ is a norm on the real vector space $\mathcal{B}$, and furthermore the pair $\left( \mathcal{B}, \Vert \cdot \Vert_{\mathcal{B}}\right)$ is a Banach space, the so-called Lipschitz-space.
\end{lem}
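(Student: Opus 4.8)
The plan is to dispatch the vector-space and norm axioms quickly — they are immediate from \eqref{eq:SmallestLipschitzConstant} — and then to concentrate all the effort on completeness, which is the only substantive point. First I would check that $\mathcal{B}$ is a linear subspace of the space of all real functions on $\R$: if $f,g\in\mathcal{B}$ and $\lambda\in\R$, then $\norm{f+g}_\infty\le\norm{f}_\infty+\norm{g}_\infty<\infty$, while directly from \eqref{eq:SmallestLipschitzConstant} one gets $\norm{(f+g)'}_\infty\le\norm{f'}_\infty+\norm{g'}_\infty<\infty$ and $\norm{(\lambda f)'}_\infty=\abs{\lambda}\,\norm{f'}_\infty$. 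The same two sub-additivity and homogeneity estimates show that $\norm{\cdot}_{\mathcal{B}}=\norm{\cdot}_\infty+\norm{(\cdot)'}_\infty$ is sub-additive and absolutely homogeneous, and $\norm{f}_{\mathcal{B}}=0$ forces $\norm{f}_\infty=0$, hence $f\equiv 0$; so $\norm{\cdot}_{\mathcal{B}}$ is indeed a norm. (It is worth stressing that here $\norm{f'}_\infty$ is the Lipschitz seminorm of \eqref{eq:SmallestLipschitzConstant}, not literally a derivative, so no differentiability issue intervenes.)

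For completeness, let $(f_n)_{n\ge 1}$ be Cauchy in $(\mathcal{B},\norm{\cdot}_{\mathcal{B}})$. Since $\norm{f_n-f_m}_\infty\le\norm{f_n-f_m}_{\mathcal{B}}$, the sequence is uniformly Cauchy, hence converges uniformly to a bounded (therefore continuous) function $f$; in particular $f_n(x)\to f(x)$ for every $x\in\R$. It remains to check that $f\in\mathcal{B}$ and that $\norm{f_n-f}_{\mathcal{B}}\to 0$, and for this I only have to control the Lipschitz part. Given $\epsilon>0$, pick $N$ with $\norm{(f_n-f_m)'}_\infty<\epsilon$ for all $n,m\ge N$. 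Then for every $x\ne y$ and every $n\ge N$, passing to the limit $m\to\infty$ inside the difference quotient (legitimate by pointwise convergence),
\[
\frac{\abs{(f_n-f)(x)-(f_n-f)(y)}}{\abs{x-y}}=\lim_{m\to\infty}\frac{\abs{(f_n-f_m)(x)-(f_n-f_m)(y)}}{\abs{x-y}}\le\epsilon .
\]
Taking the supremum over $x\ne y$ yields $\norm{(f_n-f)'}_\infty\le\epsilon$ for all $n\ge N$. In particular $\norm{f'}_\infty\le\norm{f_N'}_\infty+\epsilon<\infty$, so $f\in\mathcal{B}$; combining this with $\norm{f_n-f}_\infty\to 0$ gives $\norm{f_n-f}_{\mathcal{B}}\to 0$. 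Thus every Cauchy sequence in $\mathcal{B}$ converges, and $(\mathcal{B},\norm{\cdot}_{\mathcal{B}})$ is a Banach space.

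I do not anticipate a genuine obstacle here: this is the classical argument that the Lipschitz space is complete. The one place deserving a word of care is precisely that the ``derivative'' appearing in the norm is the Lipschitz seminorm \eqref{eq:SmallestLipschitzConstant}, so the limit is taken inside difference quotients rather than by interchanging limits with differentiation — which is exactly what makes the proof short and avoids any appeal to Arzel\`a--Ascoli or to convergence of classical derivatives.
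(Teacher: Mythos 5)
Your proof is correct. The paper itself does not argue this lemma at all: it simply records that checking the norm axioms is straightforward and defers completeness to the classical reference (Proposition 6.1.2 of Weaver's \emph{Lipschitz algebras}), so your write-out of the standard argument --- uniform convergence from the $\norm{\cdot}_\infty$ part, then passing to the limit $m\to\infty$ inside the difference quotients of \eqref{eq:SmallestLipschitzConstant} to control the Lipschitz seminorm --- is exactly the proof being invoked, just made self-contained.
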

\begin{proof}
It is straightforward to see that the pair $\left( \mathcal{B}, \Vert \cdot \Vert_{\mathcal{B}}\right)$ is a normed space. Furthermore, it is a classical  fact that it is a Banach space, see for example \cite[Proposition 6.1.2]{LipAlg}.
\end{proof}

\begin{lem}\label{lem:being-linear-bounded}
Consider the mapping $S: \mathcal{B} \to \mathcal{B}$ such that for every $h \in \mathcal{B}$, the action $S(h)$ is defined as the unique bounded solution to the centered Gamma Stein equation \eqref{eq:CenteredGamma-stein-eq}, which is guaranteed to exist by Theorem \ref{thm:Gamma-Stein-Solution-Properties} item (a). Then $S(h) \in \mathcal{B}$, and $S$ is a bounded linear operator from the Banach space $\mathcal{B}$ to itself. 	
	\end{lem}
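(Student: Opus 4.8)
The plan is to verify linearity first, then boundedness, using the estimates from Theorem~\ref{thm:Gamma-Stein-Solution-Properties}~(a) as the workhorse. Linearity is essentially formal: if $h_1,h_2 \in \mathcal{B}$ and $\alpha,\beta \in \R$, then $\alpha S(h_1) + \beta S(h_2)$ is a Lipschitz-continuous solution of the Stein equation \eqref{eq:CenteredGamma-stein-eq} associated to the test function $\alpha h_1 + \beta h_2$, because the left-hand operator $f \mapsto 2(x+\nu)f'(x) - xf(x)$ is linear and the right-hand side depends linearly on $h$ (note $\E[(\alpha h_1 + \beta h_2)(G(\nu))] = \alpha \E[h_1(G(\nu))] + \beta \E[h_2(G(\nu))]$). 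Moreover $\alpha S(h_1) + \beta S(h_2)$ is bounded, being a finite linear combination of bounded functions. Since Theorem~\ref{thm:Gamma-Stein-Solution-Properties}~(a) asserts that the bounded Lipschitz solution is \emph{unique}, it must coincide with $S(\alpha h_1 + \beta h_2)$, which gives $S(\alpha h_1 + \beta h_2) = \alpha S(h_1) + \beta S(h_2)$.

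Next I would check that $S$ maps $\mathcal{B}$ into $\mathcal{B}$ and is bounded. By Theorem~\ref{thm:Gamma-Stein-Solution-Properties}~(a), for any $h \in \mathcal{B}$ the solution $S(h)$ satisfies $\norm{S(h)}_\infty \le \norm{h'}_\infty < \infty$ and $\norm{S(h)'}_\infty \le c_\nu \norm{h'}_\infty < \infty$, where $c_\nu = \max\{1, 2/\nu\}$; in particular $S(h)$ is bounded and Lipschitz-continuous, so $S(h) \in \mathcal{B}$. Adding the two bounds yields
\[
\norm{S(h)}_{\mathcal{B}} = \norm{S(h)}_\infty + \norm{S(h)'}_\infty \le (1 + c_\nu)\,\norm{h'}_\infty \le (1+c_\nu)\,\norm{h}_{\mathcal{B}},
\]
since $\norm{h'}_\infty \le \norm{h}_\infty + \norm{h'}_\infty = \norm{h}_{\mathcal{B}}$. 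Hence $S$ is a bounded linear operator with operator norm at most $1 + c_\nu$.

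There is no serious obstacle here; the only points requiring a word of care are that the Stein equation genuinely has a unique \emph{bounded} Lipschitz solution (so that the candidate linear combination really is $S$ of the sum, rather than merely \emph{a} solution) — which is exactly the content of Theorem~\ref{thm:Gamma-Stein-Solution-Properties}~(a) — and that one should state clearly which norm on $\mathcal{B}$ is being used, namely the Lipschitz-space norm $\norm{\cdot}_{\mathcal{B}}$ introduced in Lemma~\ref{lem:Being-Banach}. One might also remark that the constant $1+c_\nu$ is not claimed to be optimal; only finiteness matters for the operator-theoretic arguments that follow in Section~\ref{sec:operator-theory}.
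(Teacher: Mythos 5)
Your proposal is correct and follows essentially the same route as the paper: linearity via the uniqueness of the bounded Lipschitz solution from Theorem~\ref{thm:Gamma-Stein-Solution-Properties}~(a), and boundedness via the two estimates of that theorem combined into $\norm{S(h)}_{\mathcal{B}} \le (1+c_\nu)\norm{h}_{\mathcal{B}}$. Your write-up merely makes the uniqueness argument for linearity slightly more explicit than the paper does.
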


\begin{proof}
Let $h \in \mathcal{B}$. Then a direct application of Theorem \ref{thm:Gamma-Stein-Solution-Properties} item (a) yields that $S(h) \in \mathcal{B}$. To show linearity of $S$, take $h_1,h_2 \in \mathcal{B}$, and $\alpha \in \R$.	Then using the Gamma Stein equation \eqref{eq:CenteredGamma-stein-eq}, together with the fact that $S(h)$ is the unique bounded solution to the latter, we infer that $S(h_1 + \alpha h_2)= S(h_1) + \alpha S(h_2)$. For the boundedness of $S: \mathcal{B} \to \mathcal{B}$ we apply Theorem \ref{thm:Gamma-Stein-Solution-Properties} part (a) to obtain
\begin{align*}
\norm{S(h)}_{\mathcal{B}} = \norm{S(h)}_{\infty} + \norm{S(h)'}_{\infty} \leq \norm{h'}_{\infty} + c_\nu \norm{h'}_{\infty} & \leq (1+ c_\nu) \big( \norm{h}_{\infty} + \norm{h'}_{\infty} \big) \\
& = (1+c_\nu) \norm{h}_{\mathcal{B}}.
\end{align*}
Hence $\Vert S \Vert \le 1+ c_\nu.$
\end{proof}

\begin{prop}\label{prop:no-eigenvalue}
Consider the bounded linear operator $S: \mathcal{B} \to \mathcal{B}$ defined as in Lemma \ref{lem:being-linear-bounded}. Then the following statements are in order.
\begin{enumerate}
 \item[(a)] The operator $S$ does not admit any non-zero eigenvalue, i.e. if $S(h) = \lambda h$ for some non-zero constant $\lambda \in \R$, then necessary $h =0$.
 \item[(b)] For every non-zero scalar $\lambda \in \R$, the operator $I+\lambda S : \mathcal{B} \to \mathcal{B}$ is a one to one map, where $I:\mathcal{B} \to \mathcal{B}$ stands for the identity operator.   
 \end{enumerate}
\end{prop}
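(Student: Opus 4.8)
The plan is to derive (b) from (a) for free and then to prove (a) by turning the eigenvalue identity into an explicit first order linear ODE with a regular singular point at $x=-\nu$ and analysing its bounded Lipschitz solutions.

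\emph{Reduction of (b) to (a).} If $(I+\lambda S)h=0$ with $\lambda\neq 0$, then $S(h)=-\lambda^{-1}h$, so $-\lambda^{-1}$ — which is a nonzero scalar — is an eigenvalue of $S$ in the sense of part (a); hence $h=0$ by (a), i.e. $I+\lambda S$ is one to one. So the entire content is in (a).

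\emph{Proof of (a).} Assume $h\in\mathcal{B}$ and $S(h)=\lambda h$ for some $\lambda\neq 0$; in particular $h$ is bounded (Theorem \ref{thm:Gamma-Stein-Solution-Properties}(a) gives $\|S(h)\|_\infty\le\|h'\|_\infty<\infty$) and Lipschitz. Substituting $S(h)=\lambda h$ into the centered Gamma Stein equation \eqref{eq:CenteredGamma-stein-eq} yields, for $x\neq -\nu$,
\[
2\lambda(x+\nu)h'(x)=(\lambda x+1)h(x)-c,\qquad c:=\E[h(G(\nu))],
\]
and a bootstrap of this identity makes $h$ smooth on $\R\setminus\{-\nu\}$, so it is a classical solution there. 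I would integrate this ODE separately on $(-\nu,\infty)$ and on $(-\infty,-\nu)$ using the integrating factor $M(x)=|x+\nu|^{(\nu-1/\lambda)/2}e^{-x/2}$. On $(-\nu,\infty)$ the homogeneous branch $1/M$ blows up like $e^{x/2}$ at $+\infty$, so boundedness of $h$ selects the particular branch and gives $h=c\,\phi_\lambda$ there, with $\phi_\lambda(x):=\tfrac{1}{2\lambda M(x)}\int_x^\infty\tfrac{M(t)}{t+\nu}\,dt$. Since $G(\nu)$ is supported in $(-\nu,\infty)$, the self-consistency relation $c=\E[h(G(\nu))]$ becomes $c\bigl(1-\E[\phi_\lambda(G(\nu))]\bigr)=0$.

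The decisive part — and the one I expect to be the main obstacle — is the simultaneous use of (i) the behaviour of the solution at the regular singular point $x=-\nu$, (ii) boundedness of $h$ on all of $\R$, and (iii) the \emph{global} Lipschitz bound $\|h'\|_\infty=|\lambda|^{-1}\|S(h)'\|_\infty<\infty$. For $\lambda$ outside the range $(0,1/\nu]$ one checks that $\phi_\lambda(x)\to\pm\infty$ as $x\downarrow -\nu$; boundedness of $h$ then forces $c=0$, and the corresponding blow-up of the homogeneous solution as $x\uparrow -\nu$ — which would contradict either boundedness or the Lipschitz bound — forces $h\equiv 0$ on $(-\infty,-\nu)$ as well, so $h\equiv 0$. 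In the remaining regime $\lambda\in(0,1/\nu]$, where $\phi_\lambda$ extends continuously to $-\nu$ and the consistency relation no longer rules out $c\neq 0$ by itself, the naive argument breaks down; here I would instead bring in the explicit representation \eqref{eq:ExplicitRepresentationOfSolution} of $S(h)$ for $x<-\nu$, together with the boundary value \eqref{eq:solution-boundary-value} and the decay estimates of Lemma \ref{rem:derivative-bounds}, in order to control $S(h)'$ uniformly across $-\nu$ and thereby exclude any nonzero bounded Lipschitz solution glued at the singular point. Putting the cases together should give $c=0$ and $h\equiv 0$, which proves (a), and then (b).
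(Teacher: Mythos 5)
Your reduction of (b) to (a), the substitution of $S(h)=\lambda h$ into \eqref{eq:CenteredGamma-stein-eq}, and the integrating-factor analysis are all sound, and your treatment of $\lambda<0$ and $\lambda>1/\nu$ (where $\phi_\lambda$ blows up at $-\nu^{+}$, forcing $c=0$, and the homogeneous branches are then killed at $+\infty$ and at $-\nu^{-}$) does prove non-existence of eigenvalues in that range; this is essentially the paper's route (the paper first forces $h(-\nu)=0$ via an auxiliary non-homogeneous ODE, then deduces $c=\E[h(G(\nu))]=0$ from \eqref{eq:solution-boundary-value}, then kills the homogeneous solutions on both half-lines). The genuine gap is the regime $\lambda\in(0,1/\nu]$: there you only say that you ``would bring in'' the explicit representation and Lemma \ref{rem:derivative-bounds} and that this ``should give'' $c=0$ and $h\equiv 0$. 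That is not an argument, and no argument of this kind can succeed, because in part of that regime the statement itself fails. For $0<\lambda\le\frac{1}{\nu+2}$ put $\beta:=\frac{1-\lambda\nu}{2\lambda}\ge 1$ and
\begin{equation*}
h_\lambda(x):=\big(-(x+\nu)\big)^{\beta}\,e^{x/2}\,\ind{x<-\nu}(x).
\end{equation*}
Then $h_\lambda$ is bounded and Lipschitz, so $h_\lambda\in\mathcal{B}$, and $\E[h_\lambda(G(\nu))]=0$ since $G(\nu)$ is supported in $[-\nu,\infty)$. For $x<-\nu$ one has $h_\lambda'(x)=h_\lambda(x)\big(\tfrac{\beta}{x+\nu}+\tfrac12\big)$, whence
\begin{equation*}
2(x+\nu)\,\lambda h_\lambda'(x)-x\,\lambda h_\lambda(x)=\lambda(2\beta+\nu)\,h_\lambda(x)=h_\lambda(x)=h_\lambda(x)-\E[h_\lambda(G(\nu))],
\end{equation*}
because $\lambda(2\beta+\nu)=1$; on $(-\nu,\infty)$ both sides vanish. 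So $\lambda h_\lambda$ is a bounded Lipschitz solution of \eqref{eq:CenteredGamma-stein-eq} with test function $h_\lambda$, and the uniqueness in Theorem \ref{thm:Gamma-Stein-Solution-Properties}(a) (or a direct computation with \eqref{eq:ExplicitRepresentationOfSolution}) gives $S(h_\lambda)=\lambda h_\lambda$ with $h_\lambda\neq 0$.

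So the step you deferred is not a technicality: eigenfunctions supported in $(-\infty,-\nu)$ exist precisely because the homogeneous solution $C\,e^{x/2}\big(-(x+\nu)\big)^{\beta}$ on $(-\infty,-\nu)$ is bounded and Lipschitz once $\beta\ge 1$. Your own framework actually exposes this: after you conclude $c=0$ and $h\equiv 0$ on $(-\nu,\infty)$, the left half-line still carries this nontrivial kernel. (For comparison, the paper's proof disposes of the left half-line by writing the homogeneous solution there as $C_2e^{-x/2}(-x-\nu)^{\cdots}$ and declaring it unbounded; the correct exponential factor is $e^{+x/2}$, which is exactly where the above family slips through.) To salvage what the rest of the paper needs, note that only $\lambda=2$ (and more generally $\lambda>0$, i.e.\ eigenvalues $-1/\lambda<0$ of $S$) enters Theorem \ref{thm:fredholm-alternative} as used in Theorem \ref{thm:MainMalliavinSteinBound}; your argument for $\lambda<0$ is complete and suffices there. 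But as a proof of Proposition \ref{prop:no-eigenvalue} as stated, the proposal is incomplete, and the missing case cannot be filled in.
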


\begin{proof}
(a) By contrary assume that there exists a non-zero scalar $\lambda \in \R$ such that
\begin{equation}\label{eq:eigenvalue}
 S(h) = \lambda h.
 \end{equation}
 We claim that $h(-\nu) =0$. Otherwise introduce the auxiliary test function $g = \frac{h}{h(-\nu)} -1$. Then, obviously, $g \in \mathcal{B}$, and moreover by virtue of relation \eqref{eq:eigenvalue}, we have $S(g) = \lambda (g+1)$.  Furthermore, we have $\E \left[  g (G(\nu))\right]=- \lambda \nu$, because $S(g)(-\nu) =\lambda$.  Therefore, the function $g$ satisfies the first order non-homogeneous ode 
 \begin{equation}\label{eq:ode2-no-eigenvalue}
 2 \lambda (x+\nu) g' - (\lambda x + 1) g = \lambda (x+\nu).
 \end{equation}
 Then general solutions of the ode \eqref{eq:ode2-no-eigenvalue} on the interval $(-\nu,\infty)$ are given by
 \begin{equation}\label{eq:ode2-solution+}
 g(x) =  e^{\frac{x}{2}} (x+\nu) ^{\beta} \Big \{ C_3 + \frac{1}{2}  \int_{-\nu}^{x}   e^{-\frac{y}{2}} (y+\nu) ^{-\beta} dy \Big \},
 \end{equation}
 where $\beta:= \frac{1-\lambda \nu}{2 \lambda}$. Now, if $\beta < 1$, then as $x \to +\infty$, we have $$ \int_{-\nu}^{x}  e^{-\frac{y}{2}} (y+\nu) ^{-\beta} dy \to c_\beta < \infty.$$ This implies that $g(x) \to +\infty$ as $x \to +\infty$, which is a contradiction to the fact that $g$ must be a bound function. When $\beta \ge 1$, i.e. $\tilde{\beta}:= 1 - \beta \le 0$ as $x \to +\infty$, we obtain that for some finite constant $d_\beta$ that  
 \begin{equation*}
 \int_{-\nu}^{x}  e^{-\frac{y}{2}} (y+\nu) ^{-\beta} dy \to d_\beta \Gamma(\tilde{\beta}),
 \end{equation*}
 which is either an infinite number or a finite number depending on whether $\tilde{\beta} \in -\N \cup \{0\}$ is a negative integer or not. Therefore, in any case, we have obtained that $g(x) \to +\infty$ as $x \to +\infty$, which is a contradiction. Hence always $h(-\nu) =0$. This implies that $\E \left[ h (G(\nu))\right]=0$ by using \eqref{eq:solution-boundary-value}. On the other hand, $S(h) = \lambda h$ satisfies the first order ode \eqref{eq:CenteredGamma-stein-eq}, and therefore 
\begin{equation}\label{eq:ode1-no-eigenvlaue}
2 \lambda ( x+ \nu) h' - (\lambda x +1) h =0. 
\end{equation} 
The general solutions of the ordinary differential equation \eqref{eq:ode1-no-eigenvlaue} on the interval $(-\nu,\infty)$ are given by 
\begin{equation}\label{eq:ode1-solution+}
h(x) = C_1 e^{\frac{x}{2}} (x+\nu) ^{\frac{1-\nu}{2\lambda}},
\end{equation}
for some constant $C_1$. If $C_1 \neq 0$, then this is a contradiction to the fact that $S(h)$ is a bounded function over the whole real line. Hence it must hold that $C_1=0$. Similarly, the general solutions of the ordinary differential equation \eqref{eq:ode1-no-eigenvlaue} on the interval $(-\infty, -\nu)$ are given by 
\begin{equation}\label{eq:ode1-solution-}
h(x) = C_2 e^{-\frac{x}{2}} (-x-\nu) ^{\frac{1-\nu}{2\lambda}}
\end{equation}
where $C_2$ is a general constant. Now if $C_2 \neq 0$, we infer that $S(h)$ is unbounded on the domain $(-\infty,-\nu)$, which leads to a contradiction. Therefore $C_2 =0$, and as a direct consequence we get $h=0$.

(b) Assume that $\lambda \neq 0$ is a non-zero scalar. Then the mapping $I + \lambda S : \mathcal{B} \to \mathcal{B}$ is a linear operator. Hence, $I + \lambda S$ is a one to one map if and only if  $\Ker\left( I+ \lambda  S \right) = \{0 \}$, and the latter follows at once from part (a).
\end{proof}

\begin{lem}\label{lem:basic-lemma}
Let $ f_n: [a,b] \to \R$ be a sequence of $L$-Lipschitz continuous functions for every $n \in \N$: i.e. for all $x,y \in [a,b]$, and every $n$, $$\Big \vert f_n (x) - f_n (y) \Big \vert \le L \vert x -y \vert.$$ Assume further that $f_n \to f$ pointwise as $n $ tends to infinity. Then $f$ is also an $L$-Lipschitz function and $f_n \to f$ uniformly.	
	\end{lem}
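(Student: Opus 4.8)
The plan is to verify the two assertions in turn, starting with the Lipschitz property of the limit, which is immediate, and then deducing uniform convergence from equi-Lipschitzness plus pointwise convergence. For the first claim: fix $x,y \in [a,b]$. For every $n$ we have $\abs{f_n(x)-f_n(y)} \le L\abs{x-y}$, and letting $n \to \infty$ and using $f_n(x) \to f(x)$, $f_n(y) \to f(y)$ together with continuity of $\abs{\cdot}$ gives $\abs{f(x)-f(y)} \le L\abs{x-y}$. Hence $f$ is $L$-Lipschitz, in particular continuous on $[a,b]$.

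For the uniform convergence, the idea is the standard $\varepsilon/3$ argument exploiting that $[a,b]$ is compact. Fix $\varepsilon>0$. Partition $[a,b]$ by finitely many points $a = t_0 < t_1 < \dots < t_m = b$ with mesh $\max_j (t_{j+1}-t_j) < \varepsilon/(3L)$ (if $L=0$ the functions are constant and the statement is trivial, so assume $L>0$). By pointwise convergence at each of the finitely many nodes $t_j$, choose $N$ so large that $\abs{f_n(t_j)-f(t_j)} < \varepsilon/3$ for all $n \ge N$ and all $j=0,\dots,m$. Now for arbitrary $x \in [a,b]$, pick a node $t_j$ with $\abs{x-t_j} < \varepsilon/(3L)$; then for $n \ge N$,
\[
\abs{f_n(x)-f(x)} \le \abs{f_n(x)-f_n(t_j)} + \abs{f_n(t_j)-f(t_j)} + \abs{f(t_j)-f(x)} \le L\abs{x-t_j} + \frac{\varepsilon}{3} + L\abs{t_j-x} < \varepsilon,
\]
where the first and third terms are bounded using the $L$-Lipschitz property of $f_n$ and of $f$ respectively. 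Since $N$ does not depend on $x$, this shows $\sup_{x\in[a,b]}\abs{f_n(x)-f(x)} \le \varepsilon$ for all $n \ge N$, i.e. $f_n \to f$ uniformly.

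There is no real obstacle here; the only point requiring a little care is that the bound $\varepsilon/(3L)$ and the choice of $N$ must be made uniformly over $x$, which is exactly why compactness of $[a,b]$ (finiteness of the partition) is used, and why the common Lipschitz constant $L$ is essential — it simultaneously controls the oscillation of every $f_n$ and, via the first part, that of $f$. One could alternatively phrase this as: the family $\{f_n\}$ is uniformly equicontinuous and pointwise convergent on a compact set, hence uniformly convergent, but the direct $\varepsilon/3$ argument above is self-contained and short.
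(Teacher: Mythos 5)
Your proof is correct and complete: the limit's Lipschitz bound follows by passing to the limit in the pointwise inequality, and the $\varepsilon/3$ argument over a finite mesh of $[a,b]$ is exactly the standard elementary argument the paper has in mind (the paper simply states ``It is elementary'' and omits it). The edge case $L=0$ and the uniformity of $N$ in $x$ are both handled properly, so nothing is missing.
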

\begin{proof}
It is elementary.
\end{proof}

\begin{prop}\label{prop:being-compact-operator}
The bounded linear operator $S: \mathcal{B} \to \mathcal{B}$ defined as in Lemma \ref{lem:being-linear-bounded} is a compact operator. 
\end{prop}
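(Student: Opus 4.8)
The plan is to show that $S$ maps bounded sequences to sequences possessing uniformly convergent subsequences, exploiting the explicit representation \eqref{eq:ExplicitRepresentationOfSolution} together with the decay estimates from Lemma \ref{rem:derivative-bounds}. Let $(h_n)$ be a sequence in $\mathcal{B}$ with $\norm{h_n}_{\mathcal{B}} \le 1$, so in particular $\norm{h_n'}_\infty \le 1$ and $\norm{h_n}_\infty \le 1$ for all $n$. By Theorem \ref{thm:Gamma-Stein-Solution-Properties}(a), the functions $S(h_n)$ satisfy $\norm{S(h_n)}_\infty \le 1$ and $\norm{S(h_n)'}_\infty \le c_\nu$ uniformly in $n$; hence the families $\{S(h_n)\}$ and $\{S(h_n)'\}$ are each uniformly bounded and equi-Lipschitz, so by Arzel\`a--Ascoli (applied on each compact interval together with a diagonal argument over an exhaustion of $\R$) there is a subsequence, still denoted $(h_n)$, along which both $S(h_n) \to g$ and $S(h_n)' \to g_1$ uniformly on compacts, with $g$ bounded and $1$-Lipschitz and $g_1$ bounded. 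Since uniform-on-compacts convergence of a sequence of functions together with uniform-on-compacts convergence of their derivatives forces $g' = g_1$, we get $g \in \mathcal{B}$.

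The crux is upgrading compact convergence to convergence in the $\mathcal{B}$-norm, i.e. to \emph{uniform} convergence of $S(h_n) - g$ and of $S(h_n)' - g_1$ on all of $\R$. This is exactly where the decay of the Stein solutions at $\pm\infty$ enters. First I would use \eqref{eq:CenteredGamma-stein-eq}: since $S(h_n)$ solves $2(x+\nu)S(h_n)'(x) - x S(h_n)(x) = h_n(x) - \E[h_n(G(\nu))]$, knowing $\norm{S(h_n)}_\infty \le 1$ and $\norm{S(h_n)'}_\infty \le c_\nu$ shows that $|h_n(x) - \E[h_n(G(\nu))]|$ is controlled, but more usefully the ODE lets one express $S(h_n)'$ in terms of $S(h_n)$ and $h_n$ away from $x = -\nu$; combined with the representation \eqref{eq:ExplicitRepresentationOfSolution} this gives pointwise bounds $|S(h_n)(x)|$ and $|S(h_n)'(x)|$ by $\norm{h_n'}_\infty$ times the auxiliary functions $U^\pm$ (the centered-Gamma analogue alluded to after Lemma \ref{rem:derivative-bounds}), which tend to $0$ as $x \to \pm\infty$ uniformly in $n$ because $\norm{h_n'}_\infty \le 1$. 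Concretely: given $\epsilon > 0$, pick $R$ so large that $U^\pm(x) < \epsilon$ for $|x| > R$; then $|S(h_n)(x) - g(x)| \le |S(h_n)(x)| + |g(x)| \le 2\epsilon$ (and similarly for the derivatives, noting $g$ and $g_1$ inherit the same decay as pointwise limits) for all $|x|>R$ and all $n$, while on the compact set $[-R,R]$ uniform convergence is already in hand; a standard $\epsilon/3$-type splitting then yields $\norm{S(h_n) - g}_\infty \to 0$ and $\norm{S(h_n)' - g_1}_\infty \to 0$, i.e. $\norm{S(h_n) - g}_{\mathcal{B}} \to 0$.

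Thus every bounded sequence in $\mathcal{B}$ has a subsequence whose image under $S$ converges in $\mathcal{B}$, which is precisely compactness of $S$. The main obstacle, as indicated, is the second paragraph: making rigorous the uniform-in-$n$ decay of $S(h_n)$ and $S(h_n)'$ near $\pm\infty$. This requires carefully transcribing the $\Gamma(r,1)$ estimates of Lemma \ref{rem:derivative-bounds} (and the bound $|f_h(x)| \le \text{const}\cdot\norm{h'}_\infty U^\pm(x)$ implicit in the proof via $V^\pm$) through the affine change of variables $x \mapsto \tfrac{x+\nu}{2}$ that links \eqref{eq:GammSteinEq} to \eqref{eq:CenteredGamma-stein-eq}, and checking that the resulting bounds on $S(h)$ itself (not just $S(h)'$) also decay — for $S(h)'$ one combines the decay of $S(h)$ with the ODE, while for $S(h)$ one argues directly from \eqref{eq:ExplicitRepresentationOfSolution} that the weighted integral against $\hat p_\nu$ (resp. $\hat q$) divided by $(x+\nu)\hat p_\nu(x)$ is $O(\norm{h'}_\infty)$ times a function vanishing at infinity, exactly as in the normal-approximation analogue. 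Everything else is Arzel\`a--Ascoli plus the routine fact that uniform-on-compacts limits of $C^1$ functions with uniformly-on-compacts-convergent derivatives are $C^1$.
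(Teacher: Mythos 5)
Your proof is correct in its overall architecture and lands on the same two pillars as the paper's -- extraction of a subsequence converging locally, followed by the equivanishing-at-infinity of $S(h_n)$ and $S(h_n)'$ to upgrade local to global uniform convergence -- but the extraction step is done by a genuinely different route. You apply Arzel\`a--Ascoli (with a diagonal argument) directly to the image families $\{S(h_n)\}$ and $\{S(h_n)'\}$ and do not attempt to identify the limit as $S(h)$ for some $h$; this is legitimate, since precompactness of $S(U_{\mathcal B})$ only requires limit points in $\mathcal B$, and it lets you bypass all functional-analytic machinery. The paper instead extracts convergence of the \emph{test functions}: it uses that $\mathcal B$ is the dual of the Arens--Eells space, so by Banach--Alaoglu and separability the unit ball $U_{\mathcal B}$ is weak-$^*$ sequentially compact and weak-$^*$ convergence on bounded sets is pointwise convergence; then $S(h_{n_k})\to S(h)$ pointwise by dominated convergence in the explicit representation, and $S(h_{n_k})'\to S(h)'$ pointwise via the Stein ODE. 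What the paper's route buys is that pointwise convergence of the derivatives comes for free from the ODE once $h_{n_k}$ and $S(h_{n_k})$ converge pointwise; what your route buys is elementarity. The price you pay is the assertion that $\{S(h_n)'\}$ is equi-Lipschitz: this does \emph{not} follow from the bounds you cite ($\norm{S(h_n)}_\infty\le 1$ and $\norm{S(h_n)'}_\infty\le c_\nu$ control the Lipschitz constant of $S(h_n)$, hence only the \emph{size} of $S(h_n)'$, not its modulus of continuity -- for that one would want Theorem \ref{thm:Gamma-Stein-Solution-Properties}(b), which requires $h_n'\in\operatorname{Lip}$). You should either justify a common modulus of continuity for $\{S(h_n)'\}$ (e.g.\ from the ODE away from $x=-\nu$ plus a separate analysis at $-\nu$) or replace that application of Arzel\`a--Ascoli by the ODE argument. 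I note, however, that the paper's own Step 3 invokes exactly the same $c_\nu$-Lipschitz property of $S(h_n)'$ without further justification, so this is a delicate point shared by both proofs rather than a defect peculiar to yours; the remainder of your argument (the transfer of the $U^{\pm}$ decay estimates through the affine change of variables, and the $\epsilon$-splitting into a compact window plus tails) matches the paper's Steps 2 and 3.
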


\begin{proof}
Let $U_{\mathcal{B}}:= \{ h \in \mathcal{B} : \norm{h}_{\mathcal{B}}= \Vert h \Vert_\infty + \Vert h' \Vert_\infty \le  1 \}$ denote the unit ball of the Banach space $\mathcal{B}$. We need to show that the image $S \left( U_{\mathcal{B}} \right)$ of the unit ball is a precompact set in $\mathcal{B}$, or equivalently, that every sequence $\{S(h_n)\}_{n\ge 1} \subseteq S(U_{\mathcal{B}})$ has a convergent subsequence in the topology of the Banach space $\mathcal{B}$. We divide the rest of the proof in three steps.\\
\textit{Step (1)}: First we show that there exists a subsequence $\{ h_{n_k} \}_{k\ge 1}$ such that $h_{n_k} \to h$ pointwise for some $h\in U_{\mathcal{B}}$.  Moreover $S(h_{n_k}) \to S(h)$, and $ S(h_{n_k})'  \to S(h)'$ pointwise. Note that $\{h_n\}_{n\ge 1} \subseteq U_{\mathcal{B}}$ is a bounded subset of $\mathcal{B}$. It is well known (see for example \cite[Chapter 2]{LipAlg} or \cite[Theorem 2.4, and Proposition 2.1]{Nik-1} as well as the survey \cite{Lip-Free-Banach-Spaces-Survey}) that the Banach space $\mathcal{B}$ is a predual space, i.e. there exists a (unique) Banach space $\text{\AE}(\R)$, the so called \textit{Arens-Eells space}, such that $\text{\AE}(\R)^* = \mathcal{B}$.  On the other hand, the Banach-Alaoglu theorem implies that the unit ball $U_{\mathcal{B}}$ is weak-$^*$ compact. Moreover, $\R$ is a separable Banach space, so the Arens-Eells Banach space $\text{\AE}(\R)$ is, too \cite{Lip-Free-Banach-Spaces-Survey}. Hence the  weak-$^*$topology on $U_{\mathcal{B}}$ is metrizable. Therefore, weak-$^*$ compact is the same as weak-$^*$ sequentially compact on the unit ball $U_{\mathcal{B}}$. It follows that the sequence  $\{h_n\}_{n\ge 1}$ contains a subsequence that converges in the weak-$^*$ topology to an element $h \in U_{\mathcal{B}}$. Without loss of generality, we assume that the subsequence is given by the sequence itself. Hence there exists an element $h \in U_{\mathcal{B}}$ such that $h_n \to h$ in the $\text{weak}^*$-topology.  Furthermore, the weak-$^*$ topology on the bounded subsets of $\mathcal{B}$ coincides with the topology of pointwise convergence, see \cite[Proposition 2.1]{Nik-1}. As a consequence, $h_n \to h$ pointwise (here one should not expect that $h_n \to h$ weakly; otherwise this implies that the unit ball is weakly sequentially compact, and therefore the Banach space $\mathcal{B}$ is reflexive which is a contradiction).  An application of the Lebesgue dominated convergence theorem implies that $S(h_n) \to S(h)$ pointwise. Taking into account these observations together with the fact that for every $n \in \N$ we have
\[
2 (x+\nu) S(h_n)'(x) - x S(h_n)(x) = h_n (x) - \E \left[  h_n (G(\nu))\right],
\]
there exists a function $f$ such that $S(h_n)' \to f$ pointwise. On the other hand, for every $x \in \R$ we have that
\[
2 (x+\nu) f(x) = h (x) - \E \left[ h (G(\nu))\right] + x S(h)(x).
\]
Recall that $ h \in U_{\mathcal{B}}$. Hence, the function $S(h)$ satisfies the Gamma Stein equation $$2(x+\nu)S(h)'(x) =  h (x) - \E \left[ h (G(\nu))\right] + x S(h)(x).$$ Hence $f = S(h)'$, and also $S(h_n)' \to S(h)'$ pointwise.\\
\textit{Step (2)}: In this step, we show that $S(U_{\mathcal{B}}) \subseteq C_0(\R)$ is a family of functions having the \textit{equivanishing at infinity} property, i.e. for every given $\epsilon >0$, there exists a compact interval $K \subset \R$ such that $ \big \vert f(x) \big \vert < \epsilon$ for all $f \in  S(U_{\mathcal{B}})$ and for all $x \notin K$. To do this, we use the explicit integral representation \eqref{eq:ExplicitRepresentationOfSolution}. Note that since $\norm{h}_{\infty} \le 1$, we have $\abs{h(t) - \E[h(G_\nu)] } \leq 2$ for all $t\in\R$. When $x > -\nu$, then (recall that $\hat p_{\nu}$ is the density of $G(\nu)$):
\begin{align*}
\Big \vert S(h)(x) \Big \vert & = \abs*{ \int_{-\nu} ^{x} \frac{\hat p_{\nu}(t)}{2 (x+\nu) \hat p_{\nu}(x)} \Big( h(t) - \E\big[ h(G_\nu) \big] \Big) \, dt} \\
& =  \abs*{ \int_{x}^{\infty} \frac{\hat p_{\nu}(t)}{2 (x+\nu) \hat p_{\nu}(x)} \Big( \E\big[ h(G_\nu) \big] - h(t) \Big) \, dt} \\
& \leq \int_{x}^{\infty} \frac{\hat p_{\nu}(t)}{(x+\nu) \hat p_{\nu}(x)} \, dt  = \frac{1}{x+\nu} \int_{x}^{\infty} \left(\frac{t+\nu}{x+\nu}\right)^{\frac{\nu}{2}-1} \frac{e^{-t/2}}{e^{-x/2}} \, dt.
\end{align*}
Now if $\nu \leq 2$, then $ \left(\frac{t+\nu}{x+\nu}\right)^{\nu/2 -1} \leq 1$ and thus
\[ \Big \vert S(h)(x) \Big \vert  \leq  \frac{1}{x+\nu} \int_{x}^{\infty} \frac{e^{-t/2}}{e^{-x/2}} \, dt = \frac{2}{x+\nu} \stackrel{x \to \infty}{\longrightarrow} 0. \]
When $\nu > 2$, set $r:= \ceil{\nu/2 -1}$. We have
\begin{align*}
\Big \vert S(h)(x) \Big \vert & \leq \frac{e^{x/2}}{(x+\nu)^{\nu/2}} \int_{x}^{\infty} (t+\nu)^{\frac{\nu}{2}-1} \, e^{-t/2} \, dt 
 \leq \frac{e^{x/2}}{(x+\nu)^{\nu/2}} \int_{x}^{\infty} (t+\nu)^{r} \, e^{-t/2} \, dt \\
& = e^{\nu/2} \, \frac{e^{x/2}}{(x+\nu)^{\nu/2}} \int_{x+ \nu}^{\infty} t^r \, e^{-t/2} \, dt \\
& = e^{\nu/2} \, \frac{e^{x/2}}{(x+\nu)^{\nu/2}} \, e^{-\frac{x}{2} - \frac{\nu}{2}} \sum_{i=0}^{r} (-1)^{r-i+1} \frac{r!}{i! (-\frac{1}{2})^{r-i+1}} (x+\nu)^i \\
& =: \frac{P(x)}{(x+\nu)^{\nu/2}},
\end{align*}
where $P$ is a polynomial of degree $r$. Since we always have $r < \nu/2$, it follows that $\lim_{x\to \infty} \abs{S(h)(x)} = 0 $. When $x < - \nu$, again using \eqref{eq:ExplicitRepresentationOfSolution} of the explicit representation of the solution function $S(h)$, we get  
\begin{align*}
\abs{S(h)(x)} & = \abs*{ \int_{-\nu} ^{x} \frac{\hat q_{\nu}(t)}{2 (x+\nu) \hat q_{\nu}(x)} \Big( h(t) - \E\big[ h(G_\nu) \big] \Big) \, dt} \\
& \leq  \int_{x}^{-\nu} \frac{\hat q_{\nu}(t)}{(-x - \nu) \hat q_{\nu}(x)} \, dt  = \frac{1}{-x -\nu} \int_{x}^{-\nu} \left(\frac{-t-\nu}{-x-\nu}\right)^{\frac{\nu}{2}-1} \frac{e^{-t/2}}{e^{-x/2}} \, dt.
\end{align*}
Hence, the case $x \to - \infty$ can now be discussed similarly. Note that the upper bounds for $\abs{S(h)(x)}$ that we found do not depend on the choice of the test function $h$. Therefore, we have shown that, in addition to $S(U_{\mathcal{B}}) \subseteq C_0(\R)$, the collection $S(U_{\mathcal{B}})$ is a family of functions that are equivanishing at infinity. \\
\textit{Step (3)}: Next we show that as $n \to \infty$,
\begin{equation}\label{eq:uniform-conv}
\Big \Vert S(h_n) - S(h) \Big\Vert_{\mathcal{B}}= \Big \Vert S(h_n) - S(h) \Big\Vert_\infty  +  \Big \Vert S(h_n)' - S(h)' \Big \Vert_\infty \to 0.
\end{equation}
By Step $(2)$, for a given $\epsilon>0$, there exists a compact interval $K \subset \R$ such that 
\begin{equation}\label{eq:equi-vanish}
\sup_{n\ge 1} \sup_{x \notin K} \max \Big\{ \Big \vert S(h_n) (x)\Big \vert, \Big \vert S(h)(x) \Big \vert \Big\} < \epsilon.
\end{equation}
 On the other hand, the family $(S(h_n): n\ge 1)$ consists of $1$-Lipschitz-continuous functions (see part (a), Theorem  \ref{thm:Gamma-Stein-Solution-Properties}), and by step (1) converges pointwise to $S(h)$ on the compact interval $K$. Hence, Lemma \ref{lem:basic-lemma} yields that  
 \begin{equation}\label{eq:unifrom-solution-1}
 S(h_n) \to S(h) \quad  \text{uniformly on } \, K.
 \end{equation}
Finally relations \eqref{eq:equi-vanish} and \eqref{eq:unifrom-solution-1} readily imply that $S(h_n) \to S(h)$ uniformly on the real line. Now, we are left to show that $\Vert S(h_n)' - S(h)' \Vert_\infty \to 0$. To this end, first note that for every $h \in U_{\mathcal{B}}$, and every $x \neq y \in \R$ it holds that $\vert S(h)' (x) - S(h)'(y) \vert \le c_\nu \Vert h'\Vert_\infty \vert x -y \vert \le c_\nu \vert x -y \vert$. Hence, the family $\{ S(h_n)', S(h)' \, : n\ge 1 \}$ consists of $c_\nu$-Lipschitz continuous functions. On the other hand, Lemma \ref{rem:derivative-bounds} yields that the family $\{ S(h_n)', S(h)' \, : n\ge 1 \}$ is equivanishing at infinity. The result now follows.

\if 

====================================
First we show that the set $S \left( U_{\mathcal{B}} \right)$ is precompact in the space of continuous functions defined over the whole real line and vanishing at infinity equipped with the uniform topology, i.e. $$C_0(\R) = \Big \{ h: \R \to \R, \text{ continuous } : \lim_{\vert x \vert \to +\infty} h(x) =0 \Big \}.$$ The proof is rather lengthly, and it is divided into several steps in below.\\

\textit{Step (2)}: The family $S(U_{\mathcal{B}})$ is \textit{equicontinuous} in the sense that for all $, \epsilon > 0, x \in \R$, there exists an open neighborhood $x \in U_x$ such that for all $y \in U_x$, and all $f \in S(U_{\mathcal{B}})$: $ \vert f(y) - f(x) \vert < \epsilon$.  To this end, take an arbitrary $f = S(h) \in S(U_{\mathcal{B}})$ for some $h \in U_{\mathcal{B}}$. Then, according to Theorem \ref{thm:Gamma-Stein-Solution-Properties} part (a), $f$ is a Lipschitz-continuous function satisfying $\vert f(x) - f(y) \vert \le C_\nu \Vert h'\Vert_\infty \vert x -y \vert  \le C_\nu \vert x -y \vert$. Note that $\Vert h' \Vert _\infty \le \Vert h \Vert_{\mathcal{B}} \le 1$. Now the claim follows at once. \\
\textit{Step (3)}: The family $S(U_{\mathcal{B}})$ is \textit{pointwise bounded} in the sense that $\sup_{h \in U_{\mathcal{B}}} \vert S(h) (x) \vert < \infty$ for all $x \in \R$. This also follows from Theorem \ref{thm:Gamma-Stein-Solution-Properties} part (a) that $$\sup_{h \in U_{\mathcal{B}} }\sup_{x \in \R} \vert S(h) (x)\vert \le  \sup_{h \in U_{\mathcal{B}} } \Vert h' \Vert \le \sup_{h \in U_{\mathcal{B}} } \Vert h \Vert_{\mathcal{B}} \le   1.$$
\textit{Step (4)}: The previous steps together with a direct application of Theorem 5 \cite{arzela-ascoli} yield that every sequence $\{S(h_n)\}_{n \ge 1} \subseteq S(U_{\mathcal{B}})$ has a uniform convergence subsequence. Without loose of generality, we assume that the subsequence  is the sequence itself, and therefore as $n \to \infty$, we get $\Vert S(h_n) - f \Vert_\infty \to 0$ for some $f \in C_0(\R)$. Now, let $x \neq y \in \R$. Then
\begin{eqnarray}
\Big \vert 	\frac{f(x) -f (y)}{x-y} \Big \vert = \lim_{n\to \infty} \Big \vert 	\frac{S(h_n)(x) -S(h_n) (y)}{x-y} \Big \vert \le \limsup_{n\to \infty} \Vert S(h_n)'\Vert_\infty \\\le C_\nu \limsup_{n\to \infty} \Vert h'_n \Vert_\infty \le C_\nu  \limsup_{n\to \infty}  \Vert h_n\Vert_{\mathcal{B}} \le C_\nu.
\end{eqnarray}
Hence, $f$ is a Lipschitz-continuous function with $\Vert f' \Vert_\infty \le C_\nu$. This implies that $f \in \mathcal{B}$. \\
\textit{Step (5)}: Next, we claim that $\Vert S(h_n) - f \Vert_\mathcal{B} \to 0$  as $n \to \infty$. To this end, using the linearity of $S$, since $\Vert S(h_n - h_m)\Vert_\infty \to 0$ as $n,m \to \infty$, therefore, it is enough to show that for every $x \neq y \in \R$, as $n,m \to \infty$, we have 
\begin{equation}\label{eq:convergence-Lipschitz-norm}
\frac{\Big \vert  S(h_n - h_m) (x) - S(h_n -h_m)(y)  \Big \vert }{\vert x -y \vert} \to 0.
\end{equation}
Let $ -\nu < y \le x$. Then
\begin{equation*}
\begin{split}
 S(h_n - h_m) (x) &- S(h_n -h_m)(y)  
 =\Bigg\{ \int_{-\nu}^{y} \Big[ (h_n - h_m)(t) - \E\left[  (h_n - h_m)(G(\nu))\right]\Big] \hat p_\nu (t) dt \Bigg\} \\ & \qquad \times \Bigg\{  \frac{1}{2(x+\nu) \hat p_\nu (x)} - \frac{1}{2(y+\nu) \hat p_\nu (y)} \Bigg\}\\
 &+ \frac{1}{2(x+\nu) \hat p_\nu (x)} \int_{y}^{x} \Big[ (h_n - h_m)(t) - \E\left[  (h_n - h_m)(G(\nu))\right]\Big] \hat p_\nu (t) dt \\
 &:= I_1 + I_2.
 \end{split}
\end{equation*}
Now, note that $$I_1 = \Bigg\{ \frac{(y+\nu) \hat p_\nu (y) - (x+\nu) \hat p_\nu (x)}{(x+\nu) \hat p_\nu (x)} \Bigg\} \times S(h_n-h_m)(y).$$ Hence, we are left to show that there exists a constant $M_1>0$ such that $$\Bigg \vert 1 - \frac{(y+\nu)}{(x+\nu)} \frac{\hat p_\nu (y)}{\hat p_\nu (x)} \Bigg \vert \le M_1 \vert x - y \vert.$$
{\color{blue} We will come back to check out this issue later on. But how about $I_2$ term ?}

\fi

\end{proof}

\begin{thm}\label{thm:fredholm-alternative}
Let $\lambda \in \R$ be a non-zero scalar. Then for every $h\in \mathcal{B}$ there exists a unique solution $g \in \mathcal{B}$ to the functional equation 
\begin{equation}\label{eq:functional-eq}
h = \left( I + \lambda S \right)(g) = g + \lambda S(g).
\end{equation}
\end{thm}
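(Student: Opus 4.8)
The plan is to invoke the Fredholm alternative (Riesz--Schauder theory) for compact operators on a Banach space. Recall the relevant statement: if $T$ is a compact linear operator on a Banach space $\mathcal{X}$ and $\mu \neq 0$ is a scalar, then $\mu I - T$ has finite-dimensional kernel and closed range whose codimension equals $\dim \Ker(\mu I - T)$; in particular $\mu I - T$ is injective if and only if it is surjective, in which case it is a bijection of $\mathcal{X}$ onto itself with bounded inverse.

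First I would put the operator into the standard form. Since $\lambda \neq 0$, set $T := -\lambda S$. By Proposition \ref{prop:being-compact-operator} the operator $S$ is compact, and a scalar multiple of a compact operator is again compact, so $T$ is compact; thus $I + \lambda S = I - T = \mu I - T$ with $\mu = 1 \neq 0$. Then I would record injectivity: by Proposition \ref{prop:no-eigenvalue}(b) we have $\Ker(I + \lambda S) = \{0\}$ for every non-zero $\lambda$, i.e. $I - T$ is one to one. The Fredholm alternative now forces $I - T$ to be onto as well, hence a bijection of $\mathcal{B}$ onto itself. Unwinding this statement: for every $h \in \mathcal{B}$ there is one and only one $g \in \mathcal{B}$ with $h = (I + \lambda S)(g) = g + \lambda S(g)$, which is precisely the claim (and, as a bonus, the solution map $h \mapsto g$ is a bounded linear operator on $\mathcal{B}$).

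I do not expect a genuine obstacle here, since all the substantive work has already been done in the preceding results; the one point deserving care is that $\mathcal{B}$ is only a Banach space, not a Hilbert space, so one must appeal to the Banach-space form of the Fredholm alternative (the Riesz--Schauder theory of compact operators) rather than the more elementary Hilbert-space version. All of its hypotheses have been verified upstream: $\mathcal{B}$ is complete (Lemma \ref{lem:Being-Banach}), $S$ is a bounded linear operator on $\mathcal{B}$ (Lemma \ref{lem:being-linear-bounded}), $S$ is compact (Proposition \ref{prop:being-compact-operator}), and $\Ker(I+\lambda S)$ is trivial (Proposition \ref{prop:no-eigenvalue}). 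One could alternatively obtain the conclusion for small $\lambda$ by a Neumann series, using $\norm{S} \le 1 + c_\nu$ from Lemma \ref{lem:being-linear-bounded}, but invoking compactness removes the restriction on $\lambda$ and yields the result for all non-zero scalars at once.
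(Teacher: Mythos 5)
Your proof is correct and is exactly the paper's argument: the paper also deduces the statement by combining the compactness of $S$ (Proposition \ref{prop:being-compact-operator}) and the triviality of $\Ker(I+\lambda S)$ (Proposition \ref{prop:no-eigenvalue}) with the Banach-space Fredholm alternative from \cite{Banach-Space-Theory}. You have merely written out the reduction to the standard form $\mu I - T$ that the paper leaves implicit.
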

\begin{proof}
This is a direct application of Propositions \ref{prop:no-eigenvalue}, \ref{prop:being-compact-operator}, and the classical Fredholm alternative Theorem  \cite[3.4.24, page 329]{Banach-Space-Theory}.	
	\end{proof}

For $r > 0$, let $U_{\mathcal{B}} (r): = \{ h \in \mathcal{B} :  \Vert h \Vert_{\mathcal{B}} \le r \}$ denote the ball of radius $r$.
\begin{prop}\label{prop:universal-bound-solution}
Let $r_1 >0$, and $\lambda \in \R$ be a non-zero scalar. Then there exists a universal constant $r_2$ (may depend on $r_1$, $\lambda$, and $\nu$) such that for every $h \in U_{\mathcal{B}} (r_1)$ the unique solution $g$ of the functional equation \eqref{eq:functional-eq} satisfies $\Vert g \Vert_{\mathcal{B}} \le r_2$. 
\end{prop}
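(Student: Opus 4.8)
The plan is to deduce the uniform bound from the open mapping theorem (bounded inverse theorem) applied to the operator $T_\lambda := I + \lambda S$ on the Banach space $\mathcal{B}$. First I would record that $T_\lambda : \mathcal{B} \to \mathcal{B}$ is a bounded linear operator: linearity and boundedness of $S$ are furnished by Lemma \ref{lem:being-linear-bounded} (indeed $\Vert S \Vert \le 1 + c_\nu$), so $\Vert T_\lambda \Vert \le 1 + \abs{\lambda}(1+c_\nu)$. Next, Theorem \ref{thm:fredholm-alternative} asserts precisely that for every $h \in \mathcal{B}$ there is a \emph{unique} $g \in \mathcal{B}$ with $T_\lambda g = h$, i.e. $T_\lambda$ is a bijection of $\mathcal{B}$ onto itself. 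Since $\mathcal{B}$ is a Banach space (Lemma \ref{lem:Being-Banach}), the bounded inverse theorem then yields that $T_\lambda^{-1} : \mathcal{B} \to \mathcal{B}$ is itself bounded. Consequently, for any $h \in U_{\mathcal{B}}(r_1)$, writing $g = T_\lambda^{-1} h$, we obtain $\Vert g \Vert_{\mathcal{B}} = \Vert T_\lambda^{-1} h \Vert_{\mathcal{B}} \le \Vert T_\lambda^{-1} \Vert \, \Vert h \Vert_{\mathcal{B}} \le \Vert T_\lambda^{-1} \Vert \, r_1$, and this last quantity is independent of the particular $h$. Hence $r_2 := r_1 \, \Vert T_\lambda^{-1} \Vert$ works, and it depends only on $r_1$, $\lambda$ and $\nu$, as claimed.

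I would also present, as a self-contained alternative that leans directly on the compactness of $S$ established in Proposition \ref{prop:being-compact-operator} rather than on the open mapping theorem, the following contradiction argument. Suppose no such $r_2$ exists. Then there are $h_n \in U_{\mathcal{B}}(r_1)$ whose solutions $g_n = T_\lambda^{-1} h_n$ satisfy $\Vert g_n \Vert_{\mathcal{B}} \to \infty$. Set $\tilde g_n := g_n / \Vert g_n \Vert_{\mathcal{B}}$, so that $\Vert \tilde g_n \Vert_{\mathcal{B}} = 1$ and $\tilde g_n + \lambda S(\tilde g_n) = h_n / \Vert g_n \Vert_{\mathcal{B}} \to 0$ in $\mathcal{B}$. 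Since $\{\tilde g_n\}$ is bounded and $S$ is compact, some subsequence $S(\tilde g_{n_k})$ converges in $\mathcal{B}$, say $S(\tilde g_{n_k}) \to w$; then from $\tilde g_{n_k} = h_{n_k}/\Vert g_{n_k} \Vert_{\mathcal{B}} - \lambda S(\tilde g_{n_k})$ we get $\tilde g_{n_k} \to -\lambda w =: \tilde g$ in $\mathcal{B}$, with $\Vert \tilde g \Vert_{\mathcal{B}} = 1$. Passing to the limit and using continuity of $S$ gives $\tilde g + \lambda S(\tilde g) = 0$, i.e. $S(\tilde g) = -\tfrac{1}{\lambda}\tilde g$ with $\tilde g \ne 0$, so $-1/\lambda$ is a non-zero eigenvalue of $S$ — contradicting Proposition \ref{prop:no-eigenvalue}(a).

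There is no genuinely hard step here; the argument is soft functional analysis. The only point that needs care is bookkeeping of hypotheses for the invoked theorems: that $\mathcal{B}$ is a Banach space (Lemma \ref{lem:Being-Banach}), that $T_\lambda$ is bounded and linear (Lemma \ref{lem:being-linear-bounded}), that $T_\lambda$ is bijective (Theorem \ref{thm:fredholm-alternative}), and — for the second route — that $S$ is compact (Proposition \ref{prop:being-compact-operator}) with no non-zero eigenvalue (Proposition \ref{prop:no-eigenvalue}). I would present the open-mapping proof as the main line and append the compactness variant as a remark.
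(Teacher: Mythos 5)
Your main argument is exactly the paper's proof: the operator $I+\lambda S$ is a bounded linear bijection of the Banach space $\mathcal{B}$ onto itself (by Lemma \ref{lem:being-linear-bounded} and Theorem \ref{thm:fredholm-alternative}), so the bounded inverse theorem gives $\Vert g \Vert_{\mathcal{B}} \le \Vert (I+\lambda S)^{-1}\Vert \, r_1$. The appended compactness/contradiction variant is also correct and self-contained, but the primary route coincides with the paper's.
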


\begin{proof}
From Proposition \ref{prop:no-eigenvalue} and Theorem \ref{thm:fredholm-alternative}, the linear bounded operator $I + \lambda S : \mathcal{B} \to \mathcal{B}$ is a bijective map. Hence the result follows at once using the inverse mapping Theorem \cite[1.6.6 Corollary]{Banach-Space-Theory}.
\end{proof}

\section{Optimal Gamma Approximation}\label{sec:optimal}

\subsection{A General Stein-Malliavin Upper Bound} \label{sec:MainUpperBound}
In the following, we present a general Malliavin-Stein upper bound that constitutes the cornerstone to achieve our final optimal goal.  We start with the following useful result. Sometimes, we will use centered versions of the Gamma-operators, i.e.
\[  \CenteredGamma_j(F)  := \Gamma_j(F) - \E[\Gamma_j(F)] .\]

\begin{Prop}\label{prop:first-general-upper-bound}
	Let $F$ be a centered random variable admitting a finite chaos expansion with $\Var(F) = 2 \nu$. Let $G(\nu) \sim CenteredGamma(\nu)$. Then there exists a constant $C>0$ (only depending on $\nu$), such that
	\begin{equation}\label{first-main-theoretical-estimate} 
	d_2(F,G(\nu)) \leq C  \sup_{h\in \mathcal{B}_{1,1}} \E \, \Big \vert h(F)  \big(  \CenteredGamma_1(F) - 2 F \big) \Big \vert,
	\end{equation}
	where recall that $\mathcal{B}_{1,1}:= \big \{ h: \R \to \R,  \, \text{Lipschitz-continuous} \, : \, \Vert h \Vert \le 1, \, \Vert h' \Vert_\infty \le 1 \big \}$.
\end{Prop}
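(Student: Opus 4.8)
The plan is to run the standard Nourdin--Peccati scheme: feed the centered Gamma Stein equation \eqref{eq:CenteredGamma-stein-eq} with a test function $h\in\mathcal H_2$, replace $x$ by $F$, take expectations, and then use the Malliavin integration by parts formula \eqref{eq:IntegrationByParts} to rewrite the result in terms of $\CenteredGamma_1(F)-2F$. The only genuinely new bookkeeping is to rescale the resulting Stein factor so that it becomes an admissible test function of class $\mathcal B_{1,1}$.

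First I would fix $h\in\mathcal H_2$, so that $\|h'\|_\infty\le 1$ and $\|h''\|_\infty\le 1$; in particular $h$ and $h'$ are both Lipschitz-continuous, so Theorem \ref{thm:Gamma-Stein-Solution-Properties} applies and yields $f:=S(h)\in C^1(\R)$ solving $2(x+\nu)f'(x)-xf(x)=h(x)-\E[h(G(\nu))]$ together with the bounds
\[ \|f\|_\infty\le 1,\qquad \|f'\|_\infty\le c_\nu,\qquad \|f''\|_\infty\le c_\nu+1,\qquad\text{where } c_\nu=\max\{1,2/\nu\}. \]
Evaluating the Stein equation at $F$ and taking expectations gives
\[ \E[h(F)]-\E[h(G(\nu))]=2\,\E[Ff'(F)]+2\nu\,\E[f'(F)]-\E[Ff(F)]. \]
Because $F$ has a finite chaos expansion, $F\in\mathbb D^{1,2}$ (indeed $F$ lies in every Malliavin--Sobolev space), and since $f$ is $C^1$ with bounded derivative the chain rule \eqref{eq:ChainRule} gives $Df(F)=f'(F)\,DF$; as $\E[F]=0$, the integration by parts formula \eqref{eq:IntegrationByParts} then yields $\E[Ff(F)]=\E[\langle Df(F),-DL^{-1}F\rangle_\HH]=\E[f'(F)\,\Gamma_1(F)]$. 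Using $\E[\Gamma_1(F)]=\kappa_2(F)=\Var(F)=2\nu$ (Proposition \ref{Prop:RelationOldAndNewGamma}(b), or directly by taking $G=F$ in \eqref{eq:IntegrationByParts}) I would write $\Gamma_1(F)=\CenteredGamma_1(F)+2\nu$; the two copies of $2\nu\,\E[f'(F)]$ then cancel and we are left with the clean identity
\[ \E[h(F)]-\E[h(G(\nu))]=-\,\E\big[f'(F)\,\big(\CenteredGamma_1(F)-2F\big)\big]. \]

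It remains to bound the right-hand side in the required form. The Stein factor $f'=S(h)'$ satisfies $\|f'\|_\infty\le c_\nu$ and has Lipschitz constant $\|f''\|_\infty\le c_\nu+1$, so $\tilde h:=f'/(c_\nu+1)\in\mathcal B_{1,1}$. Hence
\[ \big|\E[h(F)]-\E[h(G(\nu))]\big|=(c_\nu+1)\,\big|\E\big[\tilde h(F)\big(\CenteredGamma_1(F)-2F\big)\big]\big|\le(c_\nu+1)\sup_{g\in\mathcal B_{1,1}}\E\big|g(F)\big(\CenteredGamma_1(F)-2F\big)\big|, \]
and taking the supremum over $h\in\mathcal H_2$ on the left-hand side gives \eqref{first-main-theoretical-estimate} with $C=c_\nu+1$, which depends only on $\nu$.

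I do not expect a real obstacle here; the points that need some care are: (i) justifying the chain rule \eqref{eq:ChainRule} and the integration by parts \eqref{eq:IntegrationByParts}, which relies on $F$ having a finite chaos expansion (so that $F$ and $f(F)$ lie in the relevant Malliavin--Sobolev spaces) together with the remark after \eqref{eq:ChainRule} that the stated hypotheses can be relaxed; (ii) finiteness of all expectations --- $h$ grows at most linearly and $F$, $G(\nu)$ are integrable, while $\CenteredGamma_1(F)-2F$ has a finite chaos expansion and $f'(F)$ is bounded; and (iii) keeping track of the constants so that the final $C$ depends on $\nu$ only, which it does through $c_\nu$.
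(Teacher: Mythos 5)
Your proof is correct and follows essentially the same route as the paper's: apply the Stein equation \eqref{eq:CenteredGamma-stein-eq} to $h\in\mathcal H_2$, use integration by parts \eqref{eq:IntegrationByParts} with the chain rule to convert $\E[FS(h)(F)]$ into $\E[S(h)'(F)\Gamma_1(F)]$, center $\Gamma_1(F)$ to obtain $\E[h(F)]-\E[h(G(\nu))]=-\E[S(h)'(F)(\CenteredGamma_1(F)-2F)]$, and then invoke Theorem \ref{thm:Gamma-Stein-Solution-Properties} to place the rescaled Stein factor in $\mathcal B_{1,1}$. Your write-up merely makes explicit the constant $C=c_\nu+1$ and the regularity checks that the paper leaves implicit.
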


\begin{proof}
	Consider the centered Gamma Stein equation \eqref{eq:CenteredGamma-stein-eq}. Let $h \in \mathcal{H}_2$ be an arbitrary test function (note that $\E \vert h(G(\nu)) \vert < \infty$). Then by using the Malliavin integration by parts formula \eqref{eq:IntegrationByParts}, we get
	\begin{align*}
	\abs*{\E[h(F)] - \E[h(G(\nu))] } & = \abs*{ \E \left[ 2(F+ \nu) \, S(h)'(F) - F S(h)(F) \right] } \\
	& = \abs*{ \E \left[ 2(F+ \nu) \, S(h)'(F) - S(h)'(F) \sprod{DF, - D L^{-1} F }_{\mathfrak{H}} \right] } \\
	& = \abs*{ \E \left[ S(h)'(F) \left( \CenteredGamma_1(F) - 2 F\right)\right] }.
	\end{align*}
	Now the claim follows at once by a direct application of Theorem \ref{thm:Gamma-Stein-Solution-Properties}.
\end{proof}

To simplify computations, we continue with the following useful Lemmas.

\begin{Lem} \label{lem:AuxiliarlyLemmaForMainTheorem}
	Let $g : \R \to \R$ be a Lipschitz-continuous function, where $g$ and $g'$ are bounded by a constant only depending on $\nu > 0$. Consider the solution $S(g)$ of the Gamma Stein equation \eqref{eq:CenteredGamma-stein-eq} associated to the test functions $g$.  Assume that $F\in \mathbb{D}^{\infty}$ is a centered random variable with variance $\E[F^2]= 2 \nu$.  Then for any $r \in \N$:
	\begin{align*}
	\E \Big[ g(F) \Big( \CenteredGamma_r(F) - 2 \CenteredGamma_{r-1}(F) \Big) \Big] = & - \E\Big[ S(g)'(F) \Big( \CenteredGamma_r(F) - 2 \CenteredGamma_{r-1}(F) \Big) \Big( \CenteredGamma_1(F) - 2 F \Big) \Big] \\
	& - \E \Big[ S(g)(F) \Big( \Gamma_{r+1}(F) - 2 \Gamma_r(F) \Big) \Big].
	\end{align*}
\end{Lem}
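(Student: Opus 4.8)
The plan is to evaluate the Gamma Stein equation \eqref{eq:CenteredGamma-stein-eq} for the test function $g$ at the random variable $F$, multiply by the centered combination $\CenteredGamma_r(F) - 2\CenteredGamma_{r-1}(F)$, take expectations, and then convert the right-hand side into the asserted form by one application of the Malliavin integration-by-parts formula \eqref{eq:IntegrationByParts}. The regularity needed is supplied by Theorem \ref{thm:Gamma-Stein-Solution-Properties}(a): since $g$ is Lipschitz with $g, g'$ bounded in terms of $\nu$, the solution $S(g)$ exists with $S(g)$ and $S(g)'$ bounded, so that $S(g)(F)$, $S(g)'(F)$ and every product occurring below is integrable — the needed moments of $F$, of the iterated Gamma operators $\Gamma_j(F)$ and of their products being guaranteed by $F \in \mathbb{D}^{\infty}$.

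Concretely, I would first evaluate \eqref{eq:CenteredGamma-stein-eq} at $x=F$, namely
$$ g(F) - \E[g(G(\nu))] = 2(F+\nu)\, S(g)'(F) - F\, S(g)(F), $$
multiply through by $Y := \CenteredGamma_r(F) - 2\CenteredGamma_{r-1}(F)$ and take expectations; since the centered Gamma operators have mean zero, $\E[Y]=0$, the constant term $\E[g(G(\nu))]$ disappears, and
$$ \E\big[ g(F)\, Y \big] = \E\big[ 2(F+\nu)\, S(g)'(F)\, Y \big] - \E\big[ F\, S(g)(F)\, Y \big]. $$
Then I would work on the last term: with $Z := S(g)(F)\, Y$ and $F$ centered, \eqref{eq:IntegrationByParts} gives $\E[F Z] = \E[\sprod{DZ, -DL^{-1}F}_{\HH}]$; applying the product form of the chain rule \eqref{eq:ChainRule}, together with $D Y = D\Gamma_r(F) - 2 D\Gamma_{r-1}(F)$ (the numbers $\E[\Gamma_j(F)]$ being constants), yields $DZ = Y\, S(g)'(F)\, DF + S(g)(F)\big( D\Gamma_r(F) - 2 D\Gamma_{r-1}(F) \big)$; and pairing with $-DL^{-1}F$ and reading off the defining recursion \eqref{eq:GammOperatorDefinition}, i.e. $\sprod{DF,-DL^{-1}F}_{\HH} = \Gamma_1(F)$, $\sprod{D\Gamma_r(F),-DL^{-1}F}_{\HH} = \Gamma_{r+1}(F)$ and $\sprod{D\Gamma_{r-1}(F),-DL^{-1}F}_{\HH} = \Gamma_r(F)$, gives
$$ \E\big[ F\, S(g)(F)\, Y \big] = \E\big[ S(g)'(F)\, Y\, \Gamma_1(F) \big] + \E\big[ S(g)(F)\big( \Gamma_{r+1}(F) - 2\Gamma_r(F) \big) \big]. $$

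Finally I would substitute this back, collect the $S(g)'(F)$-terms into $\E[S(g)'(F)\, Y\, (2(F+\nu) - \Gamma_1(F))]$, and simplify the coefficient by Proposition \ref{Prop:RelationOldAndNewGamma}(b): since $\E[\Gamma_1(F)] = \kappa_2(F) = \Var(F) = 2\nu$, one has $\Gamma_1(F) = \CenteredGamma_1(F) + 2\nu$, whence $2(F+\nu) - \Gamma_1(F) = -(\CenteredGamma_1(F) - 2F)$; inserting this and recalling the definition of $Y$ produces exactly the claimed identity. The step I expect to require the most care is the application of \eqref{eq:IntegrationByParts} and \eqref{eq:ChainRule} to the product $S(g)(F)\, Y$: here $S(g)$ is only Lipschitz rather than $C^1$ with bounded derivative as in the clean statements of those formulas, so the manipulation has to be justified by approximating $S(g)$ and $S(g)'$ by smooth functions and passing to the limit, the necessary uniform integrability being provided by the boundedness of $S(g), S(g)'$ together with the hypercontractivity available for $F \in \mathbb{D}^{\infty}$.
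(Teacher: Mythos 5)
Your proof is correct and follows essentially the same route as the paper's: apply the Stein equation to $g(F)-\E[g(G(\nu))]$, use that the centered Gamma combination has mean zero, and convert $\E[F\,S(g)(F)\,Y]$ via the Malliavin integration-by-parts formula and the recursion defining $\Gamma_{r+1}$. Your version is just a more compact bookkeeping of the same computation (the paper splits it into six terms $T_1,\dots,T_6$), and your remark about approximating the merely Lipschitz $S(g)$ is a reasonable extra care that the paper leaves implicit.
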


\begin{proof}
	First note that $2\nu = \E[ \Gamma_1(F)]$. Thus
	\begin{align*}
	\E \Big[ g(F) \Big( \CenteredGamma_r(F) - 2 \CenteredGamma_{r-1}(F) \Big) \Big] & = \E \Big[ \Big( g(F) - \E\big[g(G(\nu)) \big] \Big)  \Big( \CenteredGamma_r(F) - 2 \CenteredGamma_{r-1}(F) \Big) \Big] \\
	& = \E \Big[ \Big( 2(F+\nu) S(g)'(F) - F S(g)(F) \Big) \Big( \CenteredGamma_r(F) - 2 \CenteredGamma_{r-1}(F) \Big) \Big]  \\
	& \hspace{-10em} = 
	2 \, \E\big[ F S(g)'(F) \CenteredGamma_r(F) \big]
	+ \E \big[ \Gamma_1(F) \big] \E \big[S(g)'(F) \CenteredGamma_r(F) \big]
	- \E \big[ F S(g)(F) \CenteredGamma_r(F) \big] \\
	& \hspace{-9em} 
	- 4 \, \E\big[ F S(g)'(F) \CenteredGamma_{r-1}(F) \big]
	- 2 \, \E\big[ \Gamma_1(F) \big] \E \big[ S(g)'(F) \CenteredGamma_{r-1}(F) \big]
	+ 2 \, \E \big[ F S(g)(F) \CenteredGamma_{r-1}(F) \big] \\
	& = : \sum_{i=1}^{6} T_i.
	\end{align*}
	Now, we use the integration-by-parts formula \eqref{eq:IntegrationByParts}  in combination with the chain rule \eqref{eq:ChainRule} to obtain
	\begin{align*}
	T_3 + T_2 & = - \E \big[ F S(g)(F) \CenteredGamma_r(F) \big] + \E \big[ \Gamma_1(F) \big] \E \big[S(g)'(F) \CenteredGamma_r(F) \big] \\
	& = - \E \big[ \Gamma_1(F) \CenteredGamma_r(F) S(g)'(F) \big] - \E \big[ S(g)(F) \Gamma_{r+1}(F) \big] + \E \big[ \Gamma_1(F) \big] \E \big[S(g)'(F) \CenteredGamma_r(F) \big] \\
	& = - \E \big[ \CenteredGamma_1(F) \CenteredGamma_r(F) S(g)'(F) \big] - \E \big[ S(g)(F) \Gamma_{r+1}(F) \big],
	\end{align*}
	and similarly
	\[
	T_6 + T_5 = 2 \, \E \big[ \CenteredGamma_1(F) \CenteredGamma_{r-1}(F) S(g)'(F) \big] + 2 \, \E \big[ S(g)(F) \Gamma_r(F) \big].
	\]
	Hence, putting everything together, the result follows.
\end{proof}

\begin{lem}\label{lem:komaki2}
Let $g : \R \to \R$ be a Lipschitz-continuous function, where $g$ and $g'$ are bounded by a constant only depending on $\nu > 0$. Assume that $S(g)$ and $S\left( S(g)\right)$ stand for the solutions of the Gamma Stein equation \eqref{eq:CenteredGamma-stein-eq} associated to the test functions $g$ and $S(g)$ respectively. Let $F\in \mathbb{D}^{\infty}$ be a centered random variable with variance $\E[F^2]= 2 \nu$. Then the following identities take place.
\begin{itemize}
\item[(a)]
\begin{align*}
\E & \Big[ g(F) \Big( 2 F - \CenteredGamma_{1}(F) \Big) \Big] {}={} \E\Big[ S(g)'(F) \Big( \CenteredGamma_1(F) - 2 F \Big)^2 \Big] + \E \Big[ S(g)(F) \Big( \CenteredGamma_{2}(F) - 2 \CenteredGamma_1(F) \Big) \Big] \\
& \quad  - \Big( \E [S(g)(F)]\Big) \Big( \frac{1}{2} \kappa_3(F) - 2 \kappa_2(F) \Big) \\ 
\end{align*}
\item[(b)]
\begin{align*}
\E & \Big[ g(F) \Big( 2 F - \CenteredGamma_{1}(F) \Big) \Big] \\
& {}={} \E\Big[ S(g)'(F) \Big( \CenteredGamma_1(F) - 2 F \Big)^2 \Big] - \E\Big[ S \left( S(g)\right)'(F) \Big( \CenteredGamma_2(F) - 2 \CenteredGamma_{1}(F) \Big) \Big( \CenteredGamma_1(F) - 2 F \Big) \Big] \\
& \quad - \E \Big[ S\left( S(g)\right)(F) \Big( \CenteredGamma_{3}(F) - 2 \CenteredGamma_2(F) \Big) \Big] - \Big(\E [S(g)(F)] \Big)\Big( \frac{1}{2} \kappa_3(F) - 2 \kappa_2(F) \Big) \\
& \quad - \E \big[ S\left( S(g)\right)(F) \big] \Var\big( \Gamma_1(F) - 2 F \big) + \E \big[ S\left( S(g)\right)(F) \big] \Big(  \frac{1}{3} \kappa_4(F) - 3 \kappa_3(F) + 4 \kappa_2(F) \Big).
\end{align*}
\end{itemize}
\end{lem}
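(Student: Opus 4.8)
Both identities are obtained by \emph{bootstrapping} the integration-by-parts identity of Lemma~\ref{lem:AuxiliarlyLemmaForMainTheorem}, together with the dictionary between the operators $\Gamma_j$ and the cumulants furnished by Proposition~\ref{Prop:RelationOldAndNewGamma}. For part (a) I would simply specialise Lemma~\ref{lem:AuxiliarlyLemmaForMainTheorem} to $r=1$. Since $F$ is centered, $\CenteredGamma_0(F) = \Gamma_0(F) - \E[\Gamma_0(F)] = F$, so the left-hand side of that lemma reads $\E[g(F)(\CenteredGamma_1(F) - 2F)]$, i.e.\ the negative of the quantity appearing in (a). On the right-hand side the only object not yet expressed through centered operators is $\Gamma_2(F) - 2\Gamma_1(F)$; writing $\Gamma_j(F) = \CenteredGamma_j(F) + \E[\Gamma_j(F)]$ turns it into $(\CenteredGamma_2(F) - 2\CenteredGamma_1(F)) + (\E[\Gamma_2(F)] - 2\E[\Gamma_1(F)])$, and by Proposition~\ref{Prop:RelationOldAndNewGamma}(b) the deterministic correction equals $\tfrac12\kappa_3(F) - 2\kappa_2(F)$ (recall $\E[\Gamma_1(F)] = \kappa_2(F) = 2\nu$ and $\E[\Gamma_2(F)] = \tfrac12\kappa_3(F)$). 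Multiplying through by $-1$ and collecting terms produces the asserted identity.

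For part (b) I would start from (a): the only summand there not yet in ``final'' form is $\E[S(g)(F)(\CenteredGamma_2(F) - 2\CenteredGamma_1(F))]$, and this is precisely the left-hand side of Lemma~\ref{lem:AuxiliarlyLemmaForMainTheorem} with $r=2$ and the test function $g$ replaced by $S(g)$. This substitution is legitimate because, by Theorem~\ref{thm:Gamma-Stein-Solution-Properties}(a), $S(g)$ is Lipschitz-continuous with $\norm{S(g)}_\infty \le \norm{g'}_\infty$ and $\norm{S(g)'}_\infty \le c_\nu \norm{g'}_\infty$, both bounded by constants depending only on $\nu$, so $S(g)$ meets the hypotheses of the lemma and its Stein solution is $S(S(g))$. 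Applying the lemma yields the bilinear term $-\E[S(S(g))'(F)(\CenteredGamma_2(F) - 2\CenteredGamma_1(F))(\CenteredGamma_1(F) - 2F)]$ together with $-\E[S(S(g))(F)(\Gamma_3(F) - 2\Gamma_2(F))]$, and it remains to center this last piece.

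This final step is where one has to be slightly more careful, since the clean formula $\E[\Gamma_j(F)] = \tfrac1{j!}\kappa_{j+1}(F)$ breaks down at $j=3$. Splitting $\Gamma_3(F) - 2\Gamma_2(F) = (\CenteredGamma_3(F) - 2\CenteredGamma_2(F)) + (\E[\Gamma_3(F)] - 2\E[\Gamma_2(F)])$ and invoking Proposition~\ref{Prop:RelationOldAndNewGamma}(c), namely $\E[\Gamma_3(F)] = \tfrac13\kappa_4(F) - \Var(\Gamma_1(F))$, gives $\E[\Gamma_3(F)] - 2\E[\Gamma_2(F)] = \tfrac13\kappa_4(F) - \Var(\Gamma_1(F)) - \kappa_3(F)$. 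I would then trade $\Var(\Gamma_1(F))$ for $\Var(\Gamma_1(F) - 2F)$: expanding $\Var(\Gamma_1(F) - 2F) = \Var(\Gamma_1(F)) - 4\Cov(\Gamma_1(F),F) + 4\Var(F)$, using the integration-by-parts formula \eqref{eq:IntegrationByParts} and the chain rule \eqref{eq:ChainRule} applied to $\E[F\cdot F^2]$ to obtain $\Cov(\Gamma_1(F),F) = \E[F\Gamma_1(F)] = \tfrac12\E[F^3] = \tfrac12\kappa_3(F)$, and $\Var(F) = \kappa_2(F) = 2\nu$, one gets $\Var(\Gamma_1(F)) = \Var(\Gamma_1(F) - 2F) + 2\kappa_3(F) - 4\kappa_2(F)$. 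Substituting this back and gathering the scalar multiples of $\E[S(g)(F)]$ (carried over from (a)) and of $\E[S(S(g))(F)]$ gives exactly the right-hand side of (b).

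The main obstacle here is not conceptual but purely a matter of bookkeeping: one must track signs carefully when passing between $2F - \CenteredGamma_1(F)$ and $\CenteredGamma_1(F) - 2F$, and invoke the correct part of Proposition~\ref{Prop:RelationOldAndNewGamma} at each order --- the $\Var(\Gamma_1(F))$-type correction in (b) is precisely the signature of the failure of $\E[\Gamma_j(F)] = \tfrac1{j!}\kappa_{j+1}(F)$ at $j=3$. Because $F \in \mathbb{D}^{\infty}$ all moments are finite and \eqref{eq:IntegrationByParts}, \eqref{eq:ChainRule} apply to polynomials in $F$ multiplied by the bounded Lipschitz functions $S(g)$ and $S(S(g))$, so every manipulation above is licit; the same scheme, iterated once more with $g$ replaced by $S(S(g))$ and using Proposition~\ref{Prop:RelationOldAndNewGamma} for $\E[\Gamma_4(F)]$, would extend the expansion to the next order.
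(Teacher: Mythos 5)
Your proposal reproduces the paper's proof essentially verbatim: part (a) is Lemma~\ref{lem:AuxiliarlyLemmaForMainTheorem} with $r=1$ followed by centering $\Gamma_2(F)-2\Gamma_1(F)$ via Proposition~\ref{Prop:RelationOldAndNewGamma}(b), and part (b) is a second application of that lemma with $r=2$ and test function $S(g)$ (legitimate by Theorem~\ref{thm:Gamma-Stein-Solution-Properties}(a)), followed by Proposition~\ref{Prop:RelationOldAndNewGamma}(c) and the conversion $\Var(\Gamma_1(F))=\Var(\Gamma_1(F)-2F)+2\kappa_3(F)-4\kappa_2(F)$, which the paper derives from $\E[F\Gamma_1(F)]=\E[\Gamma_2(F)]$ rather than your equivalent $\tfrac12\E[F^3]$. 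The one caveat is that careful execution of this bookkeeping gives the constant term as $+\E[S(g)(F)]\big(\tfrac12\kappa_3(F)-2\kappa_2(F)\big)$ rather than with the minus sign displayed in the statement; this sign slip is already present in the paper's own proof and is harmless downstream, since $\tfrac12\kappa_3(G(\nu))-2\kappa_2(G(\nu))=0$ and the term enters Theorem~\ref{thm:MainMalliavinSteinBound} only through its absolute value.
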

\begin{proof}
	 We apply Lemma \ref{lem:AuxiliarlyLemmaForMainTheorem} twice to obtain 
\begin{align*}
\E & \Big[ g(F) \Big( 2 F - \CenteredGamma_{1}(F) \Big) \Big] = \E\Big[ S(g)'(F) \Big( \CenteredGamma_1(F) - 2 F \Big)^2 \Big] + \E \Big[ S(g)(F) \Big( \Gamma_{2}(F) - 2 \Gamma_1(F) \Big) \Big] \\
& {}={} \E\Big[ S(g)'(F) \Big( \CenteredGamma_1(F) - 2 F \Big)^2 \Big] - \Big( \E [S(g)(F)]\Big) \Big( \E \left[ \Gamma_{2}(F)\right] - 2 \kappa_2(F) \Big) \\
& \quad + \E \Big[ S(g)(F) \Big( \CenteredGamma_{2}(F) - 2 \CenteredGamma_1(F) \Big) \Big] \qquad 
   \Big(  \text{this completes the proof of part (a)}\Big)\\
&  {}={} \E\Big[ S(g)'(F) \Big( \CenteredGamma_1(F) - 2 F \Big)^2 \Big] - \Big( \E [S(g)(F)]\Big) \Big( \E \left[ \Gamma_{2}(F)\right] - 2 \kappa_2(F) \Big) \\
& \quad - \E\Big[ S \left( S(g)\right)'(F) \Big( \CenteredGamma_2(F) - 2 \CenteredGamma_{1}(F) \Big) \Big( \CenteredGamma_1(F) - 2 F \Big) \Big] - \E \Big[ S \left(S(g)\right)(F) \Big( \Gamma_{3}(F) - 2 \Gamma_2(F) \Big) \Big] \\
& {}={} \E\Big[ S(g)'(F) \Big( \CenteredGamma_1(F) - 2 F \Big)^2 \Big] - \E\Big[ S \left( S(g)\right)'(F) \Big( \CenteredGamma_2(F) - 2 \CenteredGamma_{1}(F) \Big) \Big( \CenteredGamma_1(F) - 2 F \Big) \Big]\\
& \quad - \E \Big[ S \left(S(g)\right)(F) \Big( \CenteredGamma_{3}(F) - 2 \CenteredGamma_2(F) \Big) \Big] - \Big( \E [S(g)(F)]\Big) \Big( \frac{1}{2} \kappa_3(F) - 2 \kappa_2(F) \Big) \\
& \quad + \Big( \E \big[ S \left( S(g)\right)(F) \big]\Big) \Big( \E \big[\Gamma_3(F) \big] - \kappa_3(F) \Big).
\end{align*}
Note that we cannot translate $\E[\Gamma_3(F)]$ directly into the fourth cumulant, but instead by Proposition \ref{Prop:RelationOldAndNewGamma} part \textit{(c)}, we have $\E[\Gamma_3(F)] = \frac{1}{3} \kappa_4(F) - \Var(\Gamma_1(F))$. The variance term can be written as
\begin{align*}
\Var\big( \Gamma_1(F) \big) & = \Var\big( \Gamma_1(F) - 2 F \big) - 4 \kappa_2(F) + 4 \E\big[ F \Gamma_1(F) \big] \\
& = \Var\big( \Gamma_1(F) - 2 F \big) - 4 \kappa_2(F) + 4 \E\big[ \Gamma_2(F) \big] \\
& = \Var\big( \Gamma_1(F) - 2 F \big) - 4 \kappa_2(F) + 2 \kappa_3(F).
\end{align*}
Putting everything together, the claim follows.
\end{proof}
\begin{rem}\label{rem:cumulants-vanish}
We point out that for both linear cumulant combinations appearing in the right hand sides of parts (a) and (b) in Lemma \ref{lem:komaki2} it holds that 
\begin{align*}
 \frac{1}{2} \kappa_3(G(\nu)) - 2 \kappa_2(G(\nu)) =  0, \quad \text{and} \quad 
 \frac{1}{3} \kappa_4(G(\nu)) - 3 \kappa_3(G(\nu)) + 4 \kappa_2(G(\nu))  = 0.
 \end{align*}
	\end{rem}

Now, we are ready to state the main result of this section.

\begin{Thm}\label{thm:MainMalliavinSteinBound}
	Let $F$ be a centered random variable admitting a finite chaos expansion with $\Var(F) = 2 \nu$. Let $G(\nu) \sim CenteredGamma(\nu)$. Then there exists a constant $C>0$ (only depending on $\nu$), such that
	\begin{align}\label{main-theoretical-estimate}
	d_2(F,G(\nu)) \leq C \, \bigg\{ & \Var \left(  \Gamma_1(F) - 2F \right) \notag  + \sqrt{\Var \left(  \Gamma_2(F) - 2 \Gamma_1(F) \right) } \times \sqrt{ \Var  \left(  \Gamma_1(F) -2F \right)} \notag \\
	& + \sqrt{ \Var \Big(  \big( \Gamma_3(F) - 2 \Gamma_2(F) \big)  - 2 \big( \Gamma_2(F) - 2 \Gamma_1(F) \big)\Big) } \notag \\
	& + \Big \vert \kappa_3(F) - \kappa_3(G(\nu)) \Big \vert  +  \Big \vert \kappa_4(F) - \kappa_4(G(\nu))  \Big \vert \bigg\}.
	\end{align}
\end{Thm}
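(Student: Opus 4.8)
The plan is to run a single test function $h\in\mathcal{H}_2$ through the Stein--Malliavin scheme while keeping all expectations (rather than passing to $\E\vert\cdot\vert$ as in Proposition~\ref{prop:first-general-upper-bound}), and to dispose of the one stubborn leftover term by the operator calculus of Section~\ref{sec:operator-theory}. Exactly as in the proof of Proposition~\ref{prop:first-general-upper-bound}, the Malliavin integration-by-parts formula \eqref{eq:IntegrationByParts} gives $\E[h(F)]-\E[h(G(\nu))]=\E\big[S(h)'(F)\,(\CenteredGamma_1(F)-2F)\big]$ for $h\in\mathcal{H}_2$. First I would set $g:=S(h)'$; by Theorem~\ref{thm:Gamma-Stein-Solution-Properties} the function $g$ and its derivative are bounded by constants depending only on $\nu$, so Lemma~\ref{lem:komaki2}(a) applies and writes $-\big(\E[h(F)]-\E[h(G(\nu))]\big)=\E[g(F)(2F-\CenteredGamma_1(F))]$ as the sum of $\E[S(g)'(F)(\CenteredGamma_1(F)-2F)^2]$, the term $\E[S(g)(F)(\CenteredGamma_2(F)-2\CenteredGamma_1(F))]$, and $-\E[S(g)(F)]\big(\tfrac12\kappa_3(F)-2\kappa_2(F)\big)$. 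Two of these three are routine: iterating Theorem~\ref{thm:Gamma-Stein-Solution-Properties} makes $\Vert S(g)\Vert_\infty$ and $\Vert S(g)'\Vert_\infty$ again constants depending only on $\nu$, so Cauchy--Schwarz bounds the first by a constant times $\Var(\Gamma_1(F)-2F)$ (note that $\CenteredGamma_1(F)-2F$ is centered, so its second moment equals $\Var(\Gamma_1(F)-2F)$), and, using $\kappa_2(F)=2\nu=\kappa_2(G(\nu))$ together with Remark~\ref{rem:cumulants-vanish} to rewrite $\tfrac12\kappa_3(F)-2\kappa_2(F)=\tfrac12\big(\kappa_3(F)-\kappa_3(G(\nu))\big)$, the third is a constant times $\vert\kappa_3(F)-\kappa_3(G(\nu))\vert$. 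These supply the first, fourth and fifth terms of \eqref{main-theoretical-estimate}.

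The hard part will be the surviving term $\E[S(g)(F)(\CenteredGamma_2(F)-2\CenteredGamma_1(F))]$: a bare Cauchy--Schwarz would only give $\sqrt{\Var(\Gamma_2(F)-2\Gamma_1(F))}$, which is not one of the quantities allowed in \eqref{main-theoretical-estimate}, and naively iterating Lemma~\ref{lem:AuxiliarlyLemmaForMainTheorem} generates a recursion that never closes (each application of $S$ may enlarge sup-norms by the factor $c_\nu\geq1$, so the remainders do not decay). Instead I would consider the bounded linear functional $L(\phi):=\E[\phi(F)(\CenteredGamma_2(F)-2\CenteredGamma_1(F))]$ on the Lipschitz space $\mathcal{B}$, apply Lemma~\ref{lem:AuxiliarlyLemmaForMainTheorem} with $r=2$ to an arbitrary $\phi\in\mathcal{B}$ (the lemma extends verbatim to bounded Lipschitz test functions, its proof relying only on \eqref{eq:IntegrationByParts} and the Lipschitz version of the chain rule \eqref{eq:ChainRule}, together with the absolute continuity of the law of $F$), then center $\Gamma_3-2\Gamma_2$ and split $\CenteredGamma_3(F)-2\CenteredGamma_2(F)=\big[(\CenteredGamma_3(F)-2\CenteredGamma_2(F))-2(\CenteredGamma_2(F)-2\CenteredGamma_1(F))\big]+2(\CenteredGamma_2(F)-2\CenteredGamma_1(F))$. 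This produces an identity of the form $L(\phi)+2L(S(\phi))=\Lambda(\phi)$ valid for all $\phi\in\mathcal{B}$, where $\Lambda\in\mathcal{B}^{*}$ satisfies, by Cauchy--Schwarz and Theorem~\ref{thm:Gamma-Stein-Solution-Properties},
\[ \Vert\Lambda\Vert_{\mathcal{B}^{*}}\leq C_\nu\Big(\sqrt{\Var(\Gamma_2-2\Gamma_1)}\,\sqrt{\Var(\Gamma_1-2F)}+\sqrt{\Var\big((\Gamma_3-2\Gamma_2)-2(\Gamma_2-2\Gamma_1)\big)}+\big\vert\E[\Gamma_3-2\Gamma_2]\big\vert\Big), \]
and where Proposition~\ref{Prop:RelationOldAndNewGamma}(c), the variance identity established in the proof of Lemma~\ref{lem:komaki2}, and Remark~\ref{rem:cumulants-vanish} give $\E[\Gamma_3-2\Gamma_2]=\tfrac13\big(\kappa_4(F)-\kappa_4(G(\nu))\big)-3\big(\kappa_3(F)-\kappa_3(G(\nu))\big)-\Var(\Gamma_1-2F)$.

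Reading the last identity as $L\circ(I+2S)=\Lambda$ in $\mathcal{B}^{*}$ and using that $I+2S$ is a continuous bijection of $\mathcal{B}$ with bounded inverse (Theorem~\ref{thm:fredholm-alternative} and Proposition~\ref{prop:universal-bound-solution}, both with $\lambda=2$), one obtains $L=\Lambda\circ(I+2S)^{-1}$ and hence $\Vert L\Vert_{\mathcal{B}^{*}}\leq\Vert(I+2S)^{-1}\Vert\,\Vert\Lambda\Vert_{\mathcal{B}^{*}}$; evaluating $L$ at $\phi=S(g)$, whose $\mathcal{B}$-norm is a constant depending only on $\nu$, then bounds $\vert\E[S(g)(F)(\CenteredGamma_2(F)-2\CenteredGamma_1(F))]\vert$ by $C_\nu\Vert\Lambda\Vert_{\mathcal{B}^{*}}$, which supplies precisely the second and third terms of \eqref{main-theoretical-estimate} (and once more the first, fourth and fifth). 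Summing the three contributions and taking the supremum over $h\in\mathcal{H}_2$ yields \eqref{main-theoretical-estimate}. The whole difficulty is concentrated in that single term, and the point of the operator-theoretic splitting is exactly this: to recast the would-be infinite recursion as one linear equation in the dual of the Lipschitz space and to invert $I+2S$ through the Fredholm alternative, which is available because $S$ is compact (Proposition~\ref{prop:being-compact-operator}) with no non-zero eigenvalue (Proposition~\ref{prop:no-eigenvalue}). The only point requiring genuine care is the extension of Lemma~\ref{lem:AuxiliarlyLemmaForMainTheorem} to bounded Lipschitz test functions, since $(I+2S)^{-1}S(g)$ need not be more regular than Lipschitz.
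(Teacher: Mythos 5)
Your proof is correct, and it reaches \eqref{main-theoretical-estimate} by what is essentially the adjoint of the paper's argument. The paper first reduces to $\sup_{h}\vert\E[h(F)(\CenteredGamma_1(F)-2F)]\vert$ and then \emph{pre}-composes with $(I+2S)^{-1}$: it writes the test function as $h=g+2S(g)$ with $\Vert g\Vert_{\mathcal{B}}$ uniformly bounded (Theorem \ref{thm:fredholm-alternative} and Proposition \ref{prop:universal-bound-solution}), expands the first piece with Lemma \ref{lem:komaki2}(b) and the second with Lemma \ref{lem:komaki2}(a), and the term $-\E[S(S(g))(F)(\CenteredGamma_3(F)-2\CenteredGamma_2(F))]$ from (b) combines with $+2\,\E[S(S(g))(F)(\CenteredGamma_2(F)-2\CenteredGamma_1(F))]$ from (a) to produce the double difference. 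You instead stop after one application of Lemma \ref{lem:komaki2}(a), isolate the stubborn term as the value at $S(g)$ of the functional $L(\phi)=\E[\phi(F)(\CenteredGamma_2(F)-2\CenteredGamma_1(F))]$, derive from Lemma \ref{lem:AuxiliarlyLemmaForMainTheorem} (with $r=2$) the identity $L\circ(I+2S)=\Lambda$, and \emph{post}-compose with $(I+2S)^{-1}$ in $\mathcal{B}^{*}$. The algebra is identical -- your splitting of $\CenteredGamma_3-2\CenteredGamma_2$ is exactly the cancellation the paper engineers by choosing $\lambda=2$ -- and both arguments rest on the same Fredholm input (Propositions \ref{prop:no-eigenvalue} and \ref{prop:being-compact-operator}). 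What your version buys: it keeps signed expectations throughout, avoiding the slightly awkward inner absolute value in Proposition \ref{prop:first-general-upper-bound}; it needs only part (a) of Lemma \ref{lem:komaki2} plus one further use of Lemma \ref{lem:AuxiliarlyLemmaForMainTheorem}; and it makes transparent why inverting $I+2S$ is unavoidable, since the recursion $L=-2L\circ S+\text{(small)}$ need not contract ($\Vert 2S\Vert$ can exceed $1$, so no Neumann series). The two points you flag are indeed the only ones needing care, and they are no worse than in the paper's own proof: Lemma \ref{lem:AuxiliarlyLemmaForMainTheorem} must be applied to test functions that are merely bounded Lipschitz (the paper already applies it to $h$ and $S(h)$, which are no smoother), and every constant produced by $S$ and by $(I+2S)^{-1}$ depends only on $\nu$. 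Your reduction of $\E[\Gamma_3(F)-2\Gamma_2(F)]$ via Proposition \ref{Prop:RelationOldAndNewGamma}(c), the variance identity from the proof of Lemma \ref{lem:komaki2}, and Remark \ref{rem:cumulants-vanish} is also correct, so each remainder lands on one of the five admissible terms.
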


\begin{proof}
 Using Proposition   \ref{prop:first-general-upper-bound}, Theorem \ref{thm:fredholm-alternative} with $\lambda=2$, and Proposition \ref{prop:universal-bound-solution} we obtain that 
 \begin{equation*}
 \begin{split}
 	d_2(F,G(\nu)) &\leq C  \sup_{h\in \mathcal{B}_{1,1}} \E \, \Big \vert h(F)  \big(  \CenteredGamma_1(F) - 2 F \big) \Big \vert\\
 	& \leq  C \sup_{h\in \mathcal{B}_{1,1}} \E \, \Big \vert \big( h(F) + 2 S(h)(F) \big)  \big(  \CenteredGamma_1(F) - 2 F \big) \Big \vert\\
 	\end{split}
 \end{equation*}
 where $C$ stands for a general constant depending only on the parameter $\nu$. Now, we apply Lemma \ref{lem:komaki2} item (b) on $ \E \left[ h(F) \big(  \CenteredGamma_1(F) - 2 F \big)  \right]$, and item (a) on $\E \left[ S(h)(F) \big(  \CenteredGamma_1(F) - 2 F \big) \right]$. Then putting everything together the result follows by applying Cauchy-Schwarz inequality, Theorem \ref{thm:Gamma-Stein-Solution-Properties}, as well as using the fact that $\kappa_2(G(\nu)) = \kappa_2(F) = 2 \nu$, $\kappa_3(G(\nu))= 8\nu$ and $\kappa_4(G(\nu)) = 48 \nu$, see \eqref{eq:CenteredGammaCumulants}.
\end{proof}

\begin{rem}\label{rem:Operator-Theory-Essential}
The splitting technique implemented in the proof of Theorem \ref{thm:MainMalliavinSteinBound} by using operator theory is vital to obtain an optimal upper bound. In fact, not doing it, instead of estimate \eqref{main-theoretical-estimate}, the best estimate one can achieve (under the assumption in Theorem \ref{thm:MainMalliavinSteinBound}) is a similar bound as \eqref{main-theoretical-estimate} with the quantity $\sqrt{ \Var   \left( \Gamma_3(F) - 2 \Gamma_2(F) \right) }$ instead of
\[
\sqrt{ \Var \Big(  \big( \Gamma_3(F) - 2 \Gamma_2(F) \big)  - 2 \big( \Gamma_2(F) - 2 \Gamma_1(F) \big)\Big) }.
\]
On the other hand, it is not difficult to see that for a sequence $\{ F_n= \sum_{1 \le i \le \nu} c_{i,n} (N^2_i -1): n \ge 1\}$ in the second Wiener chaos with a finite number of non-zero spectral coefficients such that for every $i=1,\ldots,\nu$, $c_{i,n} \to 1$ as $n \to \infty$ it holds that
\[
\Var   \left( \Gamma_3(F_n) - 2 \Gamma_2(F_n) \right) \approx_C \Var   \left( \Gamma_2(F_n) - 2 \Gamma_1(F_n) \right) \approx_C \Var   \left( \Gamma_1(F_n) - 2 F_n \right),
\]
resulting in a suboptimal rate. See also illustrating Example \ref{ex:naive-example} for further clarifications.

	\end{rem}

\subsection{The Upper Bound: Second Wiener Chaos}
In the present section, in order to handle the variance quantities of the Gamma operators appearing in the right hand side of estimate \eqref{main-theoretical-estimate} in terms of cumulants, we consider the case of second Wiener chaos. In this setting, the connection is apparent thanks to Lemma \ref{lem:Var(Gamma_r-2Gamma_r-1)}.
\begin{prop}\label{prop:Var-estimate-1}
	Let $\nu >0$, and $F= I_2 (f)$ be in the second Wiener chaos such that $\E[F^2]=2\nu$. Then, for every $r\ge 1$, with constant $C=4\nu$, we have 
	\begin{equation}\label{eq:1}
	\Var \left( \Gamma_{r+1} (F) - 2 \Gamma_r (F) \right) \le \, C \Var \left( \Gamma_r (F) - 2 \Gamma_{r-1}(F) \right) \le \, C^{r} \, \Var \left( \Gamma_1 (F) - 2 F \right).
	\end{equation}
	In particular, by choosing $r=1$, we obtain 
	\begin{equation}\label{eq:optimal-Var1}
	\Var \left( \Gamma_{2} (F) - 2 \Gamma_1 (F) \right) \le (4\nu) \, \Var \left( \Gamma_1 (F) - 2 F \right).
	\end{equation}
\end{prop}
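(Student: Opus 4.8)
The plan is to reduce both inequalities to the explicit spectral formula supplied by Lemma~\ref{lem:Var(Gamma_r-2Gamma_r-1)}, and then to exploit the variance normalisation $\E[F^2]=2\nu$ in order to control the eigenvalues $c_{f,i}$ of the Hilbert--Schmidt operator $A_f$.

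The first observation I would record is a uniform bound on the eigenvalues. Since $F=I_2(f)$ is a centered element of the second Wiener chaos, $\E[F^2]=\kappa_2(F)$, and by Proposition~\ref{Prop:2choas-properties} (with $p=2$) this equals $2\sum_{i\ge1}c_{f,i}^2$. Hence the hypothesis $\E[F^2]=2\nu$ forces $\sum_{i\ge1}c_{f,i}^2=\nu$, and in particular the crucial estimate $c_{f,i}^2\le\nu$ for every $i$.

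Next, for the one-step inequality I would invoke Lemma~\ref{lem:Var(Gamma_r-2Gamma_r-1)} at levels $r+1$ and $r$, writing
\[
\Var\bigl(\Gamma_{r+1}(F)-2\Gamma_r(F)\bigr)=2^{2r+3}\sum_{i\ge1}c_{f,i}^{2r+2}(c_{f,i}-1)^2,\qquad \Var\bigl(\Gamma_{r}(F)-2\Gamma_{r-1}(F)\bigr)=2^{2r+1}\sum_{i\ge1}c_{f,i}^{2r}(c_{f,i}-1)^2 .
\]
Since $2^{2r+3}=4\cdot2^{2r+1}$ and, by the previous step, $c_{f,i}^{2r+2}(c_{f,i}-1)^2=c_{f,i}^{2}\,\bigl(c_{f,i}^{2r}(c_{f,i}-1)^2\bigr)\le\nu\,c_{f,i}^{2r}(c_{f,i}-1)^2$ for each $i$, summing over $i$ yields $\Var(\Gamma_{r+1}(F)-2\Gamma_r(F))\le 4\nu\,\Var(\Gamma_{r}(F)-2\Gamma_{r-1}(F))$, i.e.\ the first inequality with $C=4\nu$. (For $r=1$ one reads $\Gamma_0(F)=F$, consistently with the convention adopted in Lemma~\ref{lem:Var(Gamma_r-2Gamma_r-1)}.) Iterating this bound $r-1$ times then gives $\Var(\Gamma_r(F)-2\Gamma_{r-1}(F))\le(4\nu)^{r-1}\Var(\Gamma_1(F)-2F)$, so $C\,\Var(\Gamma_r(F)-2\Gamma_{r-1}(F))\le C^{r}\,\Var(\Gamma_1(F)-2F)$, which is the second inequality; specialising to $r=1$ produces \eqref{eq:optimal-Var1}.

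I do not anticipate any genuine obstacle: the entire argument hinges on the elementary remark that the normalisation $\E[F^2]=2\nu$ pins $\sum_i c_{f,i}^2$ to $\nu$ and hence bounds every eigenvalue by $\sqrt{\nu}$ in absolute value, after which the one-step estimate is a term-by-term comparison in the series of Lemma~\ref{lem:Var(Gamma_r-2Gamma_r-1)}. The only points that require a little care are the bookkeeping of the powers of $2$ (the identity $2^{2r+3}=4\cdot2^{2r+1}$ being exactly what produces the constant $4\nu$) and the boundary convention $\Gamma_0(F)=F$ when the lemma is applied at $r=1$.
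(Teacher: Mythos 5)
Your proof is correct and follows essentially the same route as the paper: both rest on the spectral formula of Lemma \ref{lem:Var(Gamma_r-2Gamma_r-1)} together with the normalisation $\Tr(A_f^2)=\sum_i c_{f,i}^2=\nu$, and then iterate the one-step bound. The only cosmetic difference is that the paper phrases the comparison as the trace inequality $\Tr(AB)\le\Tr(A)\Tr(B)$ for non-negative operators, whereas you carry out the equivalent (and slightly more elementary) term-by-term eigenvalue estimate $c_{f,i}^{2}\le\nu$.
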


\begin{proof}
Let's prove the first estimate in \eqref{eq:1}. Then the second estimate could be proven by iteration using similar arguments. Let $r \ge 1$. Denote by $A_f$ the associated Hilbert-Schmidt operator. As in the proof of Lemma \ref{lem:Var(Gamma_r-2Gamma_r-1)}, we can write  
\begin{align*}
\Var \left( \Gamma_{r+1} (F) - 2 \Gamma_r (F) \right) & = 2^{2r+3} \Tr \left(   (A^{r+2}_f  - A^{r+1}_f )^2 \right)
 =  2^{2r+3} \Tr \left(   A^2_f (A^{r+1}_f  - A^{r}_f )^2 \right)\\
& \le  2^{2r+3} \Tr (A^2_f)  \times \Tr \left(   (A^{r+1}_f  - A^{r}_f )^2 \right)\\
&= 4 \nu \, \Var \left( \Gamma_{r} (F) - 2 \Gamma_{r-1} (F) \right),
\end{align*}
where in the third step, we have used the trace inequality $\Tr (AB) \le \Tr(A) \, \Tr(B)$ for non-negative operators $A ,B \ge 0$, see \cite{Liu}.
\end{proof}

\begin{rem}
The estimates in \eqref{eq:1} can also deduce from representation \eqref{eq:CentredGammaInTermsOfContraction} together with  the classical estimate $(4.4)$ in \cite[Lemma 4.2]{OptBerryEsseenRates}.
	\end{rem}

\begin{prop}\label{prop:Var-estimate-2}
	Let $\nu >0$, and $F= I_2 (f)$ in the second Wiener chaos such that $\E[F^2]=2\nu$. Assume $r\ge 1$. Then there exists a general constant $C$ (possibly depending on the parameters $\nu$ and $r$) such that
	\begin{align*}
	\Var & \Big( \left(  \Gamma_{2r+1}(F) - 2 \Gamma_{2r}(F) \right) -2 \left(  \Gamma_{2r}(F) - 2 \Gamma_{2r-1}(F) \right)  \Big)  \le 2\,  \Var^{\, 2} \left(  \Gamma_{r}(F) -2 \Gamma_{r-1} (F) \right)\\
	&  \qquad \le_C \Var \left( \Gamma_{r-1}(F) - 2 \Gamma_{r-2}(F) \right) \times \Var \left( \Gamma_{r+1}(F) - 2 \Gamma_{r}(F) \right).
	\end{align*}
	In particular, by choosing $r=1$, we obtain the crucial estimate  
		\begin{equation}\label{eq:optimal-Var2}
	\Var  \Big( \left(  \Gamma_{3}(F) - 2 \Gamma_{2}(F) \right) -2 \left(  \Gamma_{2}(F) - 2 F \right)  \Big)  \le 2 \Var^{\, 2} \left(  \Gamma_{1}(F) -2 F \right).
	\end{equation}	
\end{prop}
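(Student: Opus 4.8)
The plan is to collapse the whole expression inside the left-hand variance into a single double Wiener--It\^o integral and then pass to the spectrum $\{c_{f,i}\}_{i\ge1}$ of the Hilbert--Schmidt operator $A_f$, after which both inequalities become one-line elementary estimates. Since a variance does not see additive constants, I may work with the centered operators, and the contraction representation \eqref{eq:CentredGammaInTermsOfContraction}, i.e.\ $\CenteredGamma_j(F)=2^j\,I_2\big(f\contIterated{1}{j+1}f\big)$, lets me rewrite the combination $\Gamma_{2r+1}(F)-4\Gamma_{2r}(F)+4\Gamma_{2r-1}(F)$, in its centered form, as
\[
\CenteredGamma_{2r+1}(F)-4\CenteredGamma_{2r}(F)+4\CenteredGamma_{2r-1}(F)\;=\;2^{2r+1}\,I_2\!\Big(f\contIterated{1}{2r+2}f-2\,f\contIterated{1}{2r+1}f+f\contIterated{1}{2r}f\Big),
\]
a genuine second finite difference of the contractions once the common prefactor $2^{2r+1}$ is pulled out (the coefficient $-4$ of $\CenteredGamma_{2r}(F)$ contributing $-4\cdot2^{2r}=-2\cdot2^{2r+1}$). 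Proceeding exactly as in the proof of Lemma \ref{lem:Var(Gamma_r-2Gamma_r-1)}, the isometry \eqref{eq:IsometryProperty} together with \eqref{eq:SecondWienerChaosCumulantFormulaEigenvalues} --- the Hilbert--Schmidt operator attached to $f\contIterated{1}{p}f$ being $A_f^p$, hence that of the bracketed kernel being $A_f^{2r}(A_f-I)^2$ --- then gives
\[
\Var\!\Big(\big(\Gamma_{2r+1}(F)-2\Gamma_{2r}(F)\big)-2\big(\Gamma_{2r}(F)-2\Gamma_{2r-1}(F)\big)\Big)=2^{4r+3}\,\Tr\!\Big(\big(A_f^{2r}(A_f-I)^2\big)^2\Big)=2^{4r+3}\sum_{i\ge1}c_{f,i}^{4r}(c_{f,i}-1)^4,
\]
while Lemma \ref{lem:Var(Gamma_r-2Gamma_r-1)} already records $\Var\big(\Gamma_s(F)-2\Gamma_{s-1}(F)\big)=2^{2s+1}\sum_{i\ge1}c_{f,i}^{2s}(c_{f,i}-1)^2$ for every $s\ge1$ (with the convention $\Gamma_0(F)=F$).

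For the first inequality I would set $a_i:=c_{f,i}^{2r}(c_{f,i}-1)^2=\big(c_{f,i}^{r}(c_{f,i}-1)\big)^2\ge0$ and use the trivial bound $\sum_i a_i^2\le\big(\sum_i a_i\big)^2$, which holds because all cross terms $a_ia_j$ are nonnegative. Multiplying through by $2^{4r+3}$, this is exactly
\[
\Var\!\Big(\big(\Gamma_{2r+1}(F)-2\Gamma_{2r}(F)\big)-2\big(\Gamma_{2r}(F)-2\Gamma_{2r-1}(F)\big)\Big)\;\le\;2\Big(2^{2r+1}\sum_{i\ge1}a_i\Big)^2\;=\;2\,\Var^{\,2}\big(\Gamma_r(F)-2\Gamma_{r-1}(F)\big).
\]
Specializing to $r=1$, with $\Gamma_0(F)=F$, yields the crucial estimate \eqref{eq:optimal-Var2} that feeds into Theorem \ref{thm:MainMalliavinSteinBound}.

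For the chain inequality --- which is meaningful for $r\ge2$, the case $r=2$ using the convention $\Gamma_0(F)=F$ --- I would factor $a_i=\mu_i\rho_i$ with $\mu_i:=c_{f,i}^{r-1}(c_{f,i}-1)$ and $\rho_i:=c_{f,i}^{r+1}(c_{f,i}-1)$ and apply the Cauchy--Schwarz inequality to get $\big(\sum_{i\ge1}a_i\big)^2=\big(\sum_{i\ge1}\mu_i\rho_i\big)^2\le\big(\sum_{i\ge1}\mu_i^2\big)\big(\sum_{i\ge1}\rho_i^2\big)$. Since, again by Lemma \ref{lem:Var(Gamma_r-2Gamma_r-1)}, $\sum_{i\ge1}\mu_i^2=2^{-(2r-1)}\Var\big(\Gamma_{r-1}(F)-2\Gamma_{r-2}(F)\big)$ and $\sum_{i\ge1}\rho_i^2=2^{-(2r+3)}\Var\big(\Gamma_{r+1}(F)-2\Gamma_r(F)\big)$, feeding this into the identity of the first step delivers the asserted bound with absolute constant $C=2$.

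There is no analytic obstacle here: all the series converge because $F$ lies in the second Wiener chaos, so $A_f$ is Hilbert--Schmidt and $A_f^p$ is trace class for $p\ge2$, and the argument is just the contraction/operator dictionary plus two one-line inequalities. The one step requiring care --- and the only place where a slip is easy --- is the power-of-$2$ bookkeeping in the initial collapse: one must check that the coefficient $-4$ of $\CenteredGamma_{2r}(F)$ produces the factor $-2$ (not $-1$) after pulling out $2^{2r+1}$, so that a genuine second finite difference with associated operator $A_f^{2r}(A_f-I)^2$ appears, and that squaring the prefactor $2^{2r+1}$ against the variance identity $\E[I_2(k)^2]=2\|k\|^2$ indeed yields $2^{4r+3}$.
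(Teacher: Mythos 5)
Your proof is correct, and the opening step (collapsing the centered combination into $2^{2r+1}I_2\big(f\contIterated{1}{2r+2}f-2f\contIterated{1}{2r+1}f+f\contIterated{1}{2r}f\big)$ and computing the variance as $2^{4r+3}$ times a trace) is exactly the paper's starting point; the power-of-two bookkeeping you flag as delicate checks out. Where you diverge is in how the two inequalities are justified. For the first one, the paper keeps everything at the level of contractions: it writes the second-difference kernel as $g\otimes_1 g$ with $g=f\contIterated{1}{r+1}f-f\contIterated{1}{r}f$ and invokes the estimate $\norm{g\otimes_1 g}\le\norm{g}^2$ from \cite[Lemma 4.2]{OptBerryEsseenRates}; your diagonalized version $\sum_i a_i^2\le(\sum_i a_i)^2$ for $a_i\ge0$ is precisely that inequality read off the spectrum of $A_f$, so it is the same bound proved in an elementary, self-contained way. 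For the chain inequality, the paper cites a Gr\"uss-type trace inequality from \cite[Corollary 1]{D-trace-1}, whereas you replace it by Cauchy--Schwarz on the eigenvalue sums via the factorization $a_i=\mu_i\rho_i$; this again avoids an external reference and yields the explicit constant $C=2$. Your remark that the chain inequality only makes sense for $r\ge2$ (since $\Gamma_{r-2}$ appears) is a fair clarification that the paper glosses over; the crucial estimate \eqref{eq:optimal-Var2} used downstream only needs the first inequality at $r=1$, which you handle correctly with the convention $\Gamma_0(F)=F$.
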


\begin{proof}
For the first estimate, using representation \eqref{eq:CentredGammaInTermsOfContraction} we can write
\begin{multline*}
\Var  \Big( \left(  \Gamma_{2r+1}(F) - 2 \Gamma_{2r}(F) \right) -2 \left(  \Gamma_{2r}(F) - 2 \Gamma_{2r-1}(F) \right)  \Big)  \\= 2^{4r+3} \Big \Vert f \contIterated{1}{2r+2} f - 2 f \contIterated{1}{2r+1} f + f \contIterated{1}{2r} f \Big \Vert^2_{\HH^{\otimes 2}}\\
= 2^{4r+3} \Big \Vert  \left(    f \contIterated{1}{r+1} f -  f \contIterated{1}{r} f \right)  \otimes_1 \left(    f \contIterated{1}{r+1} f -  f \contIterated{1}{r} f \right)  \Big \Vert^2_{\HH^{\otimes 2}}\\
\le 2^{4r+3} \Big \Vert  f \contIterated{1}{r+1} f -  f \contIterated{1}{r} f  \Big \Vert^4_{\HH^{\otimes 2}}\\
=  2 \,  \Var^{\, 2} \left(  \Gamma_{r}(F) -2 \Gamma_{r-1} (F) \right),
\end{multline*}
where we have used the classical estimate $(4.4)$ in \cite[Lemma 4.2]{OptBerryEsseenRates}. The second estimate is a direct application of \cite[Corollary 1]{D-trace-1} with $P=(A^{r+1}_f - A^{r}_f)^2, C=A^2_f$ combined with $\Var\left( \Gamma_{r+1}(F) - 2 \Gamma_{r}(F) \right) = 2^{2r+3} \Tr \left( (A^{r+2}_f - A^{r+1}_f)^2 \right)$ for  every $r \ge 0$, see the proof of Lemma \ref{lem:Var(Gamma_r-2Gamma_r-1)}.
\end{proof}

	\subsection{The Lower Bound: Second Wiener Chaos}
	\begin{prop}\label{prop:lower-bound}
	Let $\nu >0$, and $F= I_2 (f)$ be in the second Wiener chaos such that $\E[F^2]=2\nu$. Then there exists a general constant $C$ (possibly depending on the parameter $\nu$) such that 
	\begin{equation*}
	 d_{2} (F, G(\nu)) \ge_C \,  \mathbf{M} (F),
	\end{equation*}
	where the quantity $\mathbf{M} (F)$ is given by \eqref{eq:M}.
\end{prop}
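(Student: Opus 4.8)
The plan is to prove the lower bound by working at the level of characteristic functions rather than with Stein's method, and then to extract $\kappa_3(F)$ and $\kappa_4(F)$ from the Taylor expansion of $\log\phi_F-\log\phi_{G(\nu)}$ with the help of a uniform analyticity estimate; here $\phi_F(t)=\E[e^{itF}]$ and $\phi_{G(\nu)}(t)=\E[e^{itG(\nu)}]$. First I would reduce to characteristic functions: for every $|t|\le 1$ the functions $x\mapsto\cos(tx)$ and $x\mapsto\sin(tx)$ lie in $\mathcal H_2$ (their first two derivatives are bounded by $|t|\le1$ and $t^2\le1$), so testing $d_2$ against them yields
\begin{equation*}
d_2(F,G(\nu))\ \ge\ \tfrac1{\sqrt2}\,\bigl|\phi_F(t)-\phi_{G(\nu)}(t)\bigr|\qquad(|t|\le1),
\end{equation*}
and it suffices to exhibit a single point $t_\nu\in(0,1]$, depending only on $\nu$, with $\bigl|\phi_F(t_\nu)-\phi_{G(\nu)}(t_\nu)\bigr|\gtrsim_\nu\mathbf M(F)$.

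Next I would set up a uniform factorisation. By Proposition~\ref{Prop:2choas-properties}, $F=\sum_i c_{f,i}(N_i^2-1)$ with $\sum_i c_{f,i}^2=\nu$ and hence $|c_{f,i}|\le\sqrt\nu$, so $\phi_F(t)=\prod_i e^{-itc_{f,i}}(1-2itc_{f,i})^{-1/2}$ and $\phi_{G(\nu)}(t)=\bigl(e^{-it}(1-2it)^{-1/2}\bigr)^\nu$. Both functions extend analytically and without zeros to a \emph{fixed} disc $D_{\rho_0}$, $\rho_0=\rho_0(\nu)$, with $c_\nu\le|\phi_F|,|\phi_{G(\nu)}|\le C_\nu$ uniformly in $F$ (this is the only place where $|c_{f,i}|\le\sqrt\nu$ enters). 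Writing $\phi_F=\phi_{G(\nu)}e^{\psi_F}$ gives on $D_{\rho_0}$
\begin{equation*}
\psi_F(t)=\sum_{k\ge3}\frac{2^{k-1}(it)^k}{k}\,A_k,\qquad A_k:=\sum_i c_{f,i}^2\bigl(c_{f,i}^{\,k-2}-1\bigr)=\frac{\kappa_k(F)-\kappa_k(G(\nu))}{2^{k-1}(k-1)!},
\end{equation*}
so $A_3=\tfrac18(\kappa_3(F)-\kappa_3(G(\nu)))$, $A_4=\tfrac1{48}(\kappa_4(F)-\kappa_4(G(\nu)))$, and $\mathbf M(F)\asymp_\nu\max(|A_3|,|A_4|)=:\mathbf M'$. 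The structural point I would exploit is that, for real $t$, the $t^3$-coefficient of $\psi_F$ is purely imaginary and the $t^4$-coefficient purely real, so the $\kappa_3$- and $\kappa_4$-contributions never cancel.

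Then I would control the Taylor tail of $\psi_F$ by $\mathbf M'$. Expanding $x^{k-2}$ about $x=1$ and using $\sum_i c_{f,i}^2=\nu$ gives $A_k=(k-2)A_3+\sum_{j=2}^{k-2}\binom{k-2}{j}V_j$ with $V_j:=\sum_i c_{f,i}^2(c_{f,i}-1)^j$, where $V_2=\tfrac18\Var(\Gamma_1(F)-2F)=:V$ by Lemma~\ref{lem:Var(Gamma_r-2Gamma_r-1)} and $|V_j|\le(\sqrt\nu+1)^{j-2}V$; hence $|A_k-(k-2)A_3|\le C_\nu(\sqrt\nu+2)^kV$ with $C_\nu$ independent of $k$, and since $A_4=2A_3+V$ one checks $|A_3|+V\le4\mathbf M'$. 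Feeding this into the series yields, for real $|t|\le\rho_0$,
\begin{align*}
\bigl|\operatorname{Im}\psi_F(t)\bigr|&\ \ge\ \tfrac43|A_3|\,|t|^3-C_\nu\mathbf M'\,|t|^5,\\
\bigl|\operatorname{Re}\psi_F(t)\bigr|&\ \ge\ 2|A_4|\,|t|^4-C_\nu\mathbf M'\,|t|^6.
\end{align*}
Fixing $t_\nu\in(0,\rho_0]$ with $C_\nu t_\nu^2\le\tfrac23$: if $|A_3|\ge|A_4|$ the first estimate gives $|\psi_F(t_\nu)|\ge\tfrac23 t_\nu^3\mathbf M'$, while if $|A_4|>|A_3|$ the second gives $|\psi_F(t_\nu)|\ge t_\nu^4\mathbf M'$; in either case $|\psi_F(t_\nu)|\ge c_\nu\mathbf M'$.

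Finally I would transfer back. If $\mathbf M(F)$ lies below a threshold $\delta_\nu$, so that $|\psi_F(t_\nu)|\le1$, then $|e^{\psi_F(t_\nu)}-1|\ge\tfrac12|\psi_F(t_\nu)|$, and combining with $|\phi_{G(\nu)}(t_\nu)|\ge c_\nu$ and the reduction of the first paragraph gives $d_2(F,G(\nu))\ge_\nu\mathbf M(F)$. If $\mathbf M(F)>\delta_\nu$, one uses that $\mathbf M(F)$ is bounded above by a $\nu$-constant for all admissible $F$, so it is enough to show $\inf\{d_2(F,G(\nu)):F=I_2(f),\ \E[F^2]=2\nu,\ \mathbf M(F)\ge\delta_\nu\}>0$; a minimising sequence would converge in distribution to $G(\nu)$, forcing the spectral measures $\mu_{F_n}=\sum_i c_{f_n,i}^2\delta_{c_{f_n,i}}$ (mass $\nu$, support in $[-\sqrt\nu,\sqrt\nu]$) to converge weakly to $\nu\delta_1$ by determinacy of the Hausdorff moment problem, hence $\mathbf M(F_n)\to0$, a contradiction. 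I expect the main obstacle to be the uniform tail control in the third paragraph: one must show that \emph{every} Taylor coefficient of $\psi_F$ is $\lesssim_\nu\mathbf M(F)$, with constants growing at most geometrically in $k$ so the series still converges on a disc of $\nu$-dependent radius. This is exactly what lets the evaluation point $t_\nu$ be a \emph{fixed} constant rather than one shrinking with $\mathbf M(F)$ (the latter would only give a bound of order $\mathbf M(F)^{5/2}$), and together with the imaginary/real separation of the $\kappa_3$- and $\kappa_4$-parts it is what produces the sharp linear lower bound.
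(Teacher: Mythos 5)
Your proof is correct, and while it starts from the same basic idea as the paper --- compare $\phi_F$ and $\phi_{G(\nu)}$ and read the cumulant differences off the Taylor coefficients of $\psi_F=\log\phi_F-\log\phi_{G(\nu)}$ --- the way you extract $\kappa_3$ and $\kappa_4$ is genuinely different. The paper bounds $\abs{\phi_F(z)-\phi_{G(\nu)}(z)}\le_C d_2(F,G(\nu))$ on a full complex disc $D_R$ (via an analytic extension of the $d_2$ control into the strip of regularity) and then isolates each individual coefficient of $\psi_F$ by Cauchy's estimates; this makes the tail of the series irrelevant but puts all the weight on the complex-variable extension of the $d_2$ bound. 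You instead stay on the real line, where $d_2(F,G(\nu))\ge\tfrac1{\sqrt2}\abs{\phi_F(t)-\phi_{G(\nu)}(t)}$ for $\abs{t}\le1$ is immediate from the test functions $\cos(tx),\sin(tx)\in\mathcal H_2$, and you recover \emph{both} cumulants from a single fixed evaluation point by exploiting the parity split (odd cumulants feed $\operatorname{Im}\psi_F$, even ones feed $\operatorname{Re}\psi_F$). The price is the extra lemma that every Taylor coefficient $A_k$ of $\psi_F$ is itself $O_\nu((\sqrt\nu+2)^k\,\mathbf M(F))$; your argument for this --- expanding $c_i^{k-2}-1$ binomially around $c_i=1$, identifying $V_1=A_3$ and $V_2=\tfrac18\Var(\Gamma_1(F)-2F)=A_4-2A_3$, and dominating $\abs{V_j}$ by $(\sqrt\nu+1)^{j-2}V_2$ --- is correct and is the real content of your version. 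Two small remarks: the case split at the end is avoidable, since shrinking $t_\nu$ once and for all (using $\abs{A_k}\le 2\nu^{k/2}+\nu$) guarantees $\abs{\psi_F(t_\nu)}\le1$ uniformly over all admissible $F$, so the compactness/moment-determinacy argument for large $\mathbf M(F)$ is not needed; and the constant $\tfrac12$ in $\abs{e^w-1}\ge\tfrac12\abs{w}$ requires $\abs{w}$ somewhat below $1$ (e.g. $\abs{w}\le(2(e-2))^{-1}$), which only shifts your threshold $\delta_\nu$.
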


\begin{proof}
 Fix a real number $\rho >0$ whose range of values will be determined later on. Taking into account the second moment assumption, it is a classical result (see \cite[Chapter $7$]{lukas}) that the characteristic functions $\phi_{F}$ and $\phi_{G(\nu)}$ are analytic inside the strip $\Delta_\nu := \{ z \in \mathbb{C} :  \abs{ \operatorname{Im}z } < \frac{1}{2\sqrt{\nu}} \}$. Moreover, in the strip of regularity $\Delta_\nu$, they follow the integral representations
\[
\phi_{F}(z) = \int_\R e^{izx} \mu(dx) \quad \text{and} \quad \phi_{G(\nu)}(z) = \int_\R e^{izx} \mu_\nu (dx),
\]
where $\mu$ and $\mu_\nu$ stand for the probability measures of $F$ and $G(\nu)$ respectively. Recall that all elements in the second Wiener chaos have exponential moments, see \cite[Proposition $2.7.13$, item (iii)]{n-pe-1}. Denote by $\Omega_{\rho, \nu}$ the domain
\[\Omega_{\rho, \nu}: = \Big\{ z=t + i y \in \mathbb{C} \, : \, \abs{ \operatorname{Re}z } < \rho, \abs{ \operatorname{Im}z } < \min \{ (2\sqrt{\nu})^{-1}, e^{-1}\} \Big\}.
\]
Then for any $z \in \Omega_{\rho,\nu}$, together with a Fubini's argument, we have that
\begin{align*}
\abs[\Big]{ \phi_{F}(z) - \phi_{G(\nu)}(z) } & = \abs[\Big]{ \int_\R  e^{itx - y x} (\mu - \mu_\nu) (dx) } = \abs[\Big]{ \sum_{ k \ge 0} \frac{(-y)^k}{k!} \int_\R x^k e^{itx} (\mu - \mu_\nu)(dx) } \\
& \le \sum_{k\ge 0}\frac{e^{-k}}{k!} \abs[\Big]{ \phi^{(k)}_{F}(t)- \phi^{(k)}_{G(\nu)} (t) }  \le  \sum_{k\ge 0}\frac{e^{-k}}{k!} \rho^{k+1} d_2 (F, G(\nu)) \\
& = \rho \, e^{\rho e^{-1}} d_2 (F, G(\nu)).
\end{align*}
Hence $ \abs{ \phi_{F}(z)  - \phi_{G(\nu)}(z) } \le_{C_{\rho}} d_2 (F, G(\nu))$ for every $z \in \Omega_{\rho,\nu}$. Let $R>0$ such that the disk $D_R \subset \mathbb{C}$ with the origin as center and radius $R$ is contained in the domain $\Omega_{\rho,\nu}$ (note that $R$ depends only on $\nu$, since $\rho$ is a free parameter. For example, one can choose $ \min \{ (2\sqrt{\nu})^{-1}, e^{-1}\} < \rho <2  \min \{ (2\sqrt{\nu})^{-1}, e^{-1}\}$). Now for any $z \in D_R$, and using the fact that
\[
\frac{1}{\phi^2_{G(\nu)}(z)}= \left(e^{2iz} (1-2iz) \right)^\nu,
\]
one can readily conclude that the function $\phi_{G(\nu)}(z)$ is bounded away from $0$ on the disk $D_R$. Also, for any $r \ge 2$,
\begin{equation}\label{eq:lower-2}
\begin{split}
\abs[\big]{ \kappa_r (F) } & \le 2^{r-1}(r-1)! \sum_{i\ge 1} \abs{ c_{i} }^{r}   \le 2^{r-1}(r-1)! \max_{i} \abs{ c_{i} }^{r-2} \sum_{i\ge 1} \abs{ c_{i} }^{2}\\
& \le  2^{r-2}(r-1)! \sqrt{\nu}^{\,r-2} \, \E(F^2) = 2^{r-2}(r-1)! \sqrt{\nu}^{\,r}.
\end{split}
\end{equation}
Therefore, for any $z \in D_R$,
\begin{align*}
\abs[\Big]{ \frac{1}{\phi_{F}(z)} } \le \exp \Big\{ \sum_{r \ge 2} \frac{ \abs{ \kappa_r(F) } }{r!} \abs{z}^r \Big\} & \le  \exp \Big\{ \sum_{r \ge 2} \frac{2^{r-2}(r-1)! \sqrt{\nu}^{\,r}}{r!} \abs{z}^r \Big\}\\
& \le  \exp \Big\{ \sum_{r \ge 2} \frac{2^{r-2}(r-1)! \sqrt{\nu}^{\,r}}{r!} R^r \Big\}:= C_{R,\nu}< \infty.
\end{align*} 
Hence the function $ \phi_{F}(z)$ is also bounded away from $0$ on the disk $D_R$. Also, relation \eqref{eq:lower-2} implies that the following power series (complex variable) converge to some analytic function as soon as $\abs{z} < R$;
\begin{equation}\label{eq:lower-4}
\sum_{r\ge1}\frac{\kappa_r(F)}{r!}(iz)^r, \quad \sum_{r\ge1}\frac{\kappa_r(G(\nu))}{r!}(iz)^r.
\end{equation}
Thus we come to the conclusion that the functions $\phi_{G(\nu)}(z)$ and $\phi_{F}(z)$ are analytic on the disk $D_R$. Moreover, there exists a constant $c >0$ such that $\abs{ \phi_{G(\nu)}(z) },  \abs{ \phi_{F}(z) } \ge c >0$ for every $z \in D_R$. This implies that on the disk $D_R$ there exist two analytic functions $g$ and $g_\nu$ such that
\[
\phi_{F}(z)=e^{g (z)}, \quad \phi_{G(\nu)}(z)=e^{g_\nu (z)},
\]
i.e. $g (z)= \log (\phi_{F}(z))$ and $g_\nu(z)=\log(\phi_{G(\nu)}(z))$, for $z \in D_R$. In fact, the functions $g$ and $g_\nu$ are given by the power series \eqref{eq:lower-4}. Since the derivative of the analytic branch of the complex logarithm is  $(\log z)' = \frac{1}{z}$ (see \cite[Corollary $2.21$]{conway}), one can infer that for some constant $C$ whose value may differ from line to line and for every $z \in D_R$, we have 
\begin{align*}
\abs[\Big]{ \sum_{r \ge 2} \frac{\kappa_r(F) - \kappa_r (G(\nu))}{r!}(iz)^r } & = \abs[\Big]{  \log (\phi_{F}(z)) - \log(\phi_{G(\nu)}(z)) } \\
& \le_C \abs[\Big]{ \phi_{F}(z) - \phi_{G(\nu)}(z) } \le_C d_{2} (F, G(\nu)).
\end{align*}
Now, using Cauchy's estimate for the coefficients of analytic functions, for any $r \ge 3$, we obtain that
\[
\abs[\Big]{ \kappa_r (F) - \kappa_r (G(\nu)) } \le r! R^r \sup_{\abs{z} \le R} \abs[\Big]{ \log \phi_{F} (z) - \log \phi_{G(\nu)}(z) }.
\]
Therefore,
$
\max \Big\{  \abs[\Big]{ \kappa_3 (F) - \kappa_3 (G(\nu)) }, \abs[\Big]{ \kappa_4 (F) - \kappa_4 (G(\nu)) } \Big\} \le_{C} d_{2} (F, G(\nu))$.

\end{proof}

\subsection{Main Result: Non Asymptotic Optimal Gamma Approximation}\label{subsec:optima-gamma}
Now we are ready to present a non asymptotic optimal Gamma approximation in full generality on the second Wiener chaos in terms of the maximum of the third and fourth cumulants. The following result provides an analogous counterpart to the same phenomenon in the case of normal approximation, see \cite[Theorem 1.2]{n-p-optimal} or Theorem \ref{thm:optimal-normal} item (b). 
\begin{thm}\label{thm:optimal-Gamma-2chaos}
	Let $\nu >0$, and $G(\nu) \sim CenteredGamma(\nu)$. Assume that $F= I_2 (f)$ belongs to the second Wiener chaos such that $\E[F^2]=2\nu$. Then there exist two general constants $0<C_1<C_2$ (possibly depending on the parameter $\nu$) such that 
	\begin{equation}\label{eq:optimal-finite-eigenvalue}
	C_1 \, \mathbf{M} (F) \le d_{2} (F, G(\nu)) \le C_2 \,  \mathbf{M} (F).
	\end{equation}
Recall that 
	\[
	\mathbf{M}(F) :=  \max \Big\{ \abs[\Big]{ \kappa_3 (F) - \kappa_3 (G(\nu)) }, \abs[\Big]{ \kappa_4 (F) - \kappa_4 (G(\nu)) }  \Big\}.
	\]
	\end{thm}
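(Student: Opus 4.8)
The plan is to assemble Theorem~\ref{thm:optimal-Gamma-2chaos} from the pieces already developed. The lower bound $C_1\,\mathbf{M}(F) \le d_2(F,G(\nu))$ is exactly Proposition~\ref{prop:lower-bound}, so nothing further is needed there. For the upper bound, the starting point is the general Stein--Malliavin estimate of Theorem~\ref{thm:MainMalliavinSteinBound}, which applies since any $F=I_2(f)$ in the second Wiener chaos admits a finite chaos expansion with $\Var(F)=2\nu$. That estimate bounds $d_2(F,G(\nu))$ by a constant times
\[
\Var\!\big(\Gamma_1(F)-2F\big) + \sqrt{\Var\!\big(\Gamma_2(F)-2\Gamma_1(F)\big)}\,\sqrt{\Var\!\big(\Gamma_1(F)-2F\big)} + \sqrt{\Var\!\big((\Gamma_3(F)-2\Gamma_2(F))-2(\Gamma_2(F)-2\Gamma_1(F))\big)} + \big|\kappa_3(F)-\kappa_3(G(\nu))\big| + \big|\kappa_4(F)-\kappa_4(G(\nu))\big|,
\]
so it suffices to control each variance term by $\mathbf{M}(F)$.

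First I would dispatch the last two summands: they are trivially $\le \mathbf{M}(F)$ by definition of $\mathbf{M}(F)$. Next, the key observation is Lemma~\ref{lem:Var(Gamma_r-2Gamma_r-1)} with $r=1$, which gives
\[
\Var\!\big(\Gamma_1(F)-2F\big) = \tfrac{1}{6}\kappa_4(F) - 4\kappa_3(F) + 4\kappa_2(F),
\]
and since $\kappa_2(F)=2\nu=\kappa_2(G(\nu))$, $\kappa_3(G(\nu))=8\nu$, $\kappa_4(G(\nu))=48\nu$, the combination $\tfrac16\kappa_4(G(\nu)) - 4\kappa_3(G(\nu)) + 4\kappa_2(G(\nu)) = 0$, so this rewrites as $\tfrac16(\kappa_4(F)-\kappa_4(G(\nu))) - 4(\kappa_3(F)-\kappa_3(G(\nu)))$, which is bounded by a constant times $\mathbf{M}(F)$. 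This handles the first summand. Combined with Proposition~\ref{prop:Var-estimate-1} (estimate~\eqref{eq:optimal-Var1}), which gives $\Var(\Gamma_2(F)-2\Gamma_1(F)) \le 4\nu\,\Var(\Gamma_1(F)-2F)$, the second summand is $\le C\,\Var(\Gamma_1(F)-2F) \le C'\,\mathbf{M}(F)$. Finally, Proposition~\ref{prop:Var-estimate-2} (estimate~\eqref{eq:optimal-Var2}) gives
\[
\Var\!\Big((\Gamma_3(F)-2\Gamma_2(F))-2(\Gamma_2(F)-2F)\Big) \le 2\,\Var^2\!\big(\Gamma_1(F)-2F\big),
\]
so its square root is $\le \sqrt{2}\,\Var(\Gamma_1(F)-2F) \le C\,\mathbf{M}(F)$; one small bookkeeping point is that the bracket appearing in Theorem~\ref{thm:MainMalliavinSteinBound} is $(\Gamma_3-2\Gamma_2)-2(\Gamma_2-2\Gamma_1)$ whereas \eqref{eq:optimal-Var2} is phrased with $\Gamma_2-2F$ in place of $\Gamma_2-2\Gamma_1$, but for $F=I_2(f)$ one has $\Gamma_0(F)=F$ and the recursion makes $\Gamma_1(F)-2F$ and the relevant contraction differences coincide in the way used in the proof of Proposition~\ref{prop:Var-estimate-2}, so the two expressions agree. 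Summing the five bounds yields $d_2(F,G(\nu)) \le C_2\,\mathbf{M}(F)$, and together with Proposition~\ref{prop:lower-bound} this completes the proof.

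The main obstacle, and the only genuinely nontrivial input, is already encapsulated in the earlier results: it is the passage in Theorem~\ref{thm:MainMalliavinSteinBound} that uses the operator-theoretic splitting $h \mapsto h + 2S(h)$ (via Theorem~\ref{thm:fredholm-alternative} and Proposition~\ref{prop:universal-bound-solution}) to replace the naive term $\sqrt{\Var(\Gamma_3(F)-2\Gamma_2(F))}$ — which by Proposition~\ref{prop:Var-estimate-1} is only of order $\Var(\Gamma_1(F)-2F)^{1/2} \approx \mathbf{M}(F)^{1/2}$ and hence suboptimal — by the second-difference quantity that Proposition~\ref{prop:Var-estimate-2} shows is of order $\Var(\Gamma_1(F)-2F) \approx \mathbf{M}(F)$. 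Once that machinery is in place, as it is by this point in the paper, the present theorem is a matter of collecting estimates and checking that the relevant cumulant combinations vanish at $G(\nu)$, which is recorded in Remark~\ref{rem:cumulants-vanish} and the cumulant formula~\eqref{eq:CenteredGammaCumulants}.
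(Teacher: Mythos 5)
Your proposal is correct and follows essentially the same route as the paper, which likewise assembles the theorem from Proposition~\ref{prop:lower-bound} (lower bound) and Theorem~\ref{thm:MainMalliavinSteinBound} combined with Lemma~\ref{lem:Var(Gamma_r-2Gamma_r-1)} at $r=1$, estimate \eqref{eq:optimal-Var1} and estimate \eqref{eq:optimal-Var2} (upper bound). One transcription slip to fix: Lemma~\ref{lem:Var(Gamma_r-2Gamma_r-1)} with $r=1$ gives $\Var(\Gamma_1(F)-2F)=\tfrac16\kappa_4(F)-2\kappa_3(F)+4\kappa_2(F)$ (coefficient $-2$, not $-4$, on $\kappa_3$); with your coefficient the combination does not vanish at $G(\nu)$ (it equals $8\nu-32\nu+8\nu=-16\nu$), whereas with the correct one it does, so the step works exactly as you intend once the coefficient is repaired.
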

	\begin{proof}
	For the upper bound combine Theorem \ref{thm:MainMalliavinSteinBound} with Proposition \ref{prop:Var-estimate-1} estimate \eqref{eq:optimal-Var1}, Proposition \ref{prop:Var-estimate-2} estimate \eqref{eq:optimal-Var2} as well as Lemma \ref{lem:Var(Gamma_r-2Gamma_r-1)} with $r=1$. The lower bound directly follows from Proposition \ref{prop:lower-bound}. 
		\end{proof}
	
	\begin{rem}\label{rem:generalization-higher-chaos}
	In this remark we shortly comment on a natural thought relating to the generalization of the optimal rate \eqref{eq:optimal-finite-eigenvalue} to higher order Wiener chaoses. In addition a complete lack of any non-artificial example of a sequence of random variables in a fixed Wiener chaos of order $q \ge 3$ converging towards the $G(\nu)$ distribution, our investigations imply that such an extension would come at the cost of very complicated computations involving norms of \textit{contraction operators} to verify estimate \eqref{eq:optimal-Var2} (possibly with a different constant). Furthermore, our method to achieve the optimal lower bound, relying on complex analysis, cannot be used anymore in higher order chaoses, and hence one requires the introduction of new ideas.   
	\end{rem}
	
\subsection{Examples}
We start with the following naive example that illustrates the essential role of our operator theory technique to achieve the optimal rate. It is worth mentioning that all the rates achieved in the forthcoming examples are better (by a square power) than those that can be obtained by the Malliavin-Stein bound \cite[Theorem 1.5]{StMethOnWienChaos}. In the following, when $(a_n)_{n\geq 1}$ and $(b_n)_{n\geq1}$ are two non-negative real number sequences, we write $a_n \approx_C b_n$ if $\lim_{n \to \infty} \frac{a_n}{b_n} = C$, for some constant $C>0$.

\begin{ex}\label{ex:naive-example}
	Let $N_1, N_2 \sim \mathscr{N}(0,1)$ be independent. Consider the sequence 
	\begin{equation*}
	\begin{split}
	 F_n &= c_{1,n} \, (N_1^2 -1) + c_{2,n} \, (N_2^2 -1):=   \sqrt{1+ \frac{1}{n}}  (N_1^2 -1) + \sqrt{1 - \frac{1}{n}}  (N_2^2 -1) \\
	& \stackrel{\mathcal{D}}{\longrightarrow} G(2),  \text{ as } n \to \infty.
	\end{split}
	\end{equation*}
First note that $\E[F^2_n]=4$ for every $n \in \N$. Also, using Proposition \ref{Prop:2choas-properties} item $2$, and relation \eqref{eq:CenteredGammaCumulants}, simple computations yield that $ \kappa_4(F_n) - \kappa_4(G(2))= 48\,  \frac{2}{n^2} \approx_C \frac{1}{n^2}$. Similarly $
	 \kappa_3(F_n) - \kappa_3(G(2)) = 8 \sum_{j=1}^{2} \left( c_{j,n}^3 - 1 \right) \approx_C \frac{1}{n^2}$.  Therefore, our main Theorem \ref{thm:optimal-Gamma-2chaos} implies 
	 \begin{equation}\label{eq:Ex1-optimal-rate}
 d_{2} \big(F_n, G(2) \big)  \approx_C \max \Big\{ \abs[\big]{ \kappa_3(F_n) - \kappa_3(G(2)) } , \abs[\big]{ \kappa_4(F_n) - \kappa_4(G(2)) } \Big\} \approx_C \frac{1}{n^2}.
 \end{equation}
The following important remarks are in order. (a) This example represents a typical scenario, in which, in order to obtain the optimal upper bound, one needs to join together two Gamma quantities $\Gamma_3 (F_n) - 2 \Gamma_{2}(F_n)$  and $\Gamma_2 (F_n) - 2 \Gamma_{1}(F_n)$. In fact, it is not difficult, using Lemma \ref{lem:Var(Gamma_r-2Gamma_r-1)}, to see that 
\begin{equation*}
 \Var \left( \Gamma_r (F_n) - 2 \Gamma_{r-1}(F_n) \right) \approx_C  \Var \left( \Gamma_1 (F_n) - 2 F_n \right) \approx_C \frac{1}{n^2},\, \forall \, r\ge 1.
\end{equation*} 
And now consider Remark \ref{rem:Operator-Theory-Essential}. (b) It is classical that the density function $f_n$ of the random variable $F_n$ admits the following explicit representation in terms of \textit{confluent hypergeometric functions}, 
	\[
f_n(x) = \frac{1}{2 \sqrt{c_{1,n} c_{2,n}}} \, e^{ - \frac{x + c_{1,n} + c_{2,n}}{2 c_{1,n}} } \times \confhyper\Big(\frac{1}{2},1, - \frac{c_{1,n} - c_{2,n}}{2 c_{1,n} c_{2,n}} (c_{1,n}+c_{2,n}+x) \Big) \times \ind{x > - c_{1,n} - c_{2,n}}(x).
\]
Also recall that the density of the target $G(2)$ is given by $f_\nu(x) = \frac{1}{2} e^{-\frac{x}{2} -1 } \ind{x > -2}(x)$. Using rather long and tedious computations, one can show that the optimal estimate \eqref{eq:Ex1-optimal-rate} continues to hold in the stronger distance of \textit{total variation}, namely that
\begin{multline*}
	d_{TV}(F_n, G(2))  = \frac{1}{2} \int_{-\infty}^{\infty} \abs{ f_n(x) - f_\nu(x)  } \, dx \\
	\approx_C \max \Big\{ \abs[\big]{ \kappa_3(F_n) - \kappa_3(G(2)) } , \abs[\big]{ \kappa_4(F_n) - \kappa_4(G(2)) } \Big\} \approx_C \frac{1}{n^2}.
\end{multline*}

\end{ex} 
\begin{ex}(U-statistics)\label{ex:1}
	In this example, we consider a second order U-statistic with degeneracy order $1$ inspired by \cite[section 3.1]{a-a-p-s}. The reader may consult the excellent textbook \cite{Serfing} for a general asymptotic theory of $U$-statistics. Let $\{h_i \}_{i \geq 1}$ be an orthonormal basis of $\HH$ and for $i \geq 1$ set $Z_i := I_1(h_i)$. Consider
	\[ U_n = \frac{2}{n (n-1)} \sum_{1 \leq i < j \leq n} Z_i Z_j = I_2 \bigg( \frac{2}{n (n-1)} \sum_{1 \leq i < j \leq n} h_i \widetilde{\otimes} h_j \bigg). \]
	Then $nU_n \stackrel{\mathcal{D}}{\to} G(1)$ as $n \to \infty$ with parameter $\nu=1$. Furthermore to fix the variance to $2\nu=2$, define
	\begin{align*}
	W_n := \sqrt{\frac{n-1}{n}} n U_n & = I_2 \bigg(  \frac{2}{\sqrt{n (n-1)}} \sum_{1 \leq i < j \leq n} h_i \symtensor h_j \bigg) =: I_2(f_n).
	\end{align*}
	We consider the associated Hilbert-Schmidt operator $A_{f_n} g = f_n \cont{1} g$. Using the fact that $(h_i \tensor h_j) \cont{1} h_k = \sprod{h_i, h_k}_{\HH} \, h_j$ we can explicitly compute the non-zero eigenvalues $c_{1,n}, \ldots, c_{n,n}$ of $A_{f_n}$. They are
	\begin{equation}\label{eq:eigenvalues}
	 c_{1,n} = \sqrt{\frac{n-1}{n}}, \text{ and } c_{2,n} = \ldots = c_{n,n} = \frac{-1}{\sqrt{n (n-1)}}. 
	 \end{equation}
	Therefore, as $n \to \infty$, gathering Proposition \ref{Prop:2choas-properties} item $2$, relation \eqref{eq:eigenvalues} and Theorem \ref{thm:optimal-Gamma-2chaos} we get that
		\[ 
	d_2\big(W_n, G(1) \big) \approx_C \abs[\big]{\kappa_3(W_n) - \kappa_3(G(1))} \approx_C \abs[\big]{\kappa_4(W_n) - \kappa_4(G(1))} \approx_C \frac{1}{n}. \]
\end{ex}

In the next example we consider the important problem of the asymptotic behavior of the \textit{least squares} estimators in the \textit{autoregressive} models in the \textit{nearly non-stationary} regime, where the target distribution $G(\nu)$ shows up. For more  details on this fascinating subject, we refer the reader to \cite{C-W,C-W-general,White,Rao,B-C,L-L-S} and references therein when the noise is a martingale difference, and \cite{B-C-fractional} when the innovation process exhibits \textit{long-range} dependence. We also refer to \cite[Proposition 2]{Gotze-Tikhomirov-1} for a study of optimal rates in a general context of quadratic forms. 
\begin{ex}(Least square estimator in nearly non stationary $AR(1)$ model)	
Let $n \in \N$. Let $\beta_n := 1 - \frac{\beta}{n}$. We consider the first order autoregressive process $X_t (n) = \beta_n X_{t-1}(n) + Z_t$, where $t=1,\ldots,n$, $X_0(n)=0$ for all $n$ and $(Z_i)$ is a white noise, i.e. a sequence of i.i.d. $\mathscr{N}(0,1)$ random variables.  It is classical that the least squares estimator of the unknown parameter $\beta_n$, based on discrete observations $X_1(n),\ldots,X_n(n)$, is given by
\[ \widehat{\beta}_n = \frac{\sum_{t=1}^{n} X_{t-1}(n)  X_t (n)}{ \sum_{t=1}^{n} X^2_{t-1}(n)}. \] 
Define $$W^\beta_n:= \frac{2}{\sqrt{n(n-1)}}\left(   \sum_{t=1}^{n} X^2_{t-1}(n) \right) ( \widehat{\beta}_n - \beta_n)= \frac{2}{\sqrt{n(n-1)}} \sum_{i=1}^{n}\sum_{j=1}^{i-1} \beta^{i-j}_n Z_{i} Z_j.$$
Then \cite[Theorem 1]{C-W} implies that as $n \to \infty$:
\begin{equation*}
W^\beta_n \stackrel{\mathcal{D}}{\longrightarrow} 
 W^{\beta}_\infty:= 2 \int_{0}^{1} \left( 1 + t (e^{2\beta} -1) \right)^{-1} B_t dB_t,
\end{equation*} 
where $B=(B_t)_{t \in [0,1]}$ is a standard Brownian motion. 
In particular when $\beta=0$, we observe that $W_\infty:= W^{\beta=0}_\infty = G(1)$ (equality in law), and hence we obtain that $W_n := W^{\beta=0}_n \stackrel{\mathcal{D}}{\longrightarrow} G(1)$. Now, apply Example \ref{ex:1} to deduce that $d_{2} \big(W_n, G(1) \big) \approx_C \frac{1}{n}$.
	\end{ex}

\begin{ex}(Least square estimator in $AR(2)$ model)	
	In this example, we consider the second order autoregressive $AR(2)$ model: 
	\begin{equation}\label{eq:AR2}
	X_n = \beta_{1} X_{n-1} + \beta_{2} X_{n-2} + Z_n,
	\end{equation}
	where $(Z_k)$ is a white noise, and $X_{0}=X_{-1}=0$. Further, assume that the roots of the associated characteristic polynomial $ 1- \beta_1 z - \beta_2 z^2$ are  $e^{i\theta}$ and $e^{-i \theta}$, and lie on the unit disk. Under this condition it is easy to see that $\beta_1 = 2 \cos \theta$ and $\beta_2 =-1$. The least square estimator $\widehat{\boldsymbol{\beta}}_{n} = (\widehat{\beta}_{1,n}, \widehat{\beta}_{2,n})'$ of the parameter $\boldsymbol{\beta}=(\beta_1,\beta_2)' = (2\cos \theta ,-1)'$ for  $n \ge 2$ is given by
	\[ \widehat{\boldsymbol{\beta}}_{n}= \left(  \sum_{k=0}^{n-1} \mathbf{X_k} \mathbf{X_k}' \right)^{-1} \sum_{k=1}^{n}  \mathbf{X_{k-1}} \mathbf{X_k}, \, \text{ where  } \, \mathbf{X_k}=(X_k,X_{k-1})'.  \]
In	\cite{C-W-general}, the asymptotic behavior of  $n (\widehat{\boldsymbol{\beta}}_{n} - \boldsymbol{\beta})= \mathbf{A}^{-1}_n \boldsymbol{b}_n$ has been derived where 

\[  \mathbf{A}_n = \frac{1}{n^2} \sum_{k=2}^{n} 
\begin{bmatrix} 
X^{2}_{k-1} & X_{k-1}X_{k-2}\\ 
X_{k-1}X_{k-2}  & X^{2}_{k-2}
\end{bmatrix}, \text{ and } 
   \boldsymbol{b}_n =   \begin{bmatrix}   b_{1,n} \\ b_{2,n} \end{bmatrix} := \frac{1}{n} \sum_{k=2}^{n} \begin{bmatrix}   X_{k-1}Z_k \\  X_{k-2}Z_k \end{bmatrix} .
\]

	Following \cite[Corollary 3.3.8]{C-W-general}, as $n \to \infty$, one can deduce that
	\[  W^{\theta}_n := 4 ( \cos \theta \, b_{1,n}- b_{2,n}) = 4 \left(  (\cos \theta )	\frac{1}{n} \sum_{k=1}^{n} X_{k-1} Z_k -  	\frac{1}{n} \sum_{k=1}^{n} X_{k-2} Z_k\right)  \stackrel{\mathcal{D}}{\to} G(2).  \]
	
 Note that the sequence $( W^{\theta}_n : n \ge 1)$ belongs to the second Wiener chaos.  An interesting feature of the previous limit theorem is that although the sequence does depend on the parameter $\theta$ in the model, the target distribution is independent of $\theta$. On the other hand, relation \eqref{eq:AR2} together with the assumption $(\beta_1,\beta_2) = (2\cos \theta ,-1)$ yields that
 \[  X_k= \sum_{j=1}^{k} \frac{\sin (k-j+1)\theta}{\sin \theta} Z_j. \]
 Therefore,
\begin{align*}
W^{\theta}_{n} 
& =  \frac{4}{n} \, \sum_{i=2}^{n} \sum_{j=1}^{i-1}  \frac{ \cos \theta \sin (i-j)\theta - \sin (i-j-1)\theta }{\sin \theta}  Z_i Z_j . 
\end{align*} 
By elementary combinatorics, we have for any function $f: \N \to \R$ that 
$ \sum_{i=2}^{n} \sum_{j=1}^{i-1} f(i-j) = \sum_{k=1}^{n-1} (n-k) f(k)$. Using this, and evaluating the sums of sine functions (which are just geometric sums after writing them in terms of complex exponentials), we get
\begin{align}
\E[(W^{\theta}_{n})^2] 
& = \frac{16}{n^2} \bigg\{ \frac{1}{8 \sin^4 \theta} \Big( \cos \big( 2\theta (n-1) \big) - 2 \cos(\theta) \cos\big( \theta (2n-1) \big) + \cos^2(\theta) \cos(2n \theta) \Big) \notag \\
& \hspace{3em} + \frac{1}{8 \sin^2(\theta)} \Big( n \cos(2 \theta) + 1 - n \Big) + \frac{ n(n - 1) }{4} \bigg \}. \label{eq:VarianceOfW_n^theta}
\end{align}
Note that $\big \vert  \kappa_2(W^{\theta}_{n}) - 4 \big \vert \approx_C 1/n$ as $n \to \infty$. Now we scale $W_n^{\theta}$ so that it has variance equal to $4$ for every $n \in \N$. Set $\sigma_n := \sqrt{\Var( W_n^{\theta})}$, and let $\widetilde{W}_n^{\theta} := \frac{2}{\sigma_n} W_n^{\theta} $. 
Using \eqref{eq:SecondWienerChaosCumulantFormulaEigenvalues}, and after some tedious computations, we get that
	\begin{align*}
	\kappa_3( \widetilde{W}_n^{\theta} ) & = \frac{512}{\sigma_n^3 n^3} \bigg( - \frac{3 n \cos^2(n \theta)}{4 \sin^2(\theta)} + \frac{3 \cos^2(n \theta)}{2 \sin^2(\theta)} + \frac{5 n \cos^4(\theta)}{4 \sin^4(\theta)} + \frac{13 n}{4 \sin^2(\theta)} - \frac{3}{2 \sin^2(\theta)} \\
	& \hspace{4.5em} - \frac{5 n}{4 \sin^4(\theta)} + \frac{n^3}{4} - \frac{3 n^2}{2} + \frac{3 n}{4} \bigg).
	\end{align*}
	Using that $\sigma_n^3 \to 8$ as $n \to \infty$, we see that $\lim_{n \to \infty} \kappa_3( \widetilde{W}_n^{\theta} ) = 16 = 8\nu$ (note that $\nu=2$), and furthermore, 
	\[ \abs{  \kappa_3( \widetilde{W}_n^{\theta} ) - \kappa_3(G(2))  } \approx_C \frac{1}{n}. \]
Similar computations yield that  $\abs{  \kappa_4( \widetilde{W}_n^{\theta} ) - \kappa_4(G(2))  } \approx_C 1/n$. Therefore, Theorem \ref{thm:optimal-Gamma-2chaos} can be applied to deduce that $d_2 ( \widetilde{W}_n^{\theta}, G(2)) \approx_C 1/n$. 

\end{ex}

	\begin{ex}(Quadratic forms \cite{w-v} and \cite[section 3.2]{a-a-p-s}) In this example, we consider a general quadratic form in independent standard normal random variables 
		\[
		F_n:= \sum_{1 \le i,j \le n} c_n (i,j) Z_i Z_j, \quad n \in \N,
		\]
		where $C_n=(c_n(i,j))_{1 \le i,j \le n}$ is an $n\times n$ symmetric matrix, and $(Z_i)$ is a sequence of i.i.d standard normal random variables. Let $\nu > 0$ be an integer number. Now, we make the following assumptions: 
		\begin{itemize}
			\item[(a)] The second moment assumption:
			$\sum_{1 \le i,j \le n} c_n(i,j)^2 =\nu, \quad \forall n \in \N$.
			\item[(b)] There exists a sequence $\{b_n^m(i): n,i =1,2,\ldots, m=1,2,\ldots,\nu\}$ of real numbers such that as $n \to \infty$:
			\[  
			\sum_{1 \le i \le n} b_n^m(i)b_n^k(i) \rightarrow \delta_{km}, \text{ and }
			\exists b>0,\  \forall i,m,n,\ \sqrt{n}\mid b_n^m(i)\mid \leq b<+\infty.\]
			\item[(c)] For every $1 \le m\le \nu$, as $n \to \infty$ it holds that:
			$		\sum_{1 \le i,j \le n} c_{n}(i,j) b_n^m(i)b_n^m(j) \rightarrow 1$.
		\end{itemize} 
		Now a direct application of \cite[Theorem 2]{w-v} implies that 	$W_n := F_n - \E[F_n] \stackrel{\mathcal{D}}{\rightarrow} G(\nu)$. Note that $\E[W^2_n]= 2\nu$ for every $n \in \N$ relying on condition (a). Moreover, one can write $W_n = I_2 (\sum_{1 \le i,j \le n} c_n (i,j) h_i \symtensor h_j)$, where $\{h_i \}_{i \geq 1}$ stands for an orthonormal basis of $\HH$, and for $i \geq 1$,as before, we set $Z_i := I_1(h_i)$. Therefore our main Theorem \ref{thm:optimal-Gamma-2chaos} entails that 
		\begin{equation}\label{eq:general-rate-quadratic-forms}
		d_2 (W_n, G(\nu)) \approx_C \max \Big\{ \abs[\big]{ \kappa_3(W_n) - \kappa_3(G(\nu)) } , \abs[\big]{ \kappa_4(W_n) - \kappa_4(G(\nu)) } \Big\}.
		\end{equation} 
		Depending on the particular choice of the matrix $C_n$ in the original quadratic form $F_n$, we can provide explicit rates (in terms of suitable powers of $n$) in the asymptotic relation \eqref{eq:general-rate-quadratic-forms}. For example, following \cite[remark after Theorem 2]{w-v} and \cite[Corollary 3.2]{a-a-p-s}, assume that $\{ e_m : m =1,\ldots,\nu \}$ is a sequence of distinct orthonormal functions in $L^2[0,1]$ such that $e_m \in C^\alpha([0,1])$ for some $\alpha \in (0,1]$. Here $C^\alpha([0,1])$ denotes the space of all H\"older continuous functions with H\"older exponent $\alpha$. Consider the square integrable kernel $K_\nu$ defined as
		\[ K_\nu (x,y)= \sum_{1 \le m \le \nu} e_m(x) e_m (y), \, \forall \, (x,y) \in (0,1)^2.\]
		Finally, for $n \in \N$ and $1 \le i,j \le n$ we set 
		\[
		d_n (i,j):= \frac{1}{n} K_\nu (\frac{i}{n},\frac{j}{n}), \quad \text{and} \quad c_n (i,j):= \sqrt{ \frac{\nu}{\sum_{1 \le i,j \le n} d^2_n (i,j)} } \times d_n(i,j).
		\]
		Now consider the sequence $W_n= F_n - \E[F_n]$ associated to the symmetric matrix $C_n = (c_n (i,j))$ belonging to the second Wiener chaos. Then, it is straightforward to check that the conditions (a)-(b)-(c) are in order with $b^m_n (i)= \frac{e_m(i/n)}{\sqrt{n}}$. On the other hand, it has been shown \cite[Corollary 3.2]{a-a-p-s} that:
		\begin{equation}\label{eq:cumulants-rate}
		\big \vert \kappa_r (W_n) - \kappa_r (G(\nu)) \big \vert \approx_C n^{- \alpha}, \, \forall \, r \ge 2. 
		\end{equation}
		Putting together the asymptotic estimates \eqref{eq:general-rate-quadratic-forms} and \eqref{eq:cumulants-rate}, we obtain the optimal rate $d_2 (W_n,G(\nu)) \approx_C n^{- \alpha}$. Also, the example presented on page $107$ in \cite{StMethOnWienChaos} can be treated in this framework, and resulting in an improved optimal rate of $1/n$.  
	\end{ex}	
	\section{Appendix}
	
	The following lemma provides an explicit representation of the new Gamma operators used in this paper in terms of contractions. Recall that these are not the same as e.g. in \cite{CumOnTheWienerSpace}, but rather the new ones introduced in \eqref{eq:GammOperatorDefinition}.
	
	\begin{lem} \label{lem:RepresentationOfGamma_j}
		For $q \geq 1$, lets $F=I_q(f)$, for some $f \in \HH^{\odot q}$ be an element of the $q$-th Wiener chaos. Then
		\begin{align}
		\Gamma_{s}(F)  = & \sum_{r_1=1}^{q} \cdots \sum_{r_{s}=1}^{[sq - 2 r_1 - \cdots - 2 r_{s-1}] \wedge q} c_q (r_1, \ldots, r_{s}) \ind{r_1<q} \ldots \ind{r_1 + \cdots + r_{s-1} < \frac{sq}{2}} \notag \\
		& \times I_{(s+1)q - 2 r_1 - \cdots - 2 r_{s} } \left( \left( \left( \ldots (f \scont{r_1} f ) \scont{r_2} f  \right) \ldots f  \right) \scont{r_{s}} f \right), \label{eq:NewGammaOpProof1}
		\end{align}
		where the constants $c_q(r_1, \cdots, r_{s})$ are recursively defined via
		$ c_q(r) = q \, (r-1)! \, \binom{q-1}{r-1}^2$, and for $s\ge 2$, 
		\begin{align} 
		& c_q(r_1, \cdots, r_{s}) =  \notag \\
		& (sq- 2 r_1 - \cdots - 2 r_{s-1}) \, (r_s-1)!  \, \binom{sq-2r_1- \cdots -  2r_{s-1} -1 }{r_s - 1} \binom{q-1}{r_s - 1} \,  c_q(r_1,  \cdots, r_{s-1}). \label{eq:RecursiveFormulaForc_q}
		\end{align}
	\end{lem}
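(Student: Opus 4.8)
The plan is to prove \eqref{eq:NewGammaOpProof1} by induction on $s\geq 1$, repeatedly applying the multiplication formula for multiple Wiener--It\^o integrals. Two facts carry most of the weight. First, since $L^{-1}I_q(f)=-\tfrac1q I_q(f)$, one has $-DL^{-1}F=\tfrac1q DI_q(f)=I_{q-1}(f)$, where, as usual, $I_{q-1}(f)$ denotes the $\HH$-valued random variable $x\mapsto I_{q-1}\big(f(\cdot,x)\big)$. Second, the standard bracket identity (see e.g. \cite{n-pe-1}): for $g\in\HH^{\odot p}$ and $h\in\HH^{\odot q}$,
\begin{equation*}
\big\langle DI_p(g),DI_q(h)\big\rangle_{\HH}=pq\sum_{r=1}^{p\wedge q}(r-1)!\binom{p-1}{r-1}\binom{q-1}{r-1}I_{p+q-2r}\big(g\scont{r}h\big).
\end{equation*}
Combining the two, for every $g\in\HH^{\odot p}$,
\begin{equation*}
\big\langle DI_p(g),-DL^{-1}F\big\rangle_{\HH}=\tfrac1q\big\langle DI_p(g),DI_q(f)\big\rangle_{\HH}=p\sum_{r=1}^{p\wedge q}(r-1)!\binom{p-1}{r-1}\binom{q-1}{r-1}I_{p+q-2r}\big(g\scont{r}f\big).\tag{$\ast$}
\end{equation*}
I shall call the last display identity $(\ast)$; it is the only computational input needed.

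For the base case $s=1$, applying $(\ast)$ with $p=q$ and $g=f$ gives $\Gamma_1(F)=\langle DF,-DL^{-1}F\rangle_{\HH}=q\sum_{r_1=1}^{q}(r_1-1)!\binom{q-1}{r_1-1}^2I_{2q-2r_1}(f\scont{r_1}f)$, which is exactly \eqref{eq:NewGammaOpProof1} with $s=1$, $c_q(r_1)=q(r_1-1)!\binom{q-1}{r_1-1}^2$, and an empty product of indicator functions. For the inductive step, assume \eqref{eq:NewGammaOpProof1} for $\Gamma_{s-1}(F)$, so that $\Gamma_{s-1}(F)$ is a \emph{finite} linear combination of terms $c_q(r_1,\dots,r_{s-1})\,I_{m'}\big(g_{r_1,\dots,r_{s-1}}\big)$ with $m':=sq-2r_1-\cdots-2r_{s-1}$ and $g_{r_1,\dots,r_{s-1}}:=(\cdots(f\scont{r_1}f)\cdots)\scont{r_{s-1}}f$. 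Since $\Gamma_s(F)=\langle D\Gamma_{s-1}(F),-DL^{-1}F\rangle_{\HH}$ and $D$ is linear, I would apply $(\ast)$ term by term with $p=m'$ and $g=g_{r_1,\dots,r_{s-1}}$ and then read off the data: the new chaos order is $m'+q-2r_s=(s+1)q-2r_1-\cdots-2r_s$; the new summation range is $1\leq r_s\leq m'\wedge q=[\,sq-2r_1-\cdots-2r_{s-1}\,]\wedge q$; the new kernel is $g_{r_1,\dots,r_{s-1}}\scont{r_s}f=g_{r_1,\dots,r_s}$; and the coefficient $c_q(r_1,\dots,r_{s-1})$ gets multiplied by $m'(r_s-1)!\binom{m'-1}{r_s-1}\binom{q-1}{r_s-1}$, which, since $m'=sq-2r_1-\cdots-2r_{s-1}$, is precisely the factor in the recursion \eqref{eq:RecursiveFormulaForc_q}. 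Thus the coefficients match automatically.

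The point that needs genuine care --- and what I expect to be the main obstacle --- is the bookkeeping of the \emph{degenerate} summands and the resulting chain of indicator functions. At each intermediate stage some iterated contractions have chaos order $0$, i.e.\ are constants, and the Malliavin derivative annihilates them, so such terms contribute nothing to the next Gamma operator. Quantitatively, a summand of $\Gamma_{s-1}(F)$ survives the passage to $\Gamma_s(F)$ exactly when its order $m'=sq-2r_1-\cdots-2r_{s-1}$ is $\geq 1$, i.e.\ when $r_1+\cdots+r_{s-1}<\tfrac{sq}{2}$. One should first record that the intermediate orders $m_k:=(k+1)q-2(r_1+\cdots+r_k)$ are always $\geq 0$ (this follows because in $(\ast)$ the index $r$ only runs up to $p\wedge q$, so $p+q-2r\geq|p-q|\geq 0$), and that $m_k\geq1\iff r_1+\cdots+r_k<\tfrac{(k+1)q}{2}$ for any $q$. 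Peeling this constraint off at each step $k=1,\dots,s-1$ produces exactly the product $\ind{r_1<q}\cdots\ind{r_1+\cdots+r_{s-1}<\frac{sq}{2}}$ of \eqref{eq:NewGammaOpProof1}; no restriction to even $q$ is needed here.

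A second, more routine subtlety is that the iterated contractions are not symmetric at the intermediate steps. Since $I_m(\cdot)$ only depends on the symmetrization of its argument and the bracket identity already outputs a symmetrized contraction, one may work with symmetrized kernels throughout, provided the statement reads $g\scont{r}f$ as the symmetrization of the corresponding tensor contraction; this is what \eqref{eq:NewGammaOpProof1} tacitly does. Once these two points are in place, the induction closes and yields both \eqref{eq:NewGammaOpProof1} and the recursion \eqref{eq:RecursiveFormulaForc_q}.
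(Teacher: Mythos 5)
Your proposal is correct and follows essentially the same route as the paper, which simply invokes induction on $s$ along the lines of the proof of Theorem 5.1 in \cite{CumOnTheWienerSpace}: the base case and inductive step via the identity $\langle DI_p(g),-DL^{-1}I_q(f)\rangle_{\HH}=p\sum_{r=1}^{p\wedge q}(r-1)!\binom{p-1}{r-1}\binom{q-1}{r-1}I_{p+q-2r}(g\scont{r}f)$ produce exactly the recursion \eqref{eq:RecursiveFormulaForc_q} with leading factor $sq-2r_1-\cdots-2r_{s-1}$ in place of the factor $q$ appearing for $\Gamma_{alt}$. Your careful accounting of the degenerate (order-zero) summands killed by $D$, which generates the chain of indicators, and of the symmetrization of the iterated contractions, correctly fills in the details the paper leaves implicit.
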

	\begin{proof} It follows by induction on $s$ and similar lines of arguments as in \cite[Proof of Theorem 5.1]{CumOnTheWienerSpace}. 
		
	\end{proof}
	
	
	\begin{proof}[Proof of Proposition \ref{Prop:RelationOldAndNewGamma}]
		Part \textit{(a)} is clear from the definition. Part \textit{(b)} for $j = 1$ is also trivial. For $j = 2$, we use the fact that $\Gamma_1 = \Gamma_{alt,1}$, as well as the integration by parts formula \eqref{eq:IntegrationByParts}, to get
		\begin{align*}
		\E \big[ \Gamma_2(F) \big] & = \E \big[ \sprod{D \Gamma_1(F), -D L^{-1} F}_{\HH} \big] = \E\big[ \Gamma_1(F) \, F \big] \\
		& = \E \big[F \, \Gamma_{alt,1}(F) \big] =  \E \big[ \sprod{D F, -D L^{-1} \Gamma_{alt,1}(F)}_{\HH} \big] = \E[\Gamma_{alt,2}(F)].
		\end{align*}
		For part \textit{(c)}, consider
		\begin{align*}
		\E \big[ \Gamma_3(F) \big] & = \E \big[ \sprod{D \Gamma_2(F), -D L^{-1} F}_{\HH} \big] = \E \big[ F \, \Gamma_2(F) \big] = \E \big[ F \, \sprod{D \Gamma_1(F), -D L^{-1} F}_{\HH} \big] \\
		& = \E \big[ \sprod{D \big(F \,\Gamma_1(F) \big), -D L^{-1} F}_{\HH} \big] - \E \big[ \Gamma_1(F) \sprod{D F, -D L^{-1} F}_{\HH} \big] \\
		& = \E \big[ \sprod{D \big(F \,\Gamma_1(F) \big), -D L^{-1} F}_{\HH} \big] - \E \big[ \Gamma_{alt,1}(F)^2 \big] \\
		& = \E \big[ F^2 \, \Gamma_{alt,1} \big] - \E \big[ \Gamma_{alt,1}(F)^2 \big] \\
		& = \E[F^2] \, \E\big[ \Gamma_{alt,1}(F) \big] + \E \big[ 2F \, \sprod{D F, -D L^{-1}  \Gamma_{alt,1}(F)}_{\HH}  \big] - \E \big[ \Gamma_{alt,1}(F)^2 \big] \\
		& = \E\big[ \Gamma_{alt,1}(F) \big]^2 + 2 \, \E \big[ F \, \Gamma_{2,alt} \big] - \E \big[ \Gamma_{alt,1}(F)^2 \big] \\
		& = - \Var\big(\Gamma_{alt,1}(F)\big) + 2 \, \E\big[ \Gamma_{alt,3}(F) \big].
		\end{align*}
		For part \textit{(d)}, we consider the representation of $\Gamma_{alt,s}$ given in equation (5.25) of \cite{CumOnTheWienerSpace}. The representation is exactly the same as for $\Gamma_s$ (Lemma \ref{lem:RepresentationOfGamma_j}), except for the recursive formula of the constants $c_q$. For $\Gamma_{alt,j}$ they are given by $c_{alt,q}(r) = c_q(r) = q (r-1)! \binom{q-1}{r-1}^2$, and for $s\ge2$,
		\[ c_{alt,q}(r_1, \ldots, r_s) = q \, (r_s-1)!  \, \binom{sq-2r_1- \cdots -  2r_{s-1} -1 }{r_s - 1} \binom{q-1}{r_s - 1} \,  c_q(r_1,  \cdots, r_{s-1}). \]
		Comparing this with our formula \eqref{eq:RecursiveFormulaForc_q}, we see that only the first factor is different, namely $q$ instead of $(sq-2r_1-\ldots - 2r_{s-1})$. But now for $q=2$, the indicator $\ind{r_1 + \cdots + r_{s-1} < \frac{sq}{2}}$ dictates that $r_1 = \ldots = r_{s-1} = 1$. Hence $q = 2 = 2s-2r_1 - \ldots - 2 r_{s-1}$. Therefore, the two notions of Gamma operators coincide when $q=2$.
	\end{proof}

\section*{Acknowledgments}
The authors would like to thank Simon Campese for pointing out a mistake in the proof of Theorem \ref{thm:MainMalliavinSteinBound}.

\end{document}